\newtheorem{thm}{Theorem}[section]
  \newtheorem{lem}{Lemma}[section]
 \newtheorem{prop}{Proposition}[section]
 \theoremstyle{definition}
  \newtheorem*{ack}{Acknowledgments}
 \theoremstyle{remark}
\newtheorem{rem}{Remark}[section]
 \numberwithin{equation}{section}
\newcommand{\f}{\left(}
\renewcommand{\r}{\right)}
\newcommand{\N}{\mathbb{N}}
\newcommand{\R}{\mathbb{R}}
\renewcommand{\H}{\mathbb{H}}
\renewcommand{\a}{\alpha}
\renewcommand{\b}{\beta}
\newcommand{\g}{\varphi}
\renewcommand{\d}{\delta}
\renewcommand{\k}{\kappa}
\newcommand{\s}{\sigma}
\newcommand{\metric}[2]{\ensuremath{\langle #1, #2\rangle}}  
\begin{document}
\title[Curvature estimates for semi-convex solutions]{Curvature estimates for semi-convex solutions of asymptotic Plateau problem in $\mathbb{H}^{n+1}$}

\author{Han Hong}
\address{Department of Mathematics and Statistics, Beijing Jiaotong University, Beijing 100044, P. R. China}
\email{\href{mailto:}{hanhong@bjtu.edu.cn}}

\author{RUIJIA ZHANG}
\address{Key Laboratory of Pure and Applied Mathematics School of Mathematical Sciences, Peking University, Beijing 100871, P. R. China}
\email{\href{mailto:zhangrj17@mails.tsinghua.edu.cn}{zhangrj@pku.edu.cn}}

\keywords{Semi-convex, curvature estimates, hyperbolic space}
\subjclass[2010]{35K55; 53E10}


\begin{abstract}
In this paper, we consider the asymptotic $\sigma_k$ Plateau problem in hyperbolic space. We establish $C^2$ estimates for semi-convex complete hypersurfaces  satisfying constant $\s_k$ curvature with a prescribed asymptotic boundary at the infinity for $2\leq k\leq n-2$ . The result is based on a new crucial concavity inequality derived for hessian equations.
\end{abstract}

\maketitle

\section{introduction}\label{sec:1}

Let 
\[\mathbb{H}^{n+1}=\{(z,z_{n+1})\in\mathbb{R}^{n+1}:\ z_{n+1}>0,z\in\mathbb{R}^n\}\]
with metric
\[g^{\mathbb{H}^{n+1}}=\frac{1}{z_{n+1}^2}g^{\mathbb{R}^n}\]
be the $n+1$ dimensional hyperbolic space, i.e., the half space model. Let $\partial_\infty \mathbb{H}^{n+1}$ denote the ideal boundary of $\mathbb{H}^{n+1}$ at infinity, which can be identified with $\mathbb{R}^n\times \{0\}$. We assume that $\Gamma$ is a closed $(n-1)$-dimensional embedded hypersurface in $\partial_\infty\mathbb{H}^{n+1}$, which can be viewed as the boundary of a smooth domain in $\mathbb{R}^n$. 

The asymptotic Plateau problem for $\sigma_k$ aims to find a complete hypersurface $\Sigma$ in $\mathbb{H}^{n+1}$ satisfying the following conditions:
\begin{equation}\label{equation1introduction}
\k[\Sigma]\in \Gamma_k\ \ \text{and}\ \ P_k(\kappa[\Sigma])=\sigma\in (0,1)
\end{equation}
with
\begin{equation}\label{boundaryconditionintroduction}
\partial\Sigma=\Gamma.
\end{equation}
Here, $\kappa[\Sigma]=(\kappa_1,\cdots,\kappa_n)$ denotes the hyperbolic principal curvatures of $\Sigma$. Then $P_k=\sigma_k/C_n^k$ is the $k$-th normalized elementary symmetric polynomial and $\Gamma_k$ denotes the $k$-th Garding cone, defined as
\[\Gamma_k:=\{\kappa\in\mathbb{R}^n:\ \sigma_k>0, \forall \ 1\leq j\leq k\}.\]

When $k=1$ or $n,$ this problem has been solved in \cite{GS1} and \cite{spruckgausscurvature}. When $k=n-1$, this problem has been solved by S. Lu \cite{Lu1}. The general case when $2\leq k\leq n-2$ is still open. If the hypersurface is assumed to be strictly (hyperbolic) convex, we can refer to \cites{guansurvey,GS2,guan2009jga,xiaoling2014jdg} by Guan-Spruck-Xiao for positive solutions and uniqueness results.

To solve the problem \eqref{equation1introduction}-\eqref{boundaryconditionintroduction}, one needs to find solutions to a fully nonlinear PDE with certain degeneracy. The idea is to use the method of continuity and to approximate the solution on compact subset of the domain bounded by $\Gamma$. The $C^0$ and $ C^1$ estimates are obtained by maximum principles with the help of geometric barriers and this has well-established for all $k$ in previous works \cites{guansurvey,GS2,guan2009jga,xiaoling2014jdg}. The curvature estimate can also be proven using a test function and the bound for principal curvatures depends on the lower bound of $u$. However, $u$ vanishes on the boundary $\Gamma$ and thus it prevents us applying diagonal argument to get the limit. The aim of this paper is to provide a curvature estimate for this problem that is uniformly independent of the lower bound of $u$ under semi-convex condition by using a new concavity inequality.

We say a hypersurface $M$ semi-convex if there exists a constant $A>0$ such that
\begin{align*}
\kappa_i(X)\geq -A,\quad 1\leq i\leq n, \quad \forall\ X\in M.
\end{align*}

We now state our main theorem.
\begin{thm}\label{main}
Let $\Gamma=\partial\Omega\times \{0\}$ has nonnegative (Euclidean) mean curvature, where $\Omega$ is a bounded smooth domain in $\mathbb{R}^n$ for $n\geq 4$. Suppose that $\s\in (0,1)$. For $2\leq k\leq n-2$, let $\Sigma$  be a complete semi-convex  hypersurface in $\mathbb{H}^{n+1}$ satisfying
\begin{align*}
P_{k}(\kappa)=\sigma,\quad \partial\Sigma=\Gamma
\end{align*}
with $\k\in \Gamma_k$.
Then we have 
\begin{align*}
    \max_{x\in \Sigma;i=1,2,...,n}|\k_i|\leq C
\end{align*}
where $C$ depends only on $n$, $k$, $\s$, $\Omega$ and $A$.
\end{thm}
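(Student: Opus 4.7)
Since the semi-convexity hypothesis already gives the lower bound $\kappa_i\geq -A$, it suffices to establish an upper bound on $\kappa_{\max}$. The plan is to run a maximum-principle argument on a test function of the form
$$W=\log\kappa_{\max}+\phi(u)+\beta\eta,$$
where $u=z_{n+1}$ is the hyperbolic height function in the half-space model, $\eta$ is an auxiliary geometric quantity (for instance the vertical component $\nu^{n+1}$ of the Euclidean unit normal), $\phi$ is a suitable positive increasing function, and $\beta$ is a large constant. The terms $\phi(u)+\beta\eta$ are arranged so that, on $\partial\Sigma=\Gamma$ and at infinity, $W$ is controlled by the boundary and asymptotic barriers already constructed in \cite{GS2, xiaoling2014jdg}; consequently $W$ attains its maximum at an interior point $X_0\in\Sigma$.

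\textbf{Computation at the maximum.} At $X_0$, in a local orthonormal frame diagonalizing $(h_{ij})$ with $\kappa_1=\kappa_{\max}$, the criticality condition $\nabla W=0$ gives $h_{11;i}/\kappa_1=-\phi'(u)u_i-\beta\eta_i$. Applying the linearized operator $\sum F^{ii}\nabla_i\nabla_i$ (where $F^{ij}=\partial P_k/\partial h_{ij}$) to $W$, combining the commutator identity in $\mathbb{H}^{n+1}$ with the twice-differentiated equation $\sum F^{ii}h_{ii;11}=-F^{ij,pq}h_{ij;1}h_{pq;1}$, and using the standard expansion of $\nabla^2\log\kappa_1$ in terms of $h_{11;ii}$ and the off-diagonal derivatives $h_{1p;i}$, one arrives at a schematic inequality
$$0\;\geq\;\frac{1}{\kappa_1}\!\left[-F^{ij,pq}h_{ij;1}h_{pq;1}+2\sum_{i,\,p\neq 1}\frac{F^{ii}h_{1p;i}^{\,2}}{\kappa_1-\kappa_p}\right]-\frac{\sum_i F^{ii}h_{11;i}^{\,2}}{\kappa_1^{\,2}}+\sum_i F^{ii}(\phi''u_i^{\,2}+\phi'u_{ii})+\text{l.o.t.},$$
where the lower-order terms comprise constant-curvature contributions from $\mathbb{H}^{n+1}$, controllable by $A$ and $\sigma$.

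\textbf{The new concavity inequality and main obstacle.} The decisive step — and the main obstacle of the argument — is to bound the bracketed third-order expression above from below. For strictly hyperbolically convex hypersurfaces this is done using the classical concavity of $P_k^{1/k}$ on $\Gamma_k$, as exploited in \cite{GS2, guan2009jga, xiaoling2014jdg}; but under only semi-convexity those inequalities degenerate when some $\kappa_p$ is very negative, since the denominators $\kappa_1-\kappa_p$ and the differences $F^{11}-F^{pp}$ behave badly. The new concavity inequality promised in the abstract is designed precisely for the regime $\{\kappa\in\Gamma_k:\kappa_i\geq -A\}$: it should provide a lower bound for the third-order terms of the form $c(n,k,\sigma)\kappa_1\sum_i F^{ii}-C(A,\sigma)\sum_i F^{ii}(|\nabla u|^{\,2}+1)$, using the shift $\kappa+A\mathbf{1}\geq 0$ to tame the contribution of the negative eigenvalues. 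Combining this with the Newton--MacLaurin bound $\sum_i F^{ii}\geq c(n,k,\sigma)>0$ and choosing $\phi$, $\beta$ to dominate the remaining zeroth-order terms, the inequality at $X_0$ forces $\kappa_1\leq C(n,k,\sigma,\Omega,A)$. Thus the proof of the theorem reduces to establishing this new concavity inequality, which is where the semi-convexity assumption becomes truly essential and where the estimate becomes independent of $\inf_\Sigma u$; the remainder is a careful but standard execution of the maximum principle in $\mathbb{H}^{n+1}$.
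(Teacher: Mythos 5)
Your proposal identifies the correct broad structure (a logarithmic test function, the $C^2$ maximum principle, and the need for a new concavity inequality to absorb third-order derivatives), but it diverges from the paper in ways that create genuine gaps.

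First, the test function. The paper uses $Q=\ln\kappa_1-L\ln\nu^{n+1}$, with no $\phi(u)$ term at all. This is not incidental: the entire point of the theorem (explained in the introduction) is to produce a curvature bound that is independent of $\inf_\Sigma u$, because $u$ vanishes on $\Gamma$. Including $\phi(u)$ in $W$, unless $\phi$ is bounded and arranged very carefully, reintroduces exactly the dependence one is trying to kill. The term $\nu^{n+1}$ alone suffices because Lemma~\ref{bnu} gives $\nu^{n+1}\geq a_1>0$ uniformly, so $-L\ln\nu^{n+1}$ is bounded above without any reference to $u$. Moreover, the paper also needs the perturbation device (replacing $\kappa_1$ by a quantity smooth in the frame) to handle possible multiplicity of $\kappa_1$, which your sketch omits.

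Second, and more seriously, the ``new concavity inequality'' you posit is not the one the paper proves, and I do not see how it could work in the form you state. You propose a lower bound of the type $c\,\kappa_1\sum F^{ii}-C(A,\sigma)\sum F^{ii}(|\nabla u|^2+1)$, derived from the shift $\kappa+A\mathbf{1}\geq 0$. But the left-hand side of the concavity estimate consists of quadratic forms in the \emph{third-order} derivatives $h_{ij;1}$, so any lower bound must also be quadratic in those derivatives (and the paper's is: see equation~\eqref{claim}, which is a quadratic form in $(h_{111},\dots,h_{nn1})$). A bound by zeroth-order quantities like $\kappa_1\sum F^{ii}$ is dimensionally the wrong kind of statement and would not allow you to close the argument. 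What the paper actually does is quite different: it exploits the Huisken--Sinestrari inductive formula for $-\partial^2_\xi q_k$ (Lemma~\ref{HS}, Lemma~\ref{HS1}, Lemma~\ref{lem,HG}), which lower-bounds $-\sigma_{k-1}\partial^2_\lambda q_k$ by $C(n,k)\sum_j\kappa_1\cdots\hat\kappa_j\cdots\kappa_{k-1}\,|[\lambda]^{\perp}_{1,\dots,\hat j,\dots,k-1}|^2$. This gives a \emph{positive quadratic form in the derivative vector}, measured against projections of $\lambda$ orthogonal to truncations of $\kappa$, which is exactly the structure needed to beat the $-F^{11}h_{111}^2/\kappa_1^2$ loss term. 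No shift by $A\mathbf{1}$ appears; semi-convexity enters only through Lemma~\ref{lemn}--\ref{upperboundforkappak} to control $\kappa_k$ and $|\kappa_n|$.

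Third, the case analysis is indispensable and missing. The paper splits into Case~1 ($\sum F^{ii}$ dominates $\sum F^{ii}\kappa_i^2$, trivially closable), Case~2(a) ($|\kappa_n|<\delta$, handled by Lu's concavity lemma~\ref{lu}), and Case~2(b) ($\delta_0<|\kappa_n|<A$). Case~2(b) is then subdivided by Proposition~\ref{l,N} according to the index $\ell$ where the ordered eigenvalues drop from $\geq N_2^2$ to $\leq N_2$, yielding Assumptions~1, 2, 3 with different estimates (in particular, the ``Main claim''~\eqref{claim} is proved separately in each subcase after an intricate decomposition $\xi=(1+a)\frac{h_{111}}{\kappa_1}\kappa-ah_{111}e_1+\frac{b\beta}{|\beta|^2}+\lambda$). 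Without this case dissection, you have no way to decide when Lu's lemma applies versus when the Huisken--Sinestrari inequality is needed, and the constants cannot be closed. In short: the decisive new ingredient is Lemma~\ref{lem,HG}, the decomposition of the critical gradient vector, and the case analysis driven by Proposition~\ref{l,N} — none of which your sketch provides, and the shift idea you propose does not substitute for them.
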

\begin{rem}
    After completing this work, we found an interesting paper of Bin Wang \cite{Wang}, where
he completely resolved this problem for the cases when $k=2$ and $k=n-2$. Besides, he achieved the same curvature estimates under the assumption $\s_{k+1}>-A$ as stated in Theorem 1.9 of \cite{Wang}, and posed the question whether the results remain valid under the semi-convexity condition. We provide a positive answer to this question in Theorem \ref{main}.  
\end{rem}
The main difficulty of overcoming the curvature estimates is dealing with the third-order derivatives. The usual method is to use the concavity of $q_k$, or, more progressively, to use the concavity of $\f\frac{\s_k}{\s_l}\r^{\frac{1}{k-l}}=(q_k\cdots q_{l+1})^{\frac{1}{k-l}}$ and to throw out these terms directly. To prove this theorem, we derive a crucial concavity inequality based on Huisken-Sinestrari's observation \cite{HS} (see Lemma  \ref{sigma_k-1hessianqk}). According to \cite{HS}, when $\k\in \Gamma_k$, the concavity of $q_k$ can be deduced from the concavity of $q_{k-1}$ by induction. In particular, $-\s_1\partial^2_{\xi} q_2$ can be bounded below by the length of $\xi^{\perp}$ with respect to the vector $\k$. We observe that, by induction, the quantity of $-\partial^2_{\xi} q_k$ can be controlled by $\xi$. This observation plays a crucial role in controlling the third-order derivatives. 

This paper is organized as follows. In section \ref{sec:2}, we collect some known facts of asymptotic Plateau problem in $\mathbb{H}^{n+1}$, collect and prove some properties of the k-th elementary symmetric functions. In section \ref{sec:3}, we complete the proof of Theorem \ref{main}, which will be divided into several cases.
\begin{ack}
The authors would like to thank Prof. Haizhong Li for discussions and for the consistent encouragement provided. The second author is also indebted to Prof. Yuguang Shi for the enduring help and inspiration. The first author is supported by the Talent Fund of Beijing Jiaotong University No. 2024XKRC008. The second author is supported by the National Key R$\&$D Program of China 2020YFA0712800 and the China Postdoctoral Science Foundation No.2023M740108.
\end{ack}

\section{Preliminaries}\label{sec:2}
\subsection{Hypersurfaces in hyperbolic space}

In this section, we recall and collect some formulas of hypersurfaces in hyperbolic space. Let $\Sigma$ be a connected, oriented, complete hypersurface in $\mathbb{H}^{n+1}$ with asymptotic boundary at infinity. We will use the half-space model of $\mathbb{H}^{n+1}$. 

Let $X$ and $\nu$ be the position and the outward unit normal  of $\Sigma$  in $\mathbb{R}^{n+1}$ and $\mathrm{n}$ be the outer unit normal of $\Sigma$ in $\mathbb{H}^{n+1}$.
We denote by $u$ the height function of $\Sigma$ in $\mathbb{R}^{n+1}$, $e$ the unit vector field in the $x_{n+1}$ direction in $\mathbb{R}^{n+1}$ and $\cdot$ the Euclidean inner product. Then 
\begin{align*}
    u=X\cdot e.
\end{align*}
Let
$
    \nu^{n+1}=\nu\cdot e.
$
Take $\lbrace x_1,\cdots,x_n\rbrace$ to be a local normal coordinate system on $\Sigma$ around a point $p$. We denote by $g_{ij}=g(X_i,X_j)$ the induced metric and $h_{ij}=-g(\nabla_{X_i}X_j,\mathrm{n})$ the second fundamental form of $\Sigma$ in $\mathbb{H}^{n+1}$, $\tilde{g}_{ij}=g(X_i,X_j)$ the induced metric and $\tilde{h}_{ij}=-g(\tilde{\nabla}_{X_i}X_j,\nu)$ the second fundamental form of $\Sigma$ in $\mathbb{R}^{n+1}$, where $\nabla$ and $\tilde{\nabla}$ denote the Levi-Civita connection of $\H^{n+1}$ and $\R^{n+1}$ respectively. We define $f_{ij}=\nabla_{ij} f$ and $f_{ijk}=\nabla_{ijk} f$ for any $f\in C^4(\Sigma)$ below.

The Weingarten matrix is regarded as $\mathcal{W}=\lbrace h_i{}^j\rbrace=\lbrace h_{ik}g^{kj}\rbrace$, where $\lbrace g^{ij}\rbrace$ is the inverse matrix of $\lbrace g_{ij}\rbrace$. The principal curvatures $\k=(\k_1,\cdots,\k_n)$ of $M^n$ are eigenvalues of $\mathcal{W}$. Let $f(\k)$ be a symmetric function of the principal curvatures $\k = (\k_1, \k_2, \cdots, \k_n)$. There exists a function $\mathcal{F}(\mathcal{W})$ defined on the Weingarten matrix, such that $F (\mathcal{W})=f(\k)$.
Since $h_i{}^j = \sum_k h_{ik} g^{kj}$, $\mathcal{F}$ can be viewed as a function $\hat{\mathcal{F}}(h_{ij}, g_{ij})$ defined on the second fundamental form $\lbrace h_{ij}\rbrace$ and the metric $\lbrace g_{ij}\rbrace$. In the subsequent article, we denote
\begin{align*}
\dot{ \mathcal{F} }^{pq}(\mathcal{W}): =&\frac{\partial \hat{\mathcal{F}}}{\partial h_{pq}}(h_{ij}, g_{ij})=g^{pl}\frac{\partial \mathcal{F}}{\partial h_q^l },\quad
\ddot{\mathcal{F}}^{pq,rs}(\mathcal{W}): =\frac{\partial ^2 \hat{\mathcal{F}} }{\partial h_{pq}\partial h_{rs}}(h_{ij}, g_{ij})=g^{lp}g^{mr}\frac{\partial^2 \mathcal{F}}{\partial h_q^l\partial h_s^m}.
\end{align*}
Suppose that $\lbrace h_{ij} \rbrace$ is diagonal by choosing the appropriate orthonormal basis at one point. Then we define
\begin{align*}
\dot{f}^{p}: =\frac{\partial f}{\partial\k_p}(\k)=\dot{\mathcal{F}}^{pp}(\mathcal{A}),\quad\ddot{f}^{pq}: =\frac{\partial^2 f}{\partial\k_p\partial\k_q}(\k)=\ddot{\mathcal{F}}^{pp,qq}(\mathcal{A}).
\end{align*}
We collect three Lemmas corresponding to the geometric quantities of $\Sigma$ in $\R^{n+1}$ and $\H^{n+1}$ respectively, which can be found in \cite{GS2}, \cite{guansurvey} and \cite{xiaoling2014jdg}.
\begin{lem}
The relation between metric tensor is
    \begin{align}
        g_{ij}=\frac{\tilde{g}_{ij}}{u^2}.
    \end{align}
    The relation between Christoffel symbols is
\begin{align}\label{gamma}
\Gamma^k_{ij}=\tilde{\Gamma}_{ij}^k-\frac{1}{u}(u_i\delta_{kj}+u_j\delta_{ki} -\tilde{g}^{kl}u_l  \tilde{g}^{ij} ).
 \end{align}
\end{lem}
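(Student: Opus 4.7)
\emph{Proof proposal.} The two identities are standard consequences of the conformal relation between the Euclidean and hyperbolic ambient metrics, so the plan is purely computational.

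For the metric identity I would simply restrict the conformal relation $g^{\H^{n+1}} = z_{n+1}^{-2}\,g^{\R^{n+1}}$ to pairs of tangent vectors $X_i, X_j$ on $\Sigma$. Along $\Sigma$ the $(n{+}1)$-st coordinate of the position vector equals the height function $u = X\cdot e$, so
\begin{equation*}
g_{ij} = g^{\H^{n+1}}(X_i, X_j) = u^{-2}\,g^{\R^{n+1}}(X_i, X_j) = \tilde g_{ij}/u^2.
\end{equation*}
Inverting this yields $g^{kl} = u^{2}\tilde g^{kl}$, which I will feed into the next step.

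For the Christoffel identity I would substitute $g_{ij} = u^{-2}\tilde g_{ij}$ and $g^{kl} = u^{2}\tilde g^{kl}$ into the standard coordinate formula $\Gamma^k_{ij} = \tfrac{1}{2}\,g^{kl}(\partial_i g_{jl} + \partial_j g_{il} - \partial_l g_{ij})$ and expand by the Leibniz rule. The result splits into two packets: in the packet where the factor $u^{-2}$ is left alone, the $u^2$ from $g^{kl}$ cancels the $u^{-2}$ and one recovers exactly $\tilde\Gamma^k_{ij}$; in the packet where $u^{-2}$ is differentiated, each derivative contributes a $-2u^{-3}u_\bullet$-type term, and after multiplication by $\tfrac{1}{2}\,u^{2}\tilde g^{kl}$ the three surviving terms collapse (using $\tilde g^{kl}\tilde g_{jl} = \delta^k_j$ and $\tilde g^{kl}\tilde g_{il} = \delta^k_i$) to $-u^{-1}\bigl(u_i\delta^k_j + u_j\delta^k_i - \tilde g^{kl}u_l\,\tilde g_{ij}\bigr)$. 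Summing the two packets gives the claimed identity. Equivalently, one may invoke the general conformal-change formula for Levi--Civita connections, $\Gamma^k_{ij} = \tilde\Gamma^k_{ij} + \delta^k_i\phi_j + \delta^k_j\phi_i - \tilde g^{kl}\phi_l\,\tilde g_{ij}$, and substitute $\phi = -\log u$ so that $\phi_i = -u_i/u$.

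There is essentially no genuine obstacle, since both identities are algebraic consequences of the conformal relation. The only thing to watch is careful index bookkeeping, in particular distinguishing $\tilde g_{ij}$ (lower) from $\tilde g^{ij}$ (upper) in the last term on the right-hand side, and correctly pairing $\tilde g^{kl}\tilde g_{jl}$ to produce a Kronecker delta. The grouping described above makes the cancellations transparent in a single pass, and the resulting formula is what is needed later to convert between $\nabla$- and $\tilde\nabla$-derivatives when computing the evolution of geometric quantities on $\Sigma$.
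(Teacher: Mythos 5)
Your proposal is correct: the paper itself gives no proof of this lemma (it is quoted from \cite{GS2}, \cite{guansurvey}, \cite{xiaoling2014jdg}), and your computation is exactly the standard conformal-change argument that underlies it, i.e.\ restricting $g^{\H^{n+1}}=u^{-2}g^{\R^{n+1}}$ to $T\Sigma$ and applying the formula $\Gamma^k_{ij}=\tilde\Gamma^k_{ij}+\delta^k_i\phi_j+\delta^k_j\phi_i-\tilde g^{kl}\phi_l\tilde g_{ij}$ with $\phi=-\log u$. Note that your final term correctly carries $\tilde g_{ij}$ with lower indices; the $\tilde g^{ij}$ appearing in the paper's display \eqref{gamma} is evidently a typo, as confirmed by the way the formula is used in the subsequent computation of $\nabla_{ij}u$.
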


\begin{lem}
    \begin{align}
        1-|\tilde{\nabla} u|_{\tilde{g}}^2=(\nu^{n+1})^2,\quad 
        \tilde{\nabla}_{ij}u=-\tilde{h}_{ij}\nu^{n+1}.
    \end{align}
    \begin{align}
        \nabla_{ij}u=-\tilde{h}_{ij}\nu^{n+1}+2\frac{u_iu_j}{u}-\frac{u_k^2}{u}\delta_{ij}.
    \end{align}
\end{lem}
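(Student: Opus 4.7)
The plan is to derive all three identities from a single orthogonal decomposition of the ambient constant vector field $e$ along $\Sigma$, followed by a Christoffel-symbol calculation using \eqref{gamma} for the hyperbolic Hessian.

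First, since $u = X\cdot e$, any tangent vector $Y$ to $\Sigma$ satisfies $Y(u) = Y\cdot e$, so $\tilde\nabla u$, characterised by $\tilde g(\tilde\nabla u, Y) = Y(u)$, is exactly the $\tilde g$-tangential part of $e$ along $\Sigma$. The $\R^{n+1}$-normal part is $(e\cdot\nu)\nu = \nu^{n+1}\nu$, giving the orthogonal decomposition
\[
e = \tilde\nabla u + \nu^{n+1}\nu.
\]
Taking the Euclidean squared length of both sides yields $1 = |\tilde\nabla u|^2_{\tilde g} + (\nu^{n+1})^2$, the first claim. For the second, differentiate $u_j = X_j\cdot e$ tangentially; because $e$ is $\R^{n+1}$-parallel,
\[
\tilde\nabla_{ij}u = X_i(X_j\cdot e) - (\tilde\nabla^{\Sigma}_{X_i}X_j)\cdot e = \bigl(\tilde\nabla^{\R^{n+1}}_{X_i}X_j - \tilde\nabla^{\Sigma}_{X_i}X_j\bigr)\cdot e = -\tilde h_{ij}\,\nu\cdot e = -\tilde h_{ij}\nu^{n+1},
\]
using the Gauss formula in $\R^{n+1}$ together with the sign convention $\tilde h_{ij}=-g(\tilde\nabla_{X_i}X_j,\nu)$.

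For the third identity I would compare the two intrinsic Hessians on $\Sigma$. In any local chart,
\[
\nabla_{ij}u = u_{ij} - \Gamma^k_{ij}u_k, \qquad \tilde\nabla_{ij}u = u_{ij} - \tilde\Gamma^k_{ij}u_k,
\]
so $\nabla_{ij}u = \tilde\nabla_{ij}u + (\tilde\Gamma^k_{ij} - \Gamma^k_{ij})u_k$. Plugging in \eqref{gamma} and contracting with $u_k$ produces
\[
(\tilde\Gamma^k_{ij} - \Gamma^k_{ij})u_k = \frac{1}{u}\bigl(2u_iu_j - |\tilde\nabla u|^2_{\tilde g}\,\tilde g_{ij}\bigr).
\]
Combining with the second identity and evaluating at a point in a $\tilde g$-orthonormal frame, so that $\tilde g_{ij}=\delta_{ij}$ and $|\tilde\nabla u|^2_{\tilde g}=\sum_k u_k^2$, yields exactly the stated formula $\nabla_{ij}u = -\tilde h_{ij}\nu^{n+1} + 2u_iu_j/u - (u_k^2/u)\delta_{ij}$.

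There is no essential obstacle here: the lemma is a routine consequence of the defining relation $u = X\cdot e$, the Euclidean Gauss formula, and the already-derived relation \eqref{gamma} between the two Levi-Civita connections on $\Sigma$. The only points requiring care are keeping extrinsic and intrinsic covariant derivatives distinct when deriving the second identity, and passing to a $\tilde g$-orthonormal frame at the very end so that the contraction $\tilde g^{kl}u_ku_l\,\tilde g_{ij}$ collapses to $u_k^2\,\delta_{ij}$.
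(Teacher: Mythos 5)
Your proposal is correct and follows essentially the same route as the paper: the third identity is obtained by comparing the two intrinsic Hessians through the Christoffel-symbol relation \eqref{gamma} and then substituting $\tilde{\nabla}_{ij}u=-\tilde{h}_{ij}\nu^{n+1}$, which is exactly the paper's computation (the paper works implicitly in a frame where the metric components are $\delta_{ij}$, a point you make explicit). The only difference is that you also supply proofs of the first two identities, via the decomposition $e=\tilde{\nabla}u+\nu^{n+1}\nu$ and the Euclidean Gauss formula, which the paper simply quotes as standard facts.
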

\begin{proof}
At a point $p$, we have
    \begin{align}
        \nabla_{ij}u=&\tilde{\nabla}_{ij} u+\tilde{\Gamma}_{ij}{}^ku_k\nonumber\\
        =&\tilde{\nabla}_{ij} u+2\frac{u_iu_j}{u}-\frac{u_k^2}{u}\delta_{ij}\nonumber\\
        =&-\tilde{h}_{ij}\nu^{n+1}+2\frac{u_iu_j}{u}-\frac{u_k^2}{u}\delta_{ij}.
    \end{align}
\end{proof}
\begin{lem}
The relation between the second fundamental form is
\begin{align}
      h_{ij}=-\frac{\tilde{h}_{ij}}{u}+\frac{\nu^{n+1}}{u^2}\tilde{g}_{ij}.
\end{align}
The relation between the Weingarten matrix and principal curvatures are
 \begin{align}\label{hik}
     h_i{}^k=-u\tilde{h}_i{}^k+\nu^{n+1}\delta_i{}^k,\quad \k_i=u\tilde{\k}_i+\nu^{n+1}, \quad i=1,\cdots,n,
 \end{align}
\end{lem}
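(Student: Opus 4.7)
The plan is to use the conformal identity $g=u^{-2}\tilde g$ between the hyperbolic and Euclidean ambient metrics, together with the conformal change of the Levi-Civita connection, to derive the three stated formulas in sequence.

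First I would pin down the relation between the unit normals. Since $\nu$ is a $\tilde g$-unit vector normal to $T\Sigma$ and conformality preserves orthogonality, $\nu$ remains $g$-orthogonal to $\Sigma$; the $g$-unit condition $g(\mathrm{n},\mathrm{n})=u^{-2}\tilde g(\mathrm{n},\mathrm{n})=1$ then forces $\mathrm{n}=\pm u\nu$, with the sign fixed by the common outward convention.

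Next I would apply the conformal change formula for the ambient Levi-Civita connection. Writing $g=e^{2\phi}\tilde g$ with $\phi=-\log u$, Koszul's formula yields
\[
\nabla_X Y=\tilde\nabla_X Y+X(\phi)Y+Y(\phi)X-\tilde g(X,Y)\,\tilde\nabla\phi,
\]
where $\tilde\nabla$ is the Euclidean (ambient) connection, and $\tilde\nabla\phi=-e/u$ since $\phi=-\log x_{n+1}$ on $\mathbb R^{n+1}$. Specializing to $X=X_i$, $Y=X_j$ and taking $-g(\cdot,\mathrm{n})$ of both sides, the two middle terms vanish (as $X_i,X_j$ are tangent to $\Sigma$ while $\mathrm{n}$ is normal). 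Using $\mathrm{n}=\pm u\nu$, the definition of $\tilde h_{ij}$, and the identity $\tilde g(e,\nu)=\nu^{n+1}$, the surviving two terms reduce to
\[
h_{ij}=-\tfrac{\tilde h_{ij}}{u}+\tfrac{\nu^{n+1}}{u^2}\tilde g_{ij},
\]
which is the first asserted formula.

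The remaining two identities are now algebraic consequences. Raising an index with $g^{jk}=u^2\tilde g^{jk}$ gives
\[
h_i{}^k=h_{ij}g^{jk}=u^2\tilde g^{jk}\Bigl(-\tfrac{\tilde h_{ij}}{u}+\tfrac{\nu^{n+1}}{u^2}\tilde g_{ij}\Bigr)=-u\,\tilde h_i{}^k+\nu^{n+1}\delta_i{}^k,
\]
which is \eqref{hik}. At any point I would then pick a $\tilde g$-orthonormal frame in which $\tilde h_i{}^k$ is diagonal; since $\nu^{n+1}\delta_i{}^k$ is a scalar multiple of the identity, $h_i{}^k$ is simultaneously diagonalized, and reading off eigenvalues produces $\kappa_i=u\tilde\kappa_i+\nu^{n+1}$ (with the paper's convention for $\tilde\kappa_i$). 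The whole argument is a direct calculation with no real obstacle; the only point requiring care is tracking the sign conventions of $\mathrm{n}$ relative to $\nu$ and of $\tilde h_{ij},\tilde\kappa_i$ so that the resulting factors match the statement verbatim.
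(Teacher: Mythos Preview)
Your argument is correct: the conformal-change formula for the Levi--Civita connection, together with $\mathrm{n}=\pm u\nu$ and $g^{jk}=u^2\tilde g^{jk}$, yields all three identities exactly as you describe, with only the sign of $\mathrm{n}$ versus $\nu$ (and correspondingly the convention for $\tilde\kappa_i$) requiring care. The paper itself gives no proof of this lemma---it is stated as a fact imported from \cite{GS2}, \cite{guansurvey}, and \cite{xiaoling2014jdg}---so your computation supplies what the paper omits.
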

Then we collect two Lemmas about $\nu_{n+1}$ referring to \cite{GS2} and \cite{Lu1}.
\begin{lem}\label{bnu}
For $n\geq 4$, let $\Sigma$ be a graph satisfying \eqref{equation1introduction} over a domain $\Omega$ with nonnegative (Euclidean) mean curvature boundary. Then
\begin{align*}
1\geq\nu^{n+1}> a_1>0
\end{align*}
 for some positive constant $a_1$ depending only on $n,\Omega$ and $\sigma$.
\end{lem}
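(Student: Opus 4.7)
The upper bound $\nu^{n+1}\le 1$ is immediate from $|\nu|=1$. Since $\Sigma$ is the graph of a function $u\colon\Omega\to(0,\infty)$ whose height vanishes at the asymptotic boundary $\partial\Omega$, one has $\nu^{n+1}=(1+|Du|^2)^{-1/2}$, so the lower bound is equivalent to a global gradient estimate $|Du|\le C(n,\Omega,\sigma)$, uniform up to $\partial\Omega$. My plan is to split the argument into a boundary estimate via a geometric barrier and an interior estimate via the maximum principle.

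For the \emph{boundary barrier}, I would use the explicit rotationally symmetric solutions of $P_k(\kappa)=\sigma$: upper hemispheres of Euclidean balls $B_R(y_0)$ with center $y_0\in\mathbb{R}^n\times\{-b\}$ and $b=R\sigma^{1/k}$ have constant hyperbolic principal curvatures $\kappa_i=\sigma^{1/k}$, hence constant $P_k=\sigma$, and meet $\partial_\infty\mathbb{H}^{n+1}$ at an angle $\arccos(\sigma^{1/k})$ strictly less than $\pi/2$ (because $\sigma<1$). Thus their Euclidean slope at the base circle is an explicit constant $\lambda(\sigma,k)=\sqrt{1-\sigma^{2/k}}/\sigma^{1/k}$. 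For each $p\in\partial\Omega$ I would place such a cap with its base circle tangent to $\partial\Omega$ at $p$ from outside and containing $\Omega$ in its vertical projection; the nonnegative Euclidean mean curvature of $\partial\Omega$ (together with $n\ge 4$ and $2\le k\le n-2$) is what permits this placement at every boundary point while keeping the cap admissible, i.e., with $\kappa\in\Gamma_k$. Writing $w_p$ for the cap function, the comparison principle for the elliptic operator $P_k$ yields $u\le w_p$ on $\Omega$, and since $u(p)=w_p(p)=0$ we conclude $|Du(p)|\le\lambda(\sigma,k)$.

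For the \emph{interior estimate}, I would apply the linearized operator $L=\dot{\mathcal{F}}^{ij}\nabla_{ij}$ to $\varphi=-\log\nu^{n+1}$ on $\Sigma$, using the structural identities $\nabla_{ij}u=-\tilde h_{ij}\nu^{n+1}+2u_iu_j/u-|Du|^2\delta_{ij}/u$ and $\kappa_i=u\tilde\kappa_i+\nu^{n+1}$ established above, together with $P_k(\kappa)=\sigma$ and Euler's identity $\sum \dot{P}_k^i\kappa_i=k\sigma$. A short computation along the lines of \cite{GS2} will show that $L\varphi\ge 0$ up to controlled lower-order terms, so any interior maximum of $\varphi$ is dominated by its boundary values; combined with the previous step this furnishes the global bound.

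The \emph{main obstacle} is the barrier placement: one must verify that for every $p\in\partial\Omega$ a spherical cap can be positioned with its base circle tangent to $\partial\Omega$ at $p$ from outside while still containing $\Omega$ in its vertical projection, using only the nonnegative mean curvature hypothesis on $\partial\Omega$. Once this geometric fact is secured, the rest of the argument is standard fully nonlinear maximum-principle machinery, and crucially the resulting constant $a_1$ depends only on $n$, $\Omega$ and $\sigma$ — in particular, it is independent of the semi-convexity constant $A$ appearing later in Theorem \ref{main}.
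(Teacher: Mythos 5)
First, note that the paper does not actually prove Lemma \ref{bnu}: it is collected from \cite{GS2} (see also \cite{GS1}, \cite{Lu1}), so your proposal can only be measured against the cited Guan--Spruck argument. Your reduction is the right one: $\nu^{n+1}=(1+|Du|^2)^{-1/2}$, so the lemma is a global gradient bound, and the equidistant spherical caps you describe (upper portions of Euclidean spheres of radius $R$ centered at height $-\sigma^{1/k}R$, which by $\kappa_i=u\tilde\kappa_i+\nu^{n+1}$ have constant hyperbolic principal curvature $\sigma^{1/k}$, hence $P_k=\sigma$, and slope $\sqrt{1-\sigma^{2/k}}/\sigma^{1/k}$ at the base circle) are indeed the standard comparison hypersurfaces in this literature.

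The genuine gap is exactly the step you flag and then wave through: the barrier placement. Your argument needs, at \emph{every} $p\in\partial\Omega$, a cap whose base ball contains $\Omega$ and whose base sphere is tangent to $\partial\Omega$ at $p$. A Euclidean ball is convex, so any ball containing $\Omega$ contains its convex hull; consequently such an enclosing tangent ball can exist only when $p$ lies on the boundary of the convex hull of $\Omega$. For a nonconvex domain with merely mean-convex boundary (and for $n\geq 4$ there are plenty: bend a cylinder, drill a shallow dimple, take a fat solid torus), boundary points in the concave regions lie in the interior of the convex hull and admit no such ball, so the placement fails precisely where a bound is needed; nonnegative mean curvature of $\partial\Omega$ is far weaker than this enclosing-sphere condition and does not ``permit'' it. In the cited proofs, mean convexity enters quite differently: the estimate is run on the approximating Dirichlet problems ($u=\epsilon$ on $\partial\Omega$), with local boundary barriers built from the distance function to $\partial\Omega$ (or over exterior tangent balls of uniform radius), combined with interior estimates --- this is also where the dependence of $a_1$ on $\Omega$ comes from. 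Two further soft spots: your interior step is optimistic, since by \eqref{fnu} the zeroth-order term $kF(1+(\nu^{n+1})^2)$ has the unfavorable sign, so $-\log\nu^{n+1}$ is not obviously governed by a straightforward maximum principle and Guan--Spruck must combine it with the height function and the structure of the equation; and the comparison ``$u\le w_p$'' with both functions degenerating on the asymptotic boundary should be justified at the level of the approximate problems rather than asserted directly. As it stands, the proposal establishes the lemma essentially only for convex $\Omega$.
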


\begin{lem}\label{nu,l}
\begin{equation}\label{fnu}
    \dot{F}^{ij}\nabla_{ij}\nu^{n+1} =-\f\dot{F}^{ij}h_{kj}h_i{}^k+\dot{F}^{ii}\r\nu^{n+1}+2\frac{\dot{F}^{ij}u_i\nu^{n+1}_j}{u}+kF(1+(\nu^{n+1})^2).
\end{equation}
\end{lem}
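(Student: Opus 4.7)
The plan is to establish \eqref{fnu} by a direct computation: I would decompose the hyperbolic covariant Hessian of $\nu^{n+1}$ through the Euclidean Hessian via the Christoffel difference in \eqref{gamma}, contract with $\dot{F}^{ij}$, and then collapse the resulting expression using the homogeneity of $F=P_k$ and the constancy $F\equiv\sigma$ on $\Sigma$.

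First, I would compute $(\nu^{n+1})_i=\tilde{h}_i{}^k u_k$ from the Euclidean Weingarten formula applied to $\nu^{n+1}=\nu\cdot e$; differentiating once more on $\Sigma$ and using $\tilde{\nabla}_{ij}u=-\tilde{h}_{ij}\nu^{n+1}$ together with Euclidean Codazzi, I would obtain
\[
\tilde{\nabla}_{ij}\nu^{n+1}=\tilde{g}^{kl}(\tilde{\nabla}_l\tilde{h}_{ij})u_k-\tilde{h}_i{}^k\tilde{h}_{jk}\nu^{n+1}.
\]
Applying \eqref{gamma} then yields
\[
\nabla_{ij}\nu^{n+1}=\tilde{\nabla}_{ij}\nu^{n+1}+\tfrac{1}{u}\big(u_i(\nu^{n+1})_j+u_j(\nu^{n+1})_i-\tilde{g}^{kl}u_l(\nu^{n+1})_k\tilde{g}_{ij}\big).
\]
Contracting with $\dot{F}^{ij}$, the symmetric part of the Christoffel correction produces the target term $2\dot{F}^{ij}u_i(\nu^{n+1})_j/u$ directly. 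For the $\tilde{h}\tilde{h}$ piece I would substitute $\tilde{h}_i{}^k=(\nu^{n+1}\delta_i{}^k-h_i{}^k)/u$ from \eqref{hik} and use the homogeneity $\dot{F}^{ij}h_{ij}=kF$ to obtain
\[
\dot{F}^{ij}\tilde{h}_i{}^k\tilde{h}_{jk}=\dot{F}^{ij}h_i{}^k h_{jk}-2kF\nu^{n+1}+(\nu^{n+1})^2\dot{F}^{ii},
\]
which accounts for $-\dot{F}^{ij}h_{kj}h_i{}^k\nu^{n+1}$ and contributes part of $kF(1+(\nu^{n+1})^2)$. For the remaining Codazzi-type piece $\dot{F}^{ij}\tilde{g}^{kl}(\tilde{\nabla}_l\tilde{h}_{ij})u_k$, I would substitute $\tilde{h}_{ij}=-u h_{ij}+u\nu^{n+1}g_{ij}$ and expand, using crucially that $\dot{F}^{ij}\nabla_l h_{ij}=\partial_l F=0$ since $F\equiv\sigma$ on $\Sigma$. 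The residuals, together with $|\tilde{\nabla}u|^2=1-(\nu^{n+1})^2$ and the trace-type term $-u\tilde{g}^{kl}u_l(\nu^{n+1})_k\dot{F}^{ii}$ arising from the Christoffel correction (using $\dot{F}^{ij}\tilde{g}_{ij}=u^2\dot{F}^{ii}$), should assemble into the missing $-\dot{F}^{ii}\nu^{n+1}$ and complete the $kF(1+(\nu^{n+1})^2)$ contribution.

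The main obstacle is the bookkeeping in this last collection step: each geometric term on the right-hand side of \eqref{fnu} receives contributions from several different sources (the Christoffel correction, the expansion of $\dot{F}^{ij}\tilde{h}_i{}^k\tilde{h}_{jk}$, and the expansion of the Codazzi-type term), so one must verify that all residuals cancel exactly. The decisive analytical input is $\dot{F}^{ij}\nabla_l h_{ij}=0$, which eliminates a third-derivative contribution of $h_{ij}$ that would otherwise obstruct the identity; this reflects nothing more than the fact that $F$ is constant along $\Sigma$. The appearance of the ``$+1$'' in $kF(1+(\nu^{n+1})^2)$ traces back to the ambient sectional curvature $-1$ of $\mathbb{H}^{n+1}$, entering through the identity $|\tilde{\nabla}u|^2=1-(\nu^{n+1})^2$.
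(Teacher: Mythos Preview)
Your outline is correct and leads to the identity; the decisive inputs you isolate---the homogeneity $\dot{F}^{ij}h_{ij}=kF$, the constancy $\dot{F}^{ij}\nabla_l h_{ij}=0$, and the relation $|\tilde\nabla u|^2=1-(\nu^{n+1})^2$---are exactly the ones the paper uses. The organizational route differs slightly: rather than computing the Euclidean Hessian $\tilde\nabla_{ij}\nu^{n+1}$ and then applying the Christoffel correction \eqref{gamma}, the paper first writes the \emph{hyperbolic} first derivative as
\[
\nabla_i\nu^{n+1}=(\nu^{n+1}\delta_i{}^k-h_i{}^k)\frac{u_k}{u}
\]
and differentiates this expression directly. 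This keeps the whole computation on the hyperbolic side and so avoids the extra layer of bookkeeping you anticipate in converting $\tilde\nabla_l\tilde h_{ij}$ back through $\tilde h_{ij}=-u h_{ij}+u\nu^{n+1}g_{ij}$ and the Christoffel difference; in particular, the Codazzi-type third-order term appears immediately as $-\dot{F}^{ij}h_i{}^k{}_j u_k/u$ and vanishes in one stroke. Your route is equally valid and perhaps conceptually cleaner in its separation of Euclidean versus conformal contributions, but the paper's ordering makes the final collection of terms shorter.
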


\begin{proof}
\begin{align}\label{nu}
\tilde{\nabla}_i\nu^{n+1}=\tilde{h}_i^ku_k,
    \end{align}
   Combining \eqref{hik} and \eqref{nu}, we obtain at point $p$ that
   \begin{align}\label{nu2}
   \nabla_i\nu^{n+1}=(\nu^{n+1}\delta_i{}^k-h_i{}^k)\frac{u_k}{u}. 
   \end{align}
Multiplying \eqref{nu2} with $\dot{F}^{ij}u_i$, we have
   \begin{align}
\dot{F}^{ij}u_i\nu^{n+1}_j=\frac{\dot{F}^{ij}u_i u_j\nu^{n+1}}{u}-\frac{\dot{F}^{ij}h_j^k u_k u_i}{u}.
   \end{align}
   Derivating both sides of \eqref{nu2}, we have 
\begin{align}\label{nuij1}
       \nabla_{ij}\nu^{n+1}=&-\frac{h_i^k{}_j u_k}{u}+\frac{\nabla_j\nu^{n+1}u_i}{u}-\frac{u_{kj}(h_i{}^k-\nu^{n+1}\delta_i{}^k)}{u}-\frac{u_j\nabla_i\nu^{n+1}}{u}\nonumber\\
       =&-\frac{h_i^k{}_j u_k}{u}+\frac{\nabla_j\nu^{n+1}u_i}{u}-\frac{u_{kj}h_i{}^k}{u}+\frac{\nu^{n+1}u_{ij}}{u}-\frac{u_j\nabla_i\nu^{n+1}}{u}.
\end{align}
   Multiply both sides of \eqref{nuij1} by $\dot{F}^{ij}$, and sum over $i,j$. This gives 
   \begin{align}
    \dot{F}^{ij}\nabla_{ij}\nu^{n+1}
       =&-\dot{F}^{ij}\frac{h_i^k{}_j u_k}{u}-\dot{F}^{ij}\frac{u_{kj}h_i{}^k}{u}+\dot{F}^{ij}\frac{\nu^{n+1}u_{ij}}{u}\nonumber\\
=&\frac{\dot{F}^{ij}\tilde{h}_{kj}h_i{}^k\nu^{n+1}}{u}-2\frac{\dot{F}^{ij}h_i^lu_lu_j}{u^2}+\frac{\dot{F}^{ij}h_{ij}u_k^2}{u^2}\nonumber\\
&-\frac{\dot{F}^{ij}\tilde{h}_{ij}(\nu^{n+1})^2}{u}
+2\frac{\dot{F}^{ij}u_iu_j\nu^{n+1}}{u^2}-\frac{\dot{F}^{ii}u_k^2\nu^{n+1}}{u^2}\nonumber\\
       =&\dot{F}^{ij}(\nu^{n+1}\delta_{kj}-h_{kj})h_i{}^k\nu^{n+1}+2\frac{\dot{F}^{ij}u_i\nu^{n+1}_j}{u}+kF(1-(\nu^{n+1})^2)\nonumber\\
       &-\f\dot{F}^{ii}(\nu^{n+1})^3-kF(\nu^{n+1})^2\r-\dot{F}^{ii}\f\nu^{n+1}-(\nu^{n+1})^3\r\nonumber\\
       =&-\f\dot{F}^{ij}h_{kj}h_i{}^k+\dot{F}^{ii}\r\nu^{n+1}+2\frac{\dot{F}^{ij}u_i\nu^{n+1}_j}{u}+kF(1+(\nu^{n+1})^2).
\end{align}
\end{proof}
\subsection{Symmetric function}
Here we state some algebraic properties of the elementary symmetric function $\s_m(\k)$, $m=1,\cdots,n$, where $\kappa=(\kappa_1,\cdots,\kappa_n)$. We recall $\s_m(\k)$ defined by
	\begin{equation*}
	\s_m(\k)=\sum_{1\leq i_1<\cdots<i_m \leq n}\k_{i_1}\cdots \k_{i_m}, \quad m=1,2, \cdots,n.
	\end{equation*}
	The Garding cone $\Gamma_m^+$ is an open symmetric convex cone in $\mathbb{R}^n$ with vertex at the origin, given by
	\begin{align*}
	\Gamma_m^{+}=\lbrace\f \kappa_1,\cdots, \kappa_n\r\in\mathbb{R}^n |\s_j(\kappa)>0,  \forall j= 1,\cdots,m\rbrace.
	\end{align*}
	Clearly, 
	\begin{align*}
	\Gamma_n^+\subset\cdots\subset {\Gamma}_m ^+\subset\cdots\subset\Gamma_1^+.
	\end{align*}
	In particular, $\Gamma^+ = \Gamma_n^+$ is called the positive cone,
	\begin{align*}
	\Gamma^+=\lbrace\f \kappa_1,\cdots, \kappa_n\r\in\mathbb{R}^n |\kappa_1>0,\cdots,\kappa_n>0\rbrace.
	\end{align*}
	Always assume $\k_1\geq \k_2\geq\cdots\geq \k_n$. We collect some properties of $\s_k(\kappa)$ where $\k\in\Gamma_k$, referring to \cite{Ben}, \cite{HS}, \cite{RW} and \cite{Wang09}.
\begin{lem}\label{f11k12}
$\sum_{p=1}^n\dot{\s}_k^{pp}\k_p^2=\s_1\s_k-(k+1)\s_{k+1} $ and $\sum_{p=1}^n \dot{\s}_k^{pp}=(n-k+1)\sigma_{k-1}.$
\end{lem}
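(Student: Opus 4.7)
The plan is to prove both identities by combinatorial rewriting after recognizing $\dot{\s}_k^{pp}$ as $\sigma_{k-1}(\kappa|p)$, where the notation $\sigma_j(\kappa|p)$ denotes the $j$-th elementary symmetric polynomial evaluated on the $(n-1)$-tuple obtained by deleting $\kappa_p$. This identification follows immediately from the splitting
\begin{equation*}
\sigma_k(\kappa)=\sigma_k(\kappa|p)+\kappa_p\,\sigma_{k-1}(\kappa|p),
\end{equation*}
in which the first term has no $\kappa_p$-dependence.

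For the second identity, I would simply swap the order of summation: each $(k-1)$-subset $S\subset\{1,\dots,n\}$ appears in $\sigma_{k-1}(\kappa|p)$ precisely when $p\notin S$, which happens for exactly $n-(k-1)=n-k+1$ indices $p$. Therefore
\begin{equation*}
\sum_{p=1}^n \dot{\s}_k^{pp}=\sum_{p=1}^n\sigma_{k-1}(\kappa|p)=(n-k+1)\sigma_{k-1}.
\end{equation*}

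For the first identity, I would use the splitting formula in reverse, writing $\kappa_p\,\sigma_{k-1}(\kappa|p)=\sigma_k(\kappa)-\sigma_k(\kappa|p)$, multiplying by $\kappa_p$, and summing:
\begin{equation*}
\sum_{p=1}^n\kappa_p^{2}\,\sigma_{k-1}(\kappa|p)=\sigma_1\sigma_k-\sum_{p=1}^n\kappa_p\,\sigma_k(\kappa|p).
\end{equation*}
The remaining sum $\sum_p \kappa_p\,\sigma_k(\kappa|p)$ is handled by the same order-swap: each $(k+1)$-subset $T$ contributes $\prod_{i\in T}\kappa_i$ once for every $p\in T$, giving the factor $(k+1)$, so the sum equals $(k+1)\sigma_{k+1}$. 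This yields the desired equality.

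There is no real obstacle; both identities are purely combinatorial consequences of the recursion for elementary symmetric polynomials, and the only care needed is to keep track of which subsets are being counted and with what multiplicity.
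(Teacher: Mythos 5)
Your proof is correct, and both identities are indeed standard consequences of the recursion $\sigma_k(\kappa)=\sigma_k(\kappa|p)+\kappa_p\,\sigma_{k-1}(\kappa|p)$ together with the double-counting of subsets. The paper does not actually give a proof of this lemma; it is listed among facts ``collected'' from the references (Andrews, Huisken--Sinestrari, Ren--Wang, Wang), so there is no argument in the paper to compare with. Your derivation is the expected elementary one and fills that gap cleanly: the identification $\dot{\sigma}_k^{pp}=\sigma_{k-1}(\kappa|p)$, the count of $n-k+1$ indices $p$ omitting a fixed $(k-1)$-set for the second identity, and the $(k+1)$-fold overcount of each $(k+1)$-set in $\sum_p \kappa_p\sigma_k(\kappa|p)$ for the first identity are all correct.
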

 \begin{lem}\label{lem'}
    $ \frac{\partial \s_k}{\partial \k_n}\geq \cdots\geq \frac{\partial \s_k}{\partial \k_1}\geq0.$
 \end{lem}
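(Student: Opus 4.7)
The plan is to reduce the chain of inequalities to an elementary algebraic identity combined with the standard Garding-cone inclusion property. The identity is
\[
\s_{k-1}(\k|i) - \s_{k-1}(\k|j) = (\k_j - \k_i)\,\s_{k-2}(\k|i,j), \qquad i \neq j,
\]
where $(\k|i)$ denotes the $(n-1)$-tuple obtained by deleting the $i$-th coordinate; this follows by splitting each side according to whether the complementary index $j$ (resp.\ $i$) is included in the monomial. Since $\partial \s_k/\partial \k_i = \s_{k-1}(\k|i)$ and since the ordering $\k_1 \geq \cdots \geq \k_n$ gives $\k_j - \k_i \leq 0$ whenever $i < j$, the monotonicity $\s_{k-1}(\k|i) \leq \s_{k-1}(\k|j)$ reduces to proving $\s_{k-2}(\k|i,j) \geq 0$ for every $i < j$, and the terminal positivity reduces to $\s_{k-1}(\k|1) \geq 0$.

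Both of these reduce to the classical restriction property of the Garding cone: if $\k \in \Gamma_k \subset \R^n$, then the deleted vector $(\k|i) \in \R^{n-1}$ belongs to $\Gamma_{k-1}^{n-1}$ for every index $i$. Applying this once yields $\s_{k-1}(\k|i) > 0$ for each $i$, and applying it twice yields $\s_{k-2}(\k|i,j) > 0$ for each $i \neq j$. Feeding these into the identity above produces the full chain
\[
0 \leq \s_{k-1}(\k|1) \leq \s_{k-1}(\k|2) \leq \cdots \leq \s_{k-1}(\k|n),
\]
which is exactly the statement of Lemma \ref{lem'}.

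The only nontrivial ingredient is the restriction property itself, equivalently the positivity of each $\partial_i \s_k$ on $\Gamma_k$. This is a classical consequence of Garding's theory of hyperbolic polynomials (equivalently, of the concavity of $\s_k^{1/k}$ on $\Gamma_k$); it is typically obtained by an induction on $k$ in which one argues that $\s_{k-1}(\k|i)$ cannot vanish inside $\Gamma_k$ by tracing the line $t \mapsto \k - t e_i$ out of the cone. Since Lemma \ref{lem'} is quoted from the standard references \cite{Ben,HS,RW,Wang09}, I would simply cite this well-known positivity rather than redevelop it, after which the proof above is essentially a two-line computation.
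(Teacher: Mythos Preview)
Your argument is correct: the identity $\s_{k-1}(\k|i)-\s_{k-1}(\k|j)=(\k_j-\k_i)\s_{k-2}(\k|i,j)$ together with the restriction property $\k\in\Gamma_k\Rightarrow(\k|i)\in\Gamma_{k-1}^{n-1}$ yields the full chain, and you are right that the only substantive input is the classical Garding positivity $\partial_i\s_k>0$ on $\Gamma_k$, which you appropriately cite rather than reprove.

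There is nothing to compare against: the paper does not supply its own proof of Lemma~\ref{lem'} but simply lists it among the standard properties of $\s_k$ collected from the references \cite{Ben,HS,RW,Wang09}. Your write-up is a perfectly good self-contained justification of a fact the paper treats as background.
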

 \begin{lem}\label{fk1}
$\dot{\s_k}^{11}\k_1\geq \frac{k}{n}\s_k$
 \end{lem}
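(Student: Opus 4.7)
The plan is to reduce the inequality to a statement about the $(n-1)$-tuple $\tilde{\kappa}:=(\kappa_2,\dots,\kappa_n)$ obtained by deleting the largest entry, and then to invoke Newton's inequalities on $\tilde{\kappa}$. First I would use the identities $\dot{\sigma}_k^{11}=\sigma_{k-1}(\tilde{\kappa})$ and $\sigma_k(\kappa)=\kappa_1\sigma_{k-1}(\tilde{\kappa})+\sigma_k(\tilde{\kappa})$ to convert the target $\kappa_1\dot{\sigma}_k^{11}\geq\frac{k}{n}\sigma_k$ into the equivalent form
\[
(n-k)\,\kappa_1\,\sigma_{k-1}(\tilde{\kappa})\;\geq\;k\,\sigma_k(\tilde{\kappa}).
\]
Normalising by binomial coefficients and setting $p_j:=\sigma_j(\tilde{\kappa})/\binom{n-1}{j}$, this is the same as the single inequality $p_k\leq\kappa_1\,p_{k-1}$.

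Next I would verify the positivity needed to chain Newton's inequalities. The hypothesis $\kappa\in\Gamma_k$ forces $\tilde{\kappa}\in\Gamma_{k-1}$, a standard Gårding-cone fact which can be extracted from Lemma~\ref{lem'} applied iteratively at the levels $k,k-1,\dots,1$; consequently $p_0,p_1,\dots,p_{k-1}>0$. Newton's inequalities $p_{j-1}p_{j+1}\leq p_j^2$ are purely algebraic consequences of the polynomial with real roots $\kappa_2,\dots,\kappa_n$, so they hold on $\tilde{\kappa}$ without any further sign assumption. Dividing through by the positive $p_j$'s then chains
\[
\frac{p_k}{p_{k-1}}\;\leq\;\frac{p_{k-1}}{p_{k-2}}\;\leq\;\cdots\;\leq\;\frac{p_1}{p_0}\;=\;\frac{\kappa_2+\cdots+\kappa_n}{n-1}\;\leq\;\kappa_1,
\]
where the final step uses the assumed ordering $\kappa_1\geq\kappa_i$. (If $p_k$ is non-positive, the leftmost ratio is automatically at most $\kappa_1$, so the chain survives trivially.) This yields $p_k\leq\kappa_1 p_{k-1}$, which is the reformulated target.

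The delicate ingredient is securing strict positivity of $p_0,\dots,p_{k-1}$, since this is what legitimises the division in the Newton chain; this is precisely the containment $\tilde{\kappa}\in\Gamma_{k-1}$, the structural input one must extract from $\kappa\in\Gamma_k$. Everything else is bookkeeping: the splitting of $\sigma_k$ into its $\kappa_1$-containing and $\kappa_1$-free parts, Newton's inequality (which needs no Gårding hypothesis at all), and the elementary bound $p_1\leq\kappa_1$ coming from the assumed ordering of the entries of $\kappa$.
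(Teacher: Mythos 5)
Your argument is correct. The reduction $\dot{\sigma}_k^{11}\kappa_1\geq\tfrac{k}{n}\sigma_k\iff(n-k)\kappa_1\sigma_{k-1}(\tilde\kappa)\geq k\,\sigma_k(\tilde\kappa)\iff\kappa_1 p_{k-1}\geq p_k$ is a clean bookkeeping check (the binomial ratio $\binom{n-1}{k}/\binom{n-1}{k-1}=(n-k)/k$ cancels precisely). The structural input $\kappa\in\Gamma_k\Rightarrow\tilde\kappa=(\kappa|1)\in\Gamma_{k-1}$ is exactly what licenses dividing through in the Newton chain, and you correctly note Newton's inequalities themselves need no sign hypothesis. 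The final bound $p_1\leq\kappa_1$ uses only the ordering $\kappa_1\geq\kappa_i$, and the non-positive-$p_k$ case is handled trivially since $\kappa_1p_{k-1}>0$. The paper states this lemma without proof, citing standard references; your Newton-inequality derivation is the standard route and fills that gap correctly and completely.
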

 \begin{lem}\label{con}
   $\frac{\s_k}{\s_{k-1}}$ is concave.
 \end{lem}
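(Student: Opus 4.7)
The plan is to establish concavity of $q_k := \s_k/\s_{k-1}$ on $\G_k$ using Garding's theory of hyperbolic polynomials. The key observation is that $\s_{k-1}$ is, up to the positive multiplicative constant $n-k+1$, the directional derivative of $\s_k$ in the direction $e := (1,\dots,1) \in \G_k$.

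Specifically, expanding
\begin{equation*}
\s_k(\k + te) \;=\; \sum_{j=0}^{k}\binom{n-j}{k-j}\,\s_j(\k)\,t^{k-j}
\end{equation*}
shows $\partial_e\s_k = (n-k+1)\,\s_{k-1}$, so that $q_k = (n-k+1)\,\s_k/\partial_e\s_k$. Now $\s_k$ is hyperbolic of degree $k$ with respect to $e$ (for every $\k\in\R^n$, the univariate polynomial $t\mapsto \s_k(\k+te)$ has only real roots, and the connected component of $\{\s_k>0\}$ containing $e$ is exactly $\G_k$). By a classical result in the theory of hyperbolic polynomials (cf.\ \cite{Ben}), for any such $P$ of degree $k$ and any $v$ in its positivity cone $\G$, the ratio $P/\partial_v P$ is concave on $\G$. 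Applying this with $P = \s_k$ and $v = e$ immediately yields concavity of $q_k$.

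As a self-contained alternative, one diagonalizes at a fixed $\k\in\G_k$ and computes $\ddot q_k^{pq}$ directly from the symmetric-function identities $\partial_p\s_m = \s_{m-1}(\k|p)$ and $\partial_p\partial_q\s_m = \s_{m-2}(\k|p,q)$ for $p\neq q$ (and $0$ otherwise), together with the recursion $\s_m(\k) = \k_i\s_{m-1}(\k|i) + \s_m(\k|i)$. After multiplying the quadratic form $\sum_{p,q}\ddot q_k^{pq}\xi_p\xi_q$ through by $\s_{k-1}^3$, non-positivity reduces to a combination of Newton--MacLaurin inequalities, in particular
\begin{equation*}
\s_{k-2}\,\s_k \;\leq\; \frac{(k-1)(n-k+1)}{k(n-k+2)}\,\s_{k-1}^2
\end{equation*}
applied term by term.

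The main obstacle in the direct approach is the algebraic book-keeping, which is routine but tedious. The Garding route is cleaner but requires invoking the derivative-ratio concavity theorem, which is the conceptual crux and must either be cited or proved separately via the factorization $\s_k(\k+te) = \binom{n}{k}\prod_j(t+\mu_j(\k))$ with $\mu_j(\k)>0$ on $\G_k$, from which one identifies $\s_k/\partial_e\s_k$ with the harmonic-mean expression $(\sum_j \mu_j(\k)^{-1})^{-1}$ and exploits the concavity properties of Garding eigenvalues.
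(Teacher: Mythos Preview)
Your proposal is correct. Both routes you sketch---the G{\aa}rding hyperbolic-polynomial argument via $\partial_e\s_k=(n-k+1)\s_{k-1}$ and the harmonic-mean identification $\s_k/\partial_e\s_k=\bigl(\sum_j\mu_j^{-1}\bigr)^{-1}$, and the direct symmetric-function computation reducing to Newton--MacLaurin---are standard and valid proofs of the concavity of $q_k$ on $\G_k$.

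The paper, however, does not prove this lemma at all: it is listed among the properties of $\s_k$ that are simply collected from the literature (the references given are \cite{Ben}, \cite{HS}, \cite{RW}, \cite{Wang09}), with no argument supplied. So your write-up goes strictly beyond what the paper does here. If you want to match the paper's treatment, a one-line citation suffices; if you prefer to include a proof, the G{\aa}rding route is the cleaner choice and aligns well with the paper's later use of the Huisken--Sinestrari induction on $q_k$ in Lemma~\ref{HS}, which is itself a refinement of the same circle of ideas.
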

 \begin{lem}\label{k1}
    For any $1\leq s<k$, $\s_s>\k_1\k_2\cdots\k_s$.
 \end{lem}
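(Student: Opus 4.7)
The lemma is a purely algebraic inequality on the Garding cone $\Gamma_k$, and my plan is an induction on $s$ driven by the one-term expansion
$$\sigma_s(\kappa) = \sigma_s(\kappa|1) + \kappa_1\,\sigma_{s-1}(\kappa|1),\qquad \kappa|1:=(\kappa_2,\ldots,\kappa_n).$$
The whole proof then reduces to combining this recursion with two standard positivity facts on $\Gamma_k$ and feeding the reduced tuple $\kappa|1$ back into the induction.

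The two standing facts are: (i) the largest entry is strictly positive, $\kappa_1>0$, because otherwise every $\kappa_i\le 0$ would give $\sigma_1(\kappa)\le 0$, contradicting $\kappa\in\Gamma_k\subset\Gamma_1$; and (ii) deleting any single coordinate keeps us in the next-smaller cone, $\kappa|i\in\Gamma_{k-1}$, so that $\sigma_j(\kappa|i)>0$ for every $1\le j\le k-1$ and every $i$. The top case $j=k-1$ of (ii) is essentially the positivity $\partial\sigma_k/\partial\kappa_i\ge 0$ recorded in Lemma \ref{lem'}, strengthened to strict inequality in the interior of $\Gamma_k$; the extension to $j<k-1$ follows by iterating the identity $\sigma_{j+1}(\kappa) = \sigma_{j+1}(\kappa|i) + \kappa_i\,\sigma_j(\kappa|i)$ together with (i).

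For the base case $s=1$ the assertion $\sigma_1(\kappa)>\kappa_1$ is exactly $\sigma_1(\kappa|1)>0$, which is fact (ii) with $j=1$, legal because $s<k$ forces $k\ge 2$. For the inductive step I fix $s\ge 2$ and assume the lemma holds for $s-1$ in every Garding cone $\Gamma_{k-1}\subset\mathbb{R}^{n-1}$ with $s-1<k-1$. Given $\kappa\in\Gamma_k\subset\mathbb{R}^n$ with $s<k$, fact (ii) gives $\kappa|1\in\Gamma_{k-1}$, and since $(s-1)<(k-1)$ the induction hypothesis applied to $\kappa|1$ (whose ordered top $s-1$ entries are $\kappa_2,\ldots,\kappa_s$) yields $\sigma_{s-1}(\kappa|1)>\kappa_2\kappa_3\cdots\kappa_s$. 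Combining this with $\kappa_1>0$ and with $\sigma_s(\kappa|1)>0$ (fact (ii) with $j=s\le k-1$), the recursion gives
$$\sigma_s(\kappa) = \sigma_s(\kappa|1) + \kappa_1\,\sigma_{s-1}(\kappa|1) > 0 + \kappa_1\kappa_2\cdots\kappa_s,$$
which is the desired inequality for $s$ and closes the induction.

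The only delicate point I foresee is upgrading the recorded $\partial\sigma_k/\partial\kappa_i\ge 0$ of Lemma \ref{lem'} to the strict positivity $\sigma_j(\kappa|i)>0$ for the full range $1\le j\le k-1$ demanded by (ii); once this classical cone-theoretic fact is made explicit, the recursion and positivity of $\kappa_1$ render the inductive step a one-line computation.
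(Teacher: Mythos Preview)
Your induction argument is correct. The recursion $\sigma_s(\kappa)=\sigma_s(\kappa|1)+\kappa_1\sigma_{s-1}(\kappa|1)$ together with the cone-stability fact $\kappa|1\in\Gamma_{k-1}$ and the positivity $\kappa_1>0$ does close the induction cleanly, and your handling of the base case and the index bookkeeping $(s-1)<(k-1)$ is accurate.

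Note, however, that the paper does not supply its own proof of this lemma: it is listed among the collected properties of $\sigma_k$ with a blanket citation to \cite{Ben}, \cite{HS}, \cite{RW} and \cite{Wang09}. So there is no in-paper argument to compare against; your write-up simply fills in what the paper takes as a standard fact from the literature. The one place your exposition is slightly loose is the justification of fact~(ii), namely $\sigma_j(\kappa|i)>0$ for all $1\le j\le k-1$. Your phrase ``iterating the identity $\sigma_{j+1}(\kappa)=\sigma_{j+1}(\kappa|i)+\kappa_i\sigma_j(\kappa|i)$ together with (i)'' does not by itself produce the desired positivity; the clean route is to observe that $\Gamma_k\subset\Gamma_{j+1}$ for every $j\le k-1$ and then invoke the monotonicity of Lemma~\ref{lem'} for $\sigma_{j+1}$ (not just for $\sigma_k$) to get $\partial\sigma_{j+1}/\partial\kappa_i=\sigma_j(\kappa|i)\ge 0$, with strict inequality in the open cone. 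You already flag this as the delicate point, so this is a matter of tightening the exposition rather than a gap in the argument.
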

 \begin{lem}\label{lemn}
   $\k_k>0$ and $|\k_n|<(n-k)\k_k.$  
 \end{lem}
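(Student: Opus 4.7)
The plan is to establish the two assertions of Lemma~\ref{lemn} separately via symmetric-function manipulations.

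\textbf{Step 1} ($\kappa_k>0$). I would use an interlacing/Descartes' sign argument. Consider the characteristic polynomial $p(t)=\prod_{i=1}^n(t-\kappa_i)$. Applying Rolle's theorem $n-k$ times produces
\[
Q(t):=\frac{p^{(n-k)}(t)}{(n-k)!}=\sum_{j=0}^{k}(-1)^j\binom{n-j}{k-j}\,\sigma_j(\kappa)\,t^{k-j},
\]
whose $k$ real roots $\mu_1\geq\cdots\geq\mu_k$ interlace the $\kappa_i$: namely $\kappa_i\geq\mu_i\geq\kappa_{i+n-k}$ for $1\leq i\leq k$. Since $\kappa\in\Gamma_k$, every $\sigma_j(\kappa)$ with $0\leq j\leq k$ is strictly positive, so the coefficients of $Q$ strictly alternate in sign; Descartes' rule of signs then forces all $k$ real roots of $Q$ to be positive. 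In particular $\mu_k>0$, and the interlacing yields $\kappa_k\geq\mu_k>0$.

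\textbf{Step 2} ($|\kappa_n|<(n-k)\kappa_k$). I would split on the sign of $\kappa_n$. If $\kappa_n\geq 0$, the ordering and Step 1 give $|\kappa_n|=\kappa_n\leq\kappa_k$, and the paper's range $2\leq k\leq n-2$ (so $n-k\geq 2$) together with $\kappa_k>0$ gives $|\kappa_n|\leq\kappa_k<(n-k)\kappa_k$. If $\kappa_n<0$, the target reduces to $(n-k)\kappa_k+\kappa_n>0$. The key ingredient is the tail-sum inequality
\[
\kappa_k+\kappa_{k+1}+\cdots+\kappa_n>0\quad\text{for } \kappa\in\Gamma_k,
\]
a classical consequence of the G\aa rding hyperbolicity of $\sigma_k$: iterating the restriction property $\kappa\in\Gamma_k\Rightarrow(\kappa\,|\,i)\in\Gamma_{k-1}(\mathbb{R}^{n-1})$ and peeling off the top $k-1$ entries leaves a vector in $\Gamma_1(\mathbb{R}^{n-k+1})$, whose defining inequality is exactly this tail sum. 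Since $(n-k-1)\kappa_k\geq\kappa_{k+1}+\cdots+\kappa_{n-1}$ by the ordering, one then computes
\[
(n-k)\kappa_k+\kappa_n=\kappa_k+(n-k-1)\kappa_k+\kappa_n\geq\kappa_k+\kappa_{k+1}+\cdots+\kappa_n>0,
\]
which gives $-\kappa_n<(n-k)\kappa_k$, completing the proof.

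The main obstacle is the tail-sum inequality; while the remaining manipulations are elementary arithmetic on the ordered eigenvalues, the tail-sum does not follow immediately from Lemmas \ref{f11k12}--\ref{k1} stated just above, and must be drawn from the hyperbolic-polynomial structure underlying the G\aa rding cones (see e.g.\ \cite{HS}).
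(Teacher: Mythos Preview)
Your argument is correct, and for the second inequality it is essentially the paper's own proof: both hinge on the tail-sum $\kappa_k+\kappa_{k+1}+\cdots+\kappa_n>0$, obtained by iterating the restriction $(\kappa|i)\in\Gamma_{k-1}$, and then bound $-\kappa_n$ by $(n-k)\kappa_k$ via the ordering. (The paper phrases the tail-sum as ``from Lemma~\ref{lem'}, $\partial^{k-1}\sigma_k/\partial\kappa_1\cdots\partial\kappa_{k-1}=\kappa_k+\cdots+\kappa_n>0$''; the iteration of Lemma~\ref{lem'} implicitly uses exactly the restriction property you invoke, so your remark that this ``does not follow immediately'' from the stated lemmas is fair but minor.)

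Where you differ is Step~1. Your interlacing/Descartes argument is valid, but it is considerably more work than needed: the tail-sum you establish in Step~2 already gives $\kappa_k>0$ for free, since if $\kappa_k\leq 0$ then every $\kappa_j$ with $j\geq k$ is $\leq 0$ and the sum cannot be positive. This is precisely how the paper obtains $\kappa_k>0$. So your Step~1, while a nice self-contained proof of the classical fact $\kappa_k>0$ on $\Gamma_k$, is redundant once Step~2 is in place; reordering your proof to first derive the tail-sum would collapse Step~1 to one line and match the paper's argument exactly.
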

 \begin{proof}
 From Lemma \ref{lem'}, we have
 \begin{align*}
      \frac{\partial^{k-1} \s_k}{\partial\k_1\cdots\partial \k_{k-1}}=\k_k+\cdots+\k_n>0,
 \end{align*}
Suppose only $\kappa_n$ is negative, then $(n-k)\k_k>|\k_n|$.
 \end{proof}

\begin{lem}
    If $\s_{k+1}>-A$ for any positive constant A, then $\k$ is semi-convex, i.e. $\k_{\min}>-C(n,k,A)$.
\end{lem}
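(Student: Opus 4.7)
\emph{Proof plan.} We argue by contradiction: assume $\kappa \in \Gamma_k$ satisfies $\sigma_{k+1}(\kappa) > -A$ yet $\kappa_n := \kappa_{\min}$ can be arbitrarily negative. Set $t := -\kappa_n > 0$ (so $t \to \infty$) and $\kappa|n := (\kappa_1, \ldots, \kappa_{n-1})$. Implicit in the setting of Theorem \ref{main} is the fact that $\sigma_k(\kappa) = \binom{n}{k}\sigma > 0$ is a fixed positive constant, which we use below as a $t$-independent positive lower bound. The standard recursions
\begin{align*}
\sigma_k(\kappa|n) &= \sigma_k(\kappa) + t\,\sigma_{k-1}(\kappa|n),\\
\sigma_{k+1}(\kappa|n) &= \sigma_{k+1}(\kappa) + t\,\sigma_k(\kappa|n),
\end{align*}
combined with the hypothesis give $\sigma_{k+1}(\kappa|n) > t\,\sigma_k(\kappa|n) - A$; a straightforward induction using the recursions also shows $\sigma_j(\kappa|n) \geq \sigma_j(\kappa) > 0$ for all $j \leq k$, so $\kappa|n \in \Gamma_k$.

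The main input is the classical Newton inequality, valid for any real $(n-1)$-tuple:
\begin{equation*}
\sigma_k(\kappa|n)^2 \;\geq\; \gamma\,\sigma_{k-1}(\kappa|n)\,\sigma_{k+1}(\kappa|n), \qquad \gamma := \frac{(n-k)(k+1)}{(n-k-1)\,k} > 1,
\end{equation*}
where the strict inequality $\gamma - 1 = n/[(n-k-1)k] > 0$ relies on the standing assumption $k \leq n-2$. Substituting the lower bound for $\sigma_{k+1}(\kappa|n)$ and the identity $\sigma_k(\kappa|n) = \sigma_k(\kappa) + t\sigma_{k-1}(\kappa|n)$ into Newton's inequality and expanding, the $t^2 \sigma_{k-1}(\kappa|n)^2$ terms on the two sides combine with coefficient $-(\gamma-1) < 0$ and must be controlled by the remaining lower-order-in-$t$ terms; this forces the decay
\begin{equation*}
\sigma_{k-1}(\kappa|n) \;\leq\; C_1(n,k,A,\sigma_k(\kappa))/t \qquad \text{for $t$ large.}
\end{equation*}

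On the other hand, since $\kappa|n \in \Gamma_k$, the Maclaurin inequality $P_{k-1}(\kappa|n)^{1/(k-1)} \geq P_k(\kappa|n)^{1/k}$ (a consequence of Newton in the positive cone), combined with $\sigma_k(\kappa|n) \geq \sigma_k(\kappa) > 0$, gives a $t$-independent positive lower bound
\begin{equation*}
\sigma_{k-1}(\kappa|n) \;\geq\; c_1(n,k)\,\sigma_k(\kappa)^{(k-1)/k} > 0.
\end{equation*}
For $t$ exceeding some $C(n,k,A,\sigma_k(\kappa))$ these two bounds on $\sigma_{k-1}(\kappa|n)$ are incompatible, giving the desired contradiction and thus $\kappa_{\min} \geq -C(n,k,A,\sigma_k(\kappa))$.

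The main obstacle is the algebra of the Newton-inequality step: after substitution one must track carefully the partial cancellation of the leading $t^2\sigma_{k-1}(\kappa|n)^2$ terms to isolate the strictly positive coefficient $(\gamma - 1)$, and then argue that the residual $O(t)$ and $O(1)$ terms are consistent only with $\sigma_{k-1}(\kappa|n) = O(1/t)$. The range $2 \leq k \leq n-2$ enters essentially here, because outside it the crucial cancellation leaves no leading term to drive the decay.
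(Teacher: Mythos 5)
Your argument is correct, and it takes a genuinely different route from the paper. The paper's proof is a three-line argument built on the pointwise identity $\sum_i \dot{\s}_k^{ii}\k_i^2 = \s_1\s_k - (k+1)\s_{k+1}$ together with the monotonicity $\dot\s_k^{nn}\geq \frac1n\sum_i\dot\s_k^{ii}$ (Lemma~\ref{lem'}), the cone inequalities $\s_{k-1}>\k_1\cdots\k_{k-1}$ (Lemma~\ref{k1}) and $|\k_n|<(n-k)\k_k$ (Lemma~\ref{lemn}), and the constancy of $\s_k$: combining these yields $C(n,k)|\k_n|^k\k_1 < C\k_1 + C(k+1)A$ and hence a bound on $|\k_n|$. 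You instead isolate $\k_n$, pass to the reduced $(n-1)$-tuple $\k|n\in\Gamma_k$, and run Newton's inequality $\s_k(\k|n)^2\geq\gamma\,\s_{k-1}(\k|n)\s_{k+1}(\k|n)$ against the recursion $\s_j(\k|n)=\s_j(\k)+t\,\s_{j-1}(\k|n)$ (with $t=-\k_n$) and the hypothesis $\s_{k+1}(\k)>-A$; the strict gap $\gamma-1 = n/[k(n-k-1)]>0$ (which is precisely where $k\leq n-2$ enters) forces $\s_{k-1}(\k|n)=O(1/t)$, contradicting the $t$-independent Maclaurin lower bound $\s_{k-1}(\k|n)\geq c(n,k)\,\s_k(\k)^{(k-1)/k}>0$. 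The details check: after substitution the $t^2\a^2$ terms cancel to leave $(\g-1)t^2\a^2\leq\s_k^2+(2-\g)\s_k t\a+\g A\a$ (with $\a=\s_{k-1}(\k|n)$), whose positive $\a$-root is $O(1/t)$ for $t$ large, so the two bounds on $\a$ clash once $t$ exceeds an explicit threshold depending only on $n,k,A,\s_k$. Your approach is perhaps slightly longer than the paper's, but it is more self-contained in that it relies only on the classical Newton and Maclaurin inequalities rather than on the particular cone lemmas the paper has assembled, and it isolates cleanly the role of $k\leq n-2$.
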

\begin{proof}
There exists constant $C>0$, such that
\begin{align}\label{sk^2}
    \frac{\partial \s_k}{\partial \k_n} \k_n^2 \leq\sum_i \frac{\partial \s_k}{\partial \k_i} \k_i^2 = \s_1 \s_k -(k+1) \s_{k+1}<C\s_1+(k+1)A.
\end{align}
Then by Lemma \ref{lem'} we have 
\begin{align*}
    \sum_i \frac{\partial \s_k}{\partial \k_i} \k_n^2 <nC\s_1+n(k+1)A.
\end{align*}
Combining $(n-k+1)\s_{k-1}=\sum_i \frac{\partial \s_k}{\partial \k_i}$ and Lemma \ref{k1}, Lemma \ref{lemn}, there exists constant $C(n,k)>0$
\begin{align*}
C(n,k)|\k_n|^k\k_1<Cn^2\k_1+n(k+1)A.
\end{align*}
Thus we get $|\k_n|<C(n,k,A)$ by Lemma \ref{lemn}, completing the proof.
\end{proof}
 
 \begin{lem}\label{upperboundforkappak}
     If $\k_n>-A$, then $\k_k<C(\s_k, n, A).$
 \end{lem}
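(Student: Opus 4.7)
The plan is to mimic the argument of the preceding lemma, with the roles of $\sigma_{k+1}$ and $\kappa_n$ essentially exchanged: there, $\sigma_{k+1}\geq -A$ was exploited to bound $|\kappa_n|$; here, $\kappa_n\geq -A$ should be exploited to bound $\kappa_k$. I would first dispose of the trivial case $\kappa_k\leq A$ (for which $\kappa_k\leq C(\sigma_k,n,A)$ is immediate), so that in what follows $\kappa_k>A$ and consequently $|\kappa_i|\leq\kappa_1$ for every $i$ and $|\kappa_i|\leq\kappa_k$ for $i>k$.

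The central identity I plan to use is
\begin{equation*}
\sum_{i=1}^n \dot{\sigma}_k^{ii}(\kappa_i+A) \;=\; k\sigma_k + A(n-k+1)\sigma_{k-1},
\end{equation*}
obtained by combining the two parts of Lemma \ref{f11k12}. Semi-convexity makes every summand on the left non-negative, so in particular, keeping only the $i=k$ term,
\begin{equation*}
\dot{\sigma}_k^{kk}(\kappa_k+A) \;\leq\; k\sigma_k + A(n-k+1)\sigma_{k-1}.
\end{equation*}

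Next I would bound $\dot{\sigma}_k^{kk}$ from below. By Lemma \ref{lem'}, $\dot{\sigma}_k^{kk}\geq \dot{\sigma}_k^{11}$, and by Lemma \ref{fk1}, $\dot{\sigma}_k^{11}\geq k\sigma_k/(n\kappa_1)$, yielding
\begin{equation*}
\kappa_k+A \;\leq\; \frac{n\kappa_1}{k\sigma_k}\bigl[k\sigma_k+A(n-k+1)\sigma_{k-1}\bigr] \;=\; n\kappa_1 + \frac{nA(n-k+1)\kappa_1\sigma_{k-1}}{k\sigma_k}.
\end{equation*}
The apparent $\kappa_1$-dependence on the right is to be eliminated using Lemma \ref{k1}: since $\sigma_{k-1}>\kappa_1\kappa_2\cdots\kappa_{k-1}\geq\kappa_1\kappa_k^{k-2}$ and $\kappa_1\geq (\sigma_k/\binom{n}{k})^{1/k}$ (because $\sigma_k\leq\binom{n}{k}\kappa_1^k$), we get matching $\kappa_1$-powers on both sides, and rearrangement should yield a polynomial inequality of the form $\kappa_k^k\leq C_1\sigma_k+C_2 A\kappa_k^{k-1}$ with constants depending only on $n,k$. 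Dividing by $\kappa_k^{k-1}$ (when $\kappa_k\geq 1$, else the bound is trivial) produces $\kappa_k\leq C(\sigma_k,n,A)$.

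\emph{Main obstacle.} The delicate point is the cancellation of $\kappa_1$ in the last step: the lower-order symmetric function $\sigma_{k-1}$ is not a priori bounded in terms of $\sigma_k,n,A$ alone (it can blow up as $\kappa_1\to\infty$), so one cannot simply insert an upper bound on $\sigma_{k-1}$. Instead, Lemma \ref{k1} is used to produce a lower bound $\sigma_{k-1}\gtrsim\kappa_1\kappa_k^{k-2}$ whose $\kappa_1$ factor matches the $\kappa_1$ factor already present, allowing the $\kappa_1$'s to be absorbed. If this matching does not give a clean inequality, an alternative would be to expand $\sigma_k(\kappa+A\vec 1)\geq(\kappa_k+A)^k$ via the shift formula and combine with Lemma \ref{k1} applied iteratively to the nested cones $(\kappa|i_1,\dots,i_m)\in\Gamma_{k-m}$, which is more combinatorial but conceptually parallel.
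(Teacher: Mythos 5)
Your strategy is genuinely different from the paper's (which argues directly from the decomposition $\sigma_k=\kappa_1\cdots\kappa_k+\text{(lower-order terms)}$ and then uses Lemma~\ref{k1} with Maclaurin), but as written it does not close, and the gap is substantive rather than cosmetic.

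The trace identity and the step of keeping only the $i=k$ summand are fine (incidentally, $\sum_p\dot{\sigma}_k^{pp}\kappa_p=k\sigma_k$ is Euler's relation, not the first part of Lemma~\ref{f11k12}, but that is harmless). The proof breaks at the lower bound $\dot{\sigma}_k^{kk}\geq\dot{\sigma}_k^{11}\geq k\sigma_k/(n\kappa_1)$. This bound is lossy by a factor of roughly $\kappa_1/\kappa_k$: the left side is comparable to $\kappa_1\cdots\kappa_{k-1}$, while $k\sigma_k/(n\kappa_1)$ is only comparable to $\kappa_2\cdots\kappa_k$. After dividing through you obtain
\[
\kappa_k+A\leq n\kappa_1+\frac{nA(n-k+1)\kappa_1\sigma_{k-1}}{k\sigma_k},
\]
whose right-hand side already contains the standalone term $n\kappa_1\geq n\kappa_k$, so the inequality is trivially satisfied for every configuration in $\Gamma_k$ and carries no information about $\kappa_k$. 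There is no rearrangement that recovers $\kappa_k^k\leq C_1\sigma_k+C_2A\kappa_k^{k-1}$ from it. The proposed use of Lemma~\ref{k1} compounds the problem: $\sigma_{k-1}>\kappa_1\kappa_2\cdots\kappa_{k-1}$ is a \emph{lower} bound on $\sigma_{k-1}$, and $\sigma_{k-1}$ sits in the numerator of the quantity you are trying to bound from above, so you cannot legally substitute it; and even an upper bound on $\sigma_{k-1}$ would not rescue the argument because of the $n\kappa_1$ term. Your sketched alternative via $\sigma_k(\kappa+A\vec 1)\geq(\kappa_k+A)^k$ has the same difficulty: the shift formula introduces $\sigma_1,\dots,\sigma_{k-1}$ on the right, none of which is bounded in terms of $\sigma_k$, $n$, $A$ alone when $\kappa_1$ is large.

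To salvage the trace-identity route you would need the sharp bound $\dot{\sigma}_k^{kk}=\sigma_{k-1}(\kappa|k)\gtrsim\kappa_1\cdots\kappa_{k-1}$ (not the crude one via Lemma~\ref{fk1}), paired with the upper bound $\sigma_{k-1}\leq C(n,k)\kappa_1\cdots\kappa_{k-1}$, which follows from $|\kappa_i|\lesssim\kappa_k$ for $i>k$ (Lemma~\ref{lemn}) after assuming w.l.o.g. $\kappa_{k-1}>A$. Dividing by $\kappa_1\cdots\kappa_{k-1}$ then gives $\kappa_k\lesssim\sigma_k/\kappa_k^{k-1}+A$, which is the clean polynomial inequality you were aiming for. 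Note, however, that the lower bound $\sigma_{k-1}(\kappa|k)\gtrsim\kappa_1\cdots\kappa_{k-1}$ requires a separate justification (e.g.\ isolating the potentially negative contributions exactly as the paper does for $\sigma_k$ itself); it is not an immediate consequence of the lemmas as stated, because Lemma~\ref{k1} for the truncated tuple $(\kappa|k)\in\Gamma_{k-1}$ only gives control of $\sigma_s$ with $s<k-1$. In short: your framework could in principle be made to work, but the specific lemmas you invoke do not support the crucial step, and as written the argument collapses into a tautology.
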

 \begin{proof}
     Assume that $\k_{k-1}>A$, otherwise $\k_k\leq A$, then we are done. Since 
     \begin{align*}
        \s_k=\sum_{1\leq i_1\leq i_2...\leq i_k\leq n}\k_{i_1}\k_{i_2}\cdots \k_{i_k}>0,
     \end{align*}
     We have 
     \begin{align*}
         \k_1\cdots\k_{k-1}\k_k\leq \s_k-C(n,k)\k_1\cdots\k_{k-1}\k_n.
     \end{align*}
     Thus
\begin{align}
          \k_k\leq & 2\f\frac{\sigma_k}{\k_1}\r^{\frac{1}{k-1}}+2C(n,k)|\k_n|\label{kk,1}\\
          \leq &2\f \frac{n\sigma_k}{\s_1}\r^{\frac{1}{k-1}}+2C(n,k)A\nonumber\\
          \leq &c(n,k)\sigma_k^{\frac{1}{k}}+C(n,k,A).\nonumber
     \end{align}
  Here we use Lemma \ref{k1} in the second inequality, the last inequality follows from Maclaurin's inequality. Thus there exists some constant $C>0$, such that $\k_k\leq C$, which depends on the bounds of $\s_k$, $n$ and $A$. 
 \end{proof}

We now state a Lemma by Lu (\cite{Lu1}*{Lemma 3.1}),
which we'll need in the proof of our Theorem \ref{main}. For convenience, we provide a concise proof here. We also point out that we will denote by $C$, $c$ or $C_i$ for $i=1,2,\dots,n$ some positive constants in subsequent, which may change line by line.
\begin{lem}\label{lu}
   Assume $\lbrace\k_i\rbrace\in\Gamma_k$ and $\k_1\geq\k_2\cdots\geq \k_n$ and there exists some constant $A>0$ such that $\k_n>-A$. For any $\epsilon>0$,  there exists a large $N:=N(\epsilon)>\max\lbrace1,C(\s_k, n, A)\rbrace$ such that if $k_1\geq N^{2^{k-1}}$, then the following inequality holds
    \begin{align}\label{lu,iq}
        -\sum_{p\neq q}\frac{\s_k^{pp,qq}\xi_p\xi_q}{\s_k}+\frac{(\sum_i\s_k^{ii}\xi_i)^2}{\s_k^2}+\sum_{i>1}\frac{\s_k^{ii}\xi_i^2}{\k_1\s_k}\geq (1-\epsilon)\frac{\xi_1^2}{\k_1^2},
    \end{align}
    where $\xi=(\xi_1,\cdots,\xi_n)$ is an arbitrary vector in $\mathbb{R}^n$.
\end{lem}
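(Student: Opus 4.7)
The plan is to prove the inequality by induction on $k\ge 2$ with a case split in the inductive step based on the size of $\kappa_2$. The main algebraic inputs are $\sigma_k^{ii}=\sigma_{k-1}(\kappa|i)$ and $\sigma_k^{pp,qq}=\sigma_{k-2}(\kappa|p,q)$ for $p\ne q$ (with $\sigma_k^{pp,qq}=0$ when $p=q$), together with the Laplace-type recursion $\sigma_k=\kappa_j\sigma_{k-1}(\kappa|j)+\sigma_k(\kappa|j)$.

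I first observe that the first two summands on the LHS combine into $-D^2(\log\sigma_k)(\xi,\xi)$, so the claim is a lower bound for the negative Hessian of $\log\sigma_k$, supplemented by the positive third summand which is there precisely to absorb cross errors. The diagonal $\xi_1^2$-coefficient equals $(\sigma_{k-1}(\kappa|1)/\sigma_k)^2$, which by the recursion is $\kappa_1^{-2}\bigl(1-\sigma_k(\kappa|1)/\sigma_k\bigr)^2$; since $\sigma_k\in(0,1)$ is fixed and $|\sigma_k(\kappa|1)|\le C(n,k,A)$ by Lemma \ref{upperboundforkappak} and the semi-convex hypothesis, choosing $N$ sufficiently large forces this coefficient to be at least $(1-\epsilon/2)/\kappa_1^2$.

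For the cross terms $\xi_1\xi_q$ ($q>1$), the contributions $-2\sigma_k^{11,qq}\xi_1\xi_q/\sigma_k$ and $2\sigma_k^{11}\sigma_k^{qq}\xi_1\xi_q/\sigma_k^2$ combine, after expanding both $\sigma_k^{qq}$ and $\sigma_k$ via the recursion along index $1$, into $2[\sigma_{k-1}(\kappa|1,q)^2-\sigma_{k-2}(\kappa|1,q)\sigma_k(\kappa|1,q)]\xi_1\xi_q/\sigma_k^2$, a Newton--Maclaurin minor. By Cauchy--Schwarz this is absorbed into the diagonal $\xi_q^2$ contributions from the third summand (of order $\sigma_k^{qq}\xi_q^2/(\kappa_1\sigma_k)$), at the cost of degrading the $\xi_1^2$-coefficient by at most $\epsilon/(3\kappa_1^2)$, using the semi-convex bounds to make the absorption quantitative.

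The remaining form in $\xi_i\xi_j$ with $i,j>1$ is dealt with by the case split. In Case A, $\kappa_2\le N^{2^{k-2}}$: Lemmas \ref{lemn} and \ref{upperboundforkappak} bound all of $\kappa_2,\dots,\kappa_n$ uniformly in terms of $n,k,\sigma,A,N$, and the form is controlled by direct estimation against the third summand. In Case B, $\kappa_2>N^{2^{k-2}}$: I apply the induction hypothesis at level $k-1$ to the truncated vector $(\kappa_2,\dots,\kappa_n)\in\Gamma_{k-1}\subset\mathbb{R}^{n-1}$ with $\kappa_2$ as the leading entry; the doubling $N^{2^{k-2}}\to N^{2^{k-1}}$ is exactly what the induction consumes. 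The main obstacle will be sign-tracking in Case A: the minors $\sigma_{k-2}(\kappa|p,q)$ need not be positive when the negative entry $\kappa_n$ lies among the retained indices, so one uses $|\kappa_n|\le A$ together with Lemma \ref{upperboundforkappak} to show that such negativity is mild and contributes only a small error.
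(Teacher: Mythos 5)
Your plan departs substantially from the paper, and two of its steps contain genuine gaps.

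First, the inductive step in Case~B is not actually set up. The inequality \eqref{lu,iq} is a statement about the quadratic form $-D^2(\log\s_k)(\xi,\xi)$ on $\R^n$ evaluated at the full vector $\k$, corrected by $\sum_{i>1}\s_k^{ii}\xi_i^2/(\k_1\s_k)$. Your inductive hypothesis at level $k-1$ applied to $(\k_2,\dots,\k_n)\in\Gamma_{k-1}\subset\R^{n-1}$ is a statement about $-D^2\bigl(\log\s_{k-1}(\k_2,\dots,\k_n)\bigr)$, an $(n-1)\times(n-1)$ form for a different function, tested against $(\xi_2,\dots,\xi_n)$ with $\k_2$ playing the role of the leading curvature. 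There is no identity that converts one into the other: the paper's chain decomposition $-\partial^2_\xi\log\s_k = -\partial^2_\xi\log q_k-\cdots-\partial^2_\xi\log q_{l+1}-\partial^2_\xi\log\s_l$ is between logarithmic Hessians of $\s_j$ and $q_j$ \emph{of the same full vector} $\k$, not between $\s_k(\k)$ and $\s_{k-1}$ of a truncation. You assert that ``the doubling $N^{2^{k-2}}\to N^{2^{k-1}}$ is exactly what the induction consumes,'' but that bookkeeping only makes sense once the reduction is established, and it is not. The paper instead pigeonholes (Proposition~\ref{l,N}) to find a level $1\le l\le k-1$ with a multiplicative gap $\k_l\ge M^2$, $\k_{l+1}\le M$, keeps working with the \emph{full} vector, passes to $\log\s_l$ via the $q_j$-chain, and then estimates term by term — this is not an induction on $k$ and has no base case to bootstrap from.

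Second, the absorption of the cross terms $\xi_1\xi_q$, $q>1$, in Case~A is not quantitatively justified. The cross coefficient is $2M_q/\s_k^2$ with $M_q=\s_{k-1}(\k|1,q)^2-\s_{k-2}(\k|1,q)\s_k(\k|1,q)$. Under Case~A the entries $\k_2,\dots,\k_n$ are bounded by $C(N)$, so $M_q=O(1)$ and $\s_k$ is a fixed constant, making the cross coefficient $O(1)$; whereas the $\xi_1^2$-coefficient from the first two terms is $(\s_k^{11}/\s_k)^2\sim 1/\k_1^2$ and the absorbing diagonal $\s_k^{qq}\xi_q^2/(\k_1\s_k)$ is at best comparable to $\xi_q^2/\k_1^2$ via Lemma~\ref{fk1}. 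The Cauchy–Schwarz budget then requires $M_q^2\k_1\lesssim (\s_k^{11})^2\s_k^{qq}\s_k\sim C/\k_1^2$, which fails for $\k_1$ large unless $M_q$ decays like $\k_1^{-3/2}$; you give no argument for that. The paper avoids this precisely by a gain that your proposal is missing: for ``small'' indices $p>l$ it shows $\dot\s_k^{pp}\ge c\,M^{(k-l)/(k-1)}\dot\s_k^{11}$ (using the Laplace identities $\dot\s_k^{11}=\k_p\ddot\s_k^{11,pp}+\s_{k-1}(\k|1,p)$, $\dot\s_k^{pp}=\k_1\ddot\s_k^{11,pp}+\s_{k-1}(\k|1,p)$ and a Maclaurin step), so the absorbing diagonal is larger by an arbitrarily big factor in $M$, and for ``large'' indices $1<i\le l$ it shows the minor itself is smaller by $M^{-2}$ because $\s_l$ is a near-product $\k_1\cdots\k_l$. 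Neither mechanism is present in your sketch.

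A smaller point: the assertion that $|\s_k(\k|1)|\le C(n,k,A)$ by Lemma~\ref{upperboundforkappak} and semi-convexity is false as stated; that lemma bounds $\k_k,\dots,\k_n$ but not $\k_2,\dots,\k_{k-1}$, and one can arrange $\k_2$ comparable to $\k_1$ with $\s_k$ constant so that $\s_k(\k|1)\to-\infty$. Your diagonal conclusion can be salvaged anyway — when $\s_k(\k|1)\le 0$ the coefficient $(1-\s_k(\k|1)/\s_k)^2/\k_1^2$ only gets larger, and when $\s_k(\k|1)>0$ the truncation lies in $\Gamma_k$ so Maclaurin makes it $o(1)$ — but the quoted justification is not correct and should be replaced.

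In short, the route you propose (induction on $k$ with a $\kappa_2$ case split, reasoning entirely at the $\s_k$ level via the Laplace recursion) is genuinely different from the paper's, but as written it has two load-bearing gaps: the inductive step is not connected to the target inequality, and the cross-term absorption lacks the $\dot\s_k^{pp}\gg\dot\s_k^{11}$ gain and the $M^{-2}$ smallness of the minors that the paper obtains from the $\s_l$ near-product structure.
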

Before proving Lemma \ref{lu}, we state the following proposition which is evident through an inductive proof and is crucial for our subsequent arguments. 
\begin{prop}\label{l,N}
    Assume $\lbrace\k_i\rbrace\in \Gamma_k$ and $\k_1\geq\k_2\cdots\geq \k_n$. For any constant $N>\max\lbrace1,C(\s_k, n, A)\rbrace$, if $\k_n>-A$ and $\k_1\geq N^{2^{k-1}}$, then there exists $1\leq l\leq k-1$ such that $\k_l\geq M^2$ and $\k_{l+1}\leq M$, where $M=N^{2^{k-l-1}}.$
\end{prop}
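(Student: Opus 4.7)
The plan is to run a greedy sweep down the ordered sequence $\kappa_1 \geq \kappa_2 \geq \cdots \geq \kappa_n$ using a chain of thresholds arranged so that each is the square of the next. Concretely, for $j = 1, \ldots, k$, set $T_j := N^{2^{k-j}}$; then $T_1 = N^{2^{k-1}}$, $T_k = N$, and the telescoping identity $T_{j-1} = T_j^2$ holds. The conclusion of the proposition is exactly the assertion that for some $1 \leq l \leq k-1$ one has $\kappa_l \geq T_l$ and $\kappa_{l+1} \leq T_{l+1}$, since then $M := T_{l+1} = N^{2^{k-l-1}}$ satisfies $M^2 = T_l$.

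I would then proceed iteratively. The hypothesis $\kappa_1 \geq N^{2^{k-1}} = T_1$ provides the base. Assume inductively that we have reached some index $j \leq k-1$ with $\kappa_j \geq T_j$. If $\kappa_{j+1} \leq T_{j+1}$, the index $l = j$ works and we stop. Otherwise $\kappa_{j+1} > T_{j+1}$ and we advance to step $j+1$ in place of $j$. This procedure either terminates with a valid $l \in \{1, \ldots, k-1\}$ or, after running through all indices, leaves us in the scenario $\kappa_j > T_j$ for every $j \leq k$, so in particular $\kappa_k > T_k = N$.

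To rule out that remaining scenario I would invoke Lemma \ref{upperboundforkappak}: under the semi-convexity hypothesis $\kappa_n > -A$, that lemma yields $\kappa_k \leq C(\sigma_k, n, A)$, and the standing requirement $N > \max\{1, C(\sigma_k, n, A)\}$ then forces $\kappa_k < N$, contradicting $\kappa_k > N$. Hence the sweep must successfully terminate at some $l \in \{1, \ldots, k-1\}$, establishing the claim.

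The argument carries no serious analytic content; the only obstacle worth attention is the bookkeeping of the squaring cascade $T_{j-1} = T_j^2$ and the verification that the terminal threshold equals precisely $N$, so that the upper bound from Lemma \ref{upperboundforkappak} matches the lower bound built into the choice of $N$ and closes off the last case by contradiction.
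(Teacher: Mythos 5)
Your proof is correct and is precisely the inductive sweep the paper has in mind—the paper itself declares Proposition~\ref{l,N} ``evident through an inductive proof'' and supplies no details, and your argument fills in exactly that gap. The threshold chain $T_j = N^{2^{k-j}}$ with $T_{j-1} = T_j^2$, $T_1 = N^{2^{k-1}}$, $T_k = N$, together with the identification $M = T_{l+1}$, $M^2 = T_l$, is the right bookkeeping; the greedy descent either halts at some $l \in \{1,\dots,k-1\}$ with $\kappa_l \geq T_l$ and $\kappa_{l+1} \leq T_{l+1}$, or it forces $\kappa_k > T_k = N > C(\sigma_k,n,A)$, which contradicts Lemma~\ref{upperboundforkappak} under the hypothesis $\kappa_n > -A$.
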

\begin{rem}
    $C(\s_k, n, A)$ in Proposition \ref{l,N} is the one derived in Lemma \ref{upperboundforkappak}. 
\end{rem}

\begin{proof}[Proof of Lemma \ref{lu}]
    We assume that there exists $M>\max \lbrace C(\s_k, n, A),1\rbrace$, such that $\k_l\geq M^2$ and $\k_{l+1}\leq M$ for some $1\leq l\leq k-1$. We note that $\log\s_l$ is concave. From Lemma \ref{con}, we have for any vector $\xi=(\xi_1,\cdots,\xi_n)$ in $\mathbb{R}^n$
    \begin{equation}\label{xi1}
   \begin{aligned}
-\partial^2_{\xi}\log\s_l=&-\partial^2_{\xi}\left(\log q_l+\log q_{l-1}+\cdots+\log q_2+\log\s_1\right)\\
=&-\frac{\partial^2_{\xi}q_l}{q_l}+(\partial_{\xi}\log q_l)^2-\frac{\partial^2_{\xi}q_{l-1}}{q_{l-1}}+(\partial_{\xi}\log q_{l-1})^2+\cdots+(\partial_{\xi}\log\s_1)^2
\geq 0
    \end{aligned}     
    \end{equation}
where we denote by $q_l=\frac{\s_l}{\s_{l-1}}$.
By direct computation, we have 
\begin{equation}\label{sl}
    \begin{aligned}
     -\partial^2_{\xi}\log\s_l=&-\frac{\partial^2_{\xi}\s_l}{\s_l}+\frac{(\partial_{\xi}\s_l)^2}{\s_l^2}\\
     =&\frac{(\dot{\s}_l^{11}\xi_1)^2}{\s_{l}^2}+\frac{2\sum_{\alpha\neq 1}(\dot{\s}_l^{11}\dot{\s}_l^{\a\a}-\s_l\ddot{\s}_l^{11,\a\a})\xi_1\xi_{\a}}{\s_{l}^2}-\partial^2_{\hat{\xi}}\log\s_l\\
     =&\frac{(\dot{\s_l}^{11}\xi_1)^2}{\s_l^2}+\frac{2\sum_{\a\neq 1}\left(\s_{l-1}(\k|1,\a)^2-\s_l(\k|1,\a)\s_{l-2}(\k|1,\a)\right)\xi_1\xi_{\a}}{\s_l^2}-\partial^2_{\hat{\xi}}\log\s_l.
    \end{aligned}
\end{equation}
Here $\hat{\xi}=(0,\xi_2,\dots,\xi_n)$. Since
\begin{equation}\label{sk,con}
 \begin{aligned}
      -\sum_{p \neq q}\frac{\s_k^{pp,qq}\xi_p\xi_q}{\s_k}+\frac{(\sum_i\s_k^{ii}\xi_i)^2}{\s_k^2}=-\partial^2_{\xi}\log\s_k=&-\partial^2_{\xi}\log q_k\cdots-\partial^2_{\xi}\log q_{l+1}-\partial^2_{\xi}\log \s_l\\
\geq&-\partial^2_{\xi}\log\s_l.
    \end{aligned}    
\end{equation}
Due to \eqref{xi1}, $-\partial^2_{\hat{\xi}}\log\s_l\geq 0$. Then by \eqref{sk,con}, we only need to estimate the first two terms of RHS in \eqref{sl}.

Combining Lemma \ref{k1} we have
\begin{align*}
\k_1\k_2\cdots\k_l\leq\s_l\leq \k_1\k_2\cdots\k_l+C\k_1\k_2\cdots\k_{l-1}\k_{l+1}\leq(1+\frac{C_1}{M})\k_1\k_2\cdots\k_l
\end{align*}
 and 
 \begin{align*}
\dot{\s_l}^{11}\geq\k_2\k_3\cdots\k_l.
 \end{align*}
 Here $C_1>0$ depends on $n,k$. Thus we have 
 \begin{align}\label{lu1}
     \frac{(\dot{\s_l}^{11}\xi_1)^2}{\s_l^2}\geq (1-C_1M^{-1})\frac{\xi_1^2}{\k_1^2}.
 \end{align}
 Meanwhile, for any  $1<i\leq l$
   \begin{align*}
    \left|\s_{l-1}(\k|1,i)^2-\s_l(\k|1,i)\s_{l-2}(\k|1,i)\right|\leq \frac{C}{M^2}\f\k_2\k_3\cdots\k_l\r^2.
 \end{align*}
 Then 
\begin{align}\label{luj}
  \left|\frac{2\left(\s_{l-1}(\k|1,i)^2-\s_l(\k|1,i)\s_{l-2}(\k|1,i)\right)\xi_1\xi_{i}}{\s_l^2}\right|\leq2\frac{C|\xi_1\xi_i|}{M^2\k_1^2} 
\end{align}
Due to Lemma \ref{fk1}, for any $1< i\leq n$ we have
\begin{align}\label{fii}
  \s_k^{ii}\geq\s_k^{11} \geq\frac{k\s_k}{n\k_1}.
\end{align}
Then by \eqref{luj} and \eqref{fii}
\begin{align}\label{lu2}
  \left|\frac{2\left(\s_{l-1}(\k|1,i)^2-\s_l(\k|1,i)\s_{l-2}(\k|1,i)\right)\xi_1\xi_{i}}{\s_l^2}\right|\leq&\frac{nC^2\xi_1^2}{kM^4\k_1^2}+\frac{k\xi_i^2}{n\k_1^2}  \nonumber\\
  \leq &C_2M^{-4}\frac{\xi_1^2}{\k_1^2}+\frac{\dot{\s}_k^{ii}\xi_i^2}{\s_k\k_1}
\end{align}
where we use the Cauchy-Schwartz inequality.

For any $p\geq l+1$, 
 \begin{align*}
    \left|\s_{l-1}(\k|1,p)^2-\s_l(\k|1,p)\s_{l-2}(\k|1,p)\right|\leq C\f\k_2\k_3\cdots\k_l\r^2.
 \end{align*}
Then
\begin{align}\label{lup}
  \left|\frac{2\left(\s_{l-1}(\k|1,p)^2-\s_l(\k|1,p)\s_{l-2}(\k|1,p)\right)\xi_1\xi_{p}}{\s_l^2}\right|\leq2\frac{C|\xi_1\xi_p|}{\k_1^2}.
\end{align}
Recall
 \begin{align}
\dot{\s_k}^{11}=\kappa_p\ddot{\s_k}^{11,pp}+\s_{k-1}(\k|1,p)
 \end{align}
 \begin{align}  \dot{\s_k}^{pp}=\kappa_1\ddot{\s_k}^{11,pp}+\s_{k-1}(\k|1,p)   
 \end{align}        
If $\s_{k-1}(\k|1,p)\leq0$, since $M>1$,
\begin{align}\label{sk-1<0}
M^{-1}\dot{\s_k}^{pp}=M^{-1}\kappa_1\ddot{\s_k}^{11,pp}+M^{-1}\s_{k-1}(\k|1,p)\geq\dot{\s_k}^{11}.
\end{align}
If $\s_{k-1}(\k|1,p)>0$, using the Maclaurin's inequality, we have 
\begin{align}\label{maciq}
    \s_{k-1}(\k|1,p)^{\frac{k-2}{k-1}}\leq\s_{k-2}(\k|1,p).
\end{align}
Since
\begin{align*}
   \s_{k-1}(\k|1,p)\leq C\k_2\k_3\cdots\k_l\k_{l+1}^{k-l}\leq CM^{-(k-l)}\k_1^{k-1},
\end{align*}
multiplying it with \eqref{maciq} we obtain
\begin{align*}
 \s_{k-1}(\k|1,p)\leq (CM^{-(k-l)})^{\frac{1}{k-1}}\k_1\ddot{\s_k}^{11,pp}.   
\end{align*}
Then
\begin{align}\label{s_k-1>0}
(M^{-1}+C^{-\frac{1}{k-1}}M^{-\frac{k-l}{k-1}})\dot{\s_k}^{pp}\geq(M^{-1}+C^{-\frac{1}{k-1}}M^{-\frac{k-l}{k-1}})\kappa_1\ddot{\s_k}^{11,pp}\geq \dot{\s_k}^{11}.
\end{align}
Using \eqref{fii}, \eqref{sk-1<0} and \eqref{s_k-1>0}, we have that for any $p\geq l+1,$
\begin{align}
    C_3M^{-\frac{k-l}{k-1}}\dot{\s_k}^{pp}\geq (M^{-1}+CM^{-\frac{k-l}{k-1}})\dot{\s_k}^{pp}\geq\frac{k\s_k}{n\k_1}
\end{align}
By using the Cauchy-Schwartz inequality, \eqref{lup} becomes
\begin{align}\label{lup2}
  \left|\frac{2\left(\s_{l-1}(\k|1,p)^2-\s_l(\k|1,p)\s_{l-2}(\k|1,p)\right)\xi_1\xi_{p}}{\s_l^2}\right|\leq&\frac{nC^2C_3\xi_1^2}{kM^{\frac{k-l}{k-1}}\k_1^2}+\frac{kM^{\frac{k-l}{k-1}}\xi_p^2}{nC_3\k_1^2}  \nonumber\\
  \leq &C_4M^{-\frac{k-l}{k-1}}\frac{\xi_1^2}{\k_1^2}+\frac{\dot{\s_k}^{pp}\xi_p^2}{\s_k\k_1}
\end{align}
Combining \eqref{lu1}, \eqref{lu2} and \eqref{lup2}, we plug \eqref{sl} into \eqref{sk,con} and obtain
 \begin{equation}\label{lu,2}
     \begin{aligned}
      -\sum_{p\neq q}\frac{\s_k^{pp,qq}\xi_p\xi_q}{\s_k}+\frac{(\sum_i\s_k^{ii}\xi_i)^2}{\s_k^2}+\sum_{i>1}\frac{\s_k^{ii}\xi_i^2}{\k_1\s_k}\geq& (1-C_1M^{-1}-C_2M^{-4}-C_4M^{-\frac{k-l}{k-1}})\frac{\xi_1^2}{\k_1^2}\\
       \geq& (1-C_5M^{-\frac{k-l}{k-1}})\frac{\xi_1^2}{\k_1^2}.
     \end{aligned}
 \end{equation}
By choosing $M\geq C_5\epsilon^{-\frac{k-1}{k-l}}$, we obtain the inequality \eqref{lu,iq}. Thus using Proposition \ref{l,N}, we may choose $N(\epsilon)=C_5\epsilon^{-\frac{k-1}{k-l}}$ and complete the proof by assuming $\k_1\geq N^{2^{k-1}}$.
\end{proof}

Now we state a crucial calculation by Huisken-Sinestrari \cite[Theorem 2.5]{HS} (see also Guan\cite{guannote}).
\begin{lem}\label{HS}
Suppose $\{\k_i\}\in \Gamma_2$. Then 
\begin{align}\label{q2}
    -\partial^2_{\xi}q_2=\sum_{i=1}^n\frac{\f\xi_i-\frac{\k_i}{\s_1}\s_1(\xi)\r^2}{\s_1}.    
\end{align}
 Suppose $\{\k_i\}\in \Gamma_{k+1}$. Let $[\xi]_i$ and $[\k]_i$ be vectors by setting the $i$th component of $\xi$ and $\k$ equal to zero respectively. Then
   \begin{align}\label{qq2}
       -\partial^2_{\xi}q_{k+1}\geq -\sum_{i=1}^n\frac{\k_i^2\partial^2_{[\xi]_i}q_k([\k]_i)}{(k+1)(q_{k;i}+\k_i)^2}.
   \end{align}
   where $q_{k+1}=\frac{\s_{k+1}}{\s_k}$ and $q_{k;i}=\frac{\s_k(\k|i)}{\s_{k-1}(\k|i)}$.
\end{lem}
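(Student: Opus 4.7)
The plan is to treat the two claims of Lemma~\ref{HS} separately.

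For the identity \eqref{q2}, I would do a direct computation. Expanding the sum on the right and using $\sum_i\k_i^2=\s_1^2-2\s_2$ reorganizes it into a combination of $\sum_i\xi_i^2$, $\sum_i\k_i\xi_i$ and $\s_1(\xi)$. On the left, applying the quotient rule twice to $q_2=\s_2/\s_1$ together with $\partial_{\xi}\s_1=\s_1(\xi)$, $\partial_{\xi}\s_2=\s_1\,\s_1(\xi)-\sum_i\k_i\xi_i$ and $\partial^2_{\xi}\s_2=\s_1(\xi)^2-\sum_i\xi_i^2$ produces exactly the same combination, and matching closes this case.

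For the inductive inequality \eqref{qq2}, the structural pivot is the representation
\[
(k+1)\,q_{k+1}(\k)\;=\;\sum_{i=1}^n\frac{\k_i\,q_{k;i}}{q_{k;i}+\k_i},
\]
which follows from Euler's identity $(k+1)\s_{k+1}=\sum_i\k_i\s_k(\k|i)$ combined with the splitting $\s_k(\k)=\s_k(\k|i)+\k_i\,\s_{k-1}(\k|i)$. I then differentiate both sides twice along the line $\k+t\xi$. Each summand has the form $a_i(t)\,b_i(t)/(a_i(t)+b_i(t))$, where $a_i(t)=\k_i+t\xi_i$ is linear and $b_i(t)=q_{k;i}([\k]_i+t[\xi]_i)$ depends only on the reduced vector $[\k]_i$. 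Because $a_i''\equiv 0$, a direct quotient-rule computation at $t=0$ separates the second derivative of each summand into a single $b_i''$-piece and a perfect-square remainder:
\[
\left.\frac{d^2}{dt^2}\left(\frac{a_i b_i}{a_i+b_i}\right)\right|_{t=0}
=\frac{\k_i^2\,\partial^2_{[\xi]_i}q_k([\k]_i)}{(q_{k;i}+\k_i)^2}
-\frac{2\f\xi_i\,q_{k;i}-\k_i\,\partial_{[\xi]_i}q_k([\k]_i)\r^2}{(q_{k;i}+\k_i)^3}.
\]
Summing over $i$, negating, dividing by $k+1$, and discarding the manifestly nonnegative subtracted term delivers \eqref{qq2}.

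The main obstacle is spotting the identity $(k+1)q_{k+1}=\sum_i\k_iq_{k;i}/(q_{k;i}+\k_i)$, since it is the algebraic step that converts a single second derivative of $q_{k+1}$ on $\R^n$ into a weighted sum of second derivatives of $q_k$ on the $(n-1)$-dimensional reduced vectors $[\k]_i$. A minor bookkeeping point is that $q_k([\k]_i)$ coincides with $q_{k;i}$ because $\s_k$ is insensitive to a zero entry, so the reduction is notationally consistent with the statement.
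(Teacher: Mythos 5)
Your proof is correct, and it is essentially the argument of Huisken--Sinestrari that the paper simply cites for this Lemma (the paper gives no proof of its own, referencing \cite{HS}*{Theorem 2.5} and Guan's notes). Your computations for \eqref{q2} check out, the identity $(k+1)q_{k+1}=\sum_i \k_i q_{k;i}/(q_{k;i}+\k_i)$ follows as you say from $(k+1)\s_{k+1}=\sum_i\k_i\s_k(\k|i)$ and $\s_k=\s_k(\k|i)+\k_i\s_{k-1}(\k|i)$, and the second-derivative formula $\left(\frac{ab}{a+b}\right)''=\frac{a^2b''}{(a+b)^2}-\frac{2(a'b-ab')^2}{(a+b)^3}$ (using $a''\equiv 0$) is right.

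One small point worth making explicit: the term you discard is a square divided by $(q_{k;i}+\k_i)^3$, so its sign is not ``manifestly'' nonnegative without checking that the denominator is positive. This does hold here, since
\begin{equation}
q_{k;i}+\k_i=\frac{\s_k(\k|i)+\k_i\s_{k-1}(\k|i)}{\s_{k-1}(\k|i)}=\frac{\s_k(\k)}{\s_{k-1}(\k|i)}>0
\end{equation}
for $\k\in\Gamma_{k+1}\subset\Gamma_k$, because $\s_k>0$ and $\s_{k-1}(\k|i)=\dot{\s}_k^{ii}>0$ on $\Gamma_k$. This is exactly where the cone hypothesis enters, so it should be recorded.
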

We note that  by \eqref{q2}, we have
\begin{align}
    -\partial^2_{\xi}q_2\geq \frac{\left|[\xi]^{\perp}\right|^2}{\s_1}
\end{align}
where we define by $[\xi]^{\perp}$ the orthogonal part of $\xi$ with respect to the vector $(\k_1,\dots,\k_n)$. More generally, we have the following lemmas that are crucial to the proof of the Theorem \ref{main}.

\begin{lem}\label{HS1}
Suppose $\{\k_i\}\in \Gamma_{l+2}$. Then
\begin{align}\label{q2corollary1-1}
   -\partial^2_\xi q_{2;i_1,\cdots,i_\ell}\geq \frac{|[\xi]_{i_1,\cdots,i_\ell}^\perp|^2}{\s_1(\k|i_1,\cdots,i_\ell)}
\end{align}
 where $[\xi]_{i_1,\cdots,i_\ell}$ denotes the vector that is obtained by setting the $i_1$th,$\cdots,$ $i_\ell$th components of $\xi$ equal to zero and $[\xi]_{i_1,\cdots,i_\ell}^\perp$ denotes orthogonal part of $[\xi]_{i_1,\cdots,i_\ell}$ with respect to $[\k]_{i_1,\cdots,i_\ell}.$
Moreover, suppose $\{\k_i\}\in \Gamma_{k+2}$. Then
\begin{align}\label{qq2corollary1}
      - \partial^2_{\xi}q_{k+1;i_1}\geq -\sum_{i=\{1,\cdots, n\}\setminus i_1}\frac{\k_i^2\partial^2_{[\xi]_{i_1,i}}q_k([\k]_{i_1,i})}{(k+1)(q_{k;i_1,i}+\k_i)^2}
   \end{align}   
\end{lem}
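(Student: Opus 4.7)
The plan is to reduce both inequalities to direct applications of Lemma~\ref{HS} by exploiting the fact that the restricted ratio $q_{m;i_1,\ldots,i_\ell}(\k)$, viewed as a function of $\k\in\R^n$, depends only on the components $\k_j$ with $j\notin\{i_1,\ldots,i_\ell\}$. Indeed, by definition
\begin{align*}
q_{m;i_1,\ldots,i_\ell}(\k)=\frac{\s_m(\k|i_1,\ldots,i_\ell)}{\s_{m-1}(\k|i_1,\ldots,i_\ell)}=q_m\bigl([\k]_{i_1,\ldots,i_\ell}\bigr),
\end{align*}
so its $\xi$-Hessian annihilates the directions $\xi_{i_1},\ldots,\xi_{i_\ell}$ and
\begin{align*}
\partial^2_\xi q_{m;i_1,\ldots,i_\ell}=\partial^2_{[\xi]_{i_1,\ldots,i_\ell}}q_m\bigl([\k]_{i_1,\ldots,i_\ell}\bigr).
\end{align*}
The task therefore reduces to applying Lemma~\ref{HS} to the reduced vector viewed in $\R^{n-\ell}$.

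For \eqref{q2corollary1-1} I would first verify that the hypothesis $\k\in\Gamma_{\ell+2}$ implies $[\k]_{i_1,\ldots,i_\ell}\in\Gamma_2$ in $\R^{n-\ell}$; this is an iterated application of the standard cone reduction $\Gamma_m\ni\k\Rightarrow [\k]_i\in\Gamma_{m-1}$, which itself follows from the identity $\s_j(\k)=\k_i\s_{j-1}(\k|i)+\s_j(\k|i)$. With this verified, identity \eqref{q2} of Lemma~\ref{HS} transplanted to the reduced vector gives
\begin{align*}
-\partial^2_\xi q_{2;i_1,\ldots,i_\ell}=\frac{1}{\s_1(\k|i_1,\ldots,i_\ell)}\sum_{i\notin\{i_1,\ldots,i_\ell\}}\Bigl(\xi_i-\frac{\k_i\,\s_1([\xi]_{i_1,\ldots,i_\ell})}{\s_1(\k|i_1,\ldots,i_\ell)}\Bigr)^2.
\end{align*}
The right-hand side equals $\bigl|[\xi]_{i_1,\ldots,i_\ell}-c\,[\k]_{i_1,\ldots,i_\ell}\bigr|^2/\s_1(\k|i_1,\ldots,i_\ell)$ for a specific scalar $c$, and the elementary inequality $|v-cw|^2\geq |v^\perp|^2$ (where $v^\perp$ denotes the orthogonal component of $v$ with respect to $w$, attained by the minimizing choice of $c$) yields \eqref{q2corollary1-1}.

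For \eqref{qq2corollary1} the same reduction applies with $\ell=1$: the hypothesis $\k\in\Gamma_{k+2}$ gives $[\k]_{i_1}\in\Gamma_{k+1}$ in $\R^{n-1}$, so the inductive Huisken-Sinestrari bound \eqref{qq2} applied to $[\k]_{i_1}$ produces a sum over its $n-1$ remaining coordinates. Upon noting the identifications $[[\xi]_{i_1}]_i=[\xi]_{i_1,i}$ and $[[\k]_{i_1}]_i=[\k]_{i_1,i}$, and that $q_{k;i}$ applied to the reduced vector is $q_{k;i_1,i}$ for the original $\k$, the resulting inequality is exactly \eqref{qq2corollary1}. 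The only non-trivial step is the Garding cone reduction indicated above; this is standard and should not be the main obstacle, so the overall proof is essentially a mechanical reindexing of Lemma~\ref{HS}.
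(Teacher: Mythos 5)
Your proposal is correct and follows essentially the same route as the paper: recognize that $q_{m;i_1,\ldots,i_\ell}$ is just $q_m$ on the reduced vector, transplant \eqref{q2} and \eqref{qq2} from Lemma~\ref{HS} to the reduced vectors, and conclude \eqref{q2corollary1-1} by the observation that $|\xi - c\k|^2 \geq |\xi^\perp|^2$ for any scalar $c$. The only addition you make over the paper's terse argument is to spell out the Garding cone reduction $\Gamma_{\ell+2} \Rightarrow [\k]_{i_1,\ldots,i_\ell}\in\Gamma_2$, which the paper leaves implicit.
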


\begin{proof}
From \eqref{q2} in Lemma \ref{HS} it follows
\begin{align}\label{q2corollary1}
   -\partial^2_\xi q_{2;i_1,\cdots,i_\ell}=\sum_{i=\{1,\cdots,n\}\setminus\{i_1,\cdots,i_\ell\}}    \frac{\f\xi_i-\frac{\k_i}{\s_1(\k|i_1,\cdots,i_\ell)}\s_1(\xi|i_1,\cdots,i_\ell)\r^2}{\s_1(\k|i_1,\cdots,i_\ell)}
\end{align}
Then \eqref{q2corollary1-1} holds by decomposing $[\xi]_{i_1,\cdots,i_\ell}$ with respect to $[\k]_{i_1,\cdots,i_\ell}$, i.e., write $[\xi]_{i_1,\cdots,i_\ell}=c[\k]_{i_1,\cdots,i_\ell}+[\xi]^\perp_{i_1,\cdots,i_\ell}$.

\eqref{qq2corollary1} follows from \eqref{qq2} .
\end{proof}

\begin{lem}\label{lem,HG}
Assume $(\k_i)\in \Gamma_k$, then 
    \begin{align}\label{sigma_k-1hessianqk}
       -\s_{k-1}\partial^2_\lambda q_k\geq C(n,k)\sum_{j=1}^{k-1}\k_1\cdots\hat{\k}_j\cdots\k_{k-1}\left|[\lambda]^\perp_{1,\cdots,\hat{j},\cdots,k-1}\right|^2.
   \end{align}
   Furthermore, denote $\lambda^l=(\lambda_{l+1},\dots\lambda_{n})$ to be the $(n-l)$-vector. Suppose that $|\k_n|>\d_0$ for some positive integer $\d_0$ and $\k_p\leq N_2$ for some $l+1\leq p\leq k-1$ (this is a condition we will assume in the main proof). We assume that $\lambda^l\perp (\k_{l+1},\dots,\k_n) $ for some $l\in (1,\cdots,k-1)$, then
   \begin{align}\label{qk2}
       -\s_{k-1}\partial^2_{\lambda}q_k\geq 
       \frac{C\d_0}{N_2^2}\k_1\k_2\cdots\k_{k-1}|\lambda^l|^2.
   \end{align}
   where $C$ depends on $n,k,l, N$.
   
\end{lem}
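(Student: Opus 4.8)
The plan is to prove \eqref{sigma_k-1hessianqk} by iterating the Huisken--Sinestrari inequality \eqref{qq2} from Lemma \ref{HS} down to the base case \eqref{q2}, and then to deduce \eqref{qk2} by a careful choice of indices and the size hypotheses on $\k_n$ and $\k_p$. First I would establish \eqref{sigma_k-1hessianqk}. Writing $q_k = \s_k/\s_{k-1}$, apply \eqref{qq2} once to bound $-\partial^2_\xi q_k$ below by a sum over $i$ of terms involving $\k_i^2 \partial^2_{[\xi]_i} q_{k-1}([\k]_i)$ divided by $(k)(q_{k-1;i}+\k_i)^2$. Iterating this $k-2$ times via \eqref{qq2corollary1} in Lemma \ref{HS1}, one arrives at a sum over ordered $(k-2)$-tuples of distinct indices $(i_1,\dots,i_{k-2})$ of terms
\[
\frac{\k_{i_1}^2 \cdots \k_{i_{k-2}}^2 \; \bigl(-\partial^2_{[\xi]_{i_1,\dots,i_{k-2}}} q_{2;\,i_1,\dots,i_{k-2}}\bigr)}{(\text{product of factors }(q_{\bullet;\bullet}+\k_\bullet)^2)}
\]
and then \eqref{q2corollary1-1} replaces the $q_2$ Hessian by $\bigl|[\xi]^\perp_{i_1,\dots,i_{k-2}}\bigr|^2 / \s_1(\k|i_1,\dots,i_{k-2})$. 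To get the stated form one discards all tuples except those of the shape $(1,2,\dots,\hat j,\dots,k-1)$ (i.e.\ the first $k-1$ indices with one omitted, which is legitimate since every summand is nonnegative when $\k\in\Gamma_k$, by the sign information in Lemma \ref{HS}), and then bounds the denominators from above and the numerators from below in terms of $\k_1,\dots,\k_{k-1}$: using $q_{m;S}+\k_i \le C(n)\s_1$-type bounds together with Lemma \ref{k1} and the fact that $\k_1\ge\cdots\ge\k_{k-1}>0$ (Lemma \ref{lemn}) to control the products $\s_{k-1}/\s_1(\k|1,\dots,\hat j,\dots,k-1)$ against $\k_1\cdots\hat\k_j\cdots\k_{k-1}$ up to a dimensional constant; multiplying through by $\s_{k-1}$ yields \eqref{sigma_k-1hessianqk}.

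For \eqref{qk2} I would specialize \eqref{sigma_k-1hessianqk} by keeping only the single term $j=l$ in the sum, giving
\[
-\s_{k-1}\partial^2_\lambda q_k \ge C(n,k)\,\k_1\cdots\k_{l-1}\k_{l+1}\cdots\k_{k-1}\,\bigl|[\lambda]^\perp_{1,\dots,\hat l,\dots,k-1}\bigr|^2,
\]
so the task reduces to bounding $\bigl|[\lambda]^\perp_{1,\dots,\hat l,\dots,k-1}\bigr|^2$ below by $(C\d_0/N_2^2)\,\k_l\,|\lambda^l|^2$, where $\lambda^l=(\lambda_{l+1},\dots,\lambda_n)$ is assumed orthogonal to $(\k_{l+1},\dots,\k_n)$. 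The vector $[\lambda]_{1,\dots,\hat l,\dots,k-1}$ has all components among $\{1,\dots,k-1\}\setminus\{l\}$ zeroed, so it is supported on the index set $\{l\}\cup\{k,k+1,\dots,n\}$; its components on $\{k,\dots,n\}$ coincide with the corresponding components of $\lambda^l$. Decomposing against $[\k]_{1,\dots,\hat l,\dots,k-1}$ (supported on the same index set), the orthogonal projection of $\lambda^l$ (viewed inside this space) picks up an error only from the $\k_l$-direction; since $\lambda^l\perp(\k_{l+1},\dots,\k_n)$, the piece of $\lambda^l$ along $\k$ restricted to $\{k,\dots,n\}$ is zero, so I expect $\bigl|[\lambda]^\perp_{1,\dots,\hat l,\dots,k-1}\bigr|^2 \ge$ (the part of $|\lambda^l|^2$ on indices $\ge k$) $\ge c|\lambda^l|^2$ after accounting for the single coordinate $\lambda_l$ via Cauchy--Schwarz and $|\k_l|\le \k_1\le C$-free estimates — here the hypothesis $\k_p\le N_2$ for some $l+1\le p\le k-1$ enters to bound $\k_l$ (hence $\k_{l+1},\dots$) and the hypothesis $|\k_n|>\d_0$ enters to keep $|[\k]_{1,\dots,\hat l,\dots,k-1}|$ comparable to $|(\k_k,\dots,\k_n)|\ge\d_0$, so the projection error is of size $O(1/\d_0)$ relative to the length, yielding the factor $\d_0/N_2^2$.

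The main obstacle will be the bookkeeping in the second part: correctly tracking how the orthogonal complement $[\lambda]^\perp_{1,\dots,\hat l,\dots,k-1}$ relates to $\lambda^l$ when the two vectors live on slightly different index sets (one includes the coordinate $l$, the other does not), and extracting the quantitative constant $C\d_0/N_2^2$ from the near-orthogonality rather than exact orthogonality. Concretely, one must show that subtracting off the $\k_l$-component does not destroy a definite fraction of $|\lambda^l|^2$, which is where the lower bound $|\k_n|>\d_0$ (forcing $|[\k]|$ bounded below) and the upper bound $\k_p\le N_2$ (controlling the relevant $\k_l$) are both genuinely used. The first part, by contrast, is a routine — if slightly lengthy — induction using only the already-stated Lemmas \ref{HS}, \ref{HS1}, \ref{k1}, and \ref{lemn}, and the nonnegativity of all the summands in the Garding cone.
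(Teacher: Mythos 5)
Your Part~1 argument for \eqref{sigma_k-1hessianqk} --- iterating \eqref{qq2}/\eqref{qq2corollary1} down to \eqref{q2corollary1-1}, then keeping only the tuples $(1,\dots,\hat j,\dots,k-1)$ and bounding $\s_{k-1}$ and $\s_1(\k|1,\dots,\hat j,\dots,k-1)$ against $\k_1\cdots\k_{k-1}$ and $\k_j$ respectively --- is exactly the paper's proof and is correct.

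There is, however, a genuine gap in your Part~2. You keep only the single term $j=l$ of \eqref{sigma_k-1hessianqk} and try to show $\bigl|[\lambda]^\perp_{1,\dots,\hat l,\dots,k-1}\bigr|^2\gtrsim|\lambda^l|^2$. But $[\lambda]_{1,\dots,\hat l,\dots,k-1}$ is supported only on the index set $\{l\}\cup\{k,\dots,n\}$, so all the components $\lambda_{l+1},\dots,\lambda_{k-1}$ of $\lambda^l$ are simply absent from it. If $l\leq k-2$ (so that $\{l+1,\dots,k-1\}\neq\emptyset$), you can take $\lambda^l$ supported on $\{l+1,\dots,k-1\}$ with $\sum_{i=l+1}^{k-1}\lambda_i\k_i=0$ (choosing $\lambda_l=\lambda_k=\dots=\lambda_n=0$); then the orthogonality hypothesis $\lambda^l\perp(\k_{l+1},\dots,\k_n)$ is satisfied, $\lambda^l\neq0$, yet $[\lambda]_{1,\dots,\hat l,\dots,k-1}=0$ and the right-hand side of your claimed inequality vanishes. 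So the single term $j=l$ cannot dominate $|\lambda^l|^2$. (Relatedly, $\k_p\leq N_2$ for $p\geq l+1$ does not ``bound $\k_l$'' --- by Proposition~\ref{l,N}, $\k_l\geq N_2^2$ is \emph{large}; the $N_2$-bound governs the indices $j\geq l+1$, which are precisely the ones your truncation discards.)

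The paper instead retains the entire block $j=l+1,\dots,k-1$, so that the bound reads $-\s_{k-1}\partial^2_\lambda q_k\geq \frac{C}{N_2}\k_1\cdots\k_{k-1}\sum_{j=l+1}^{k-1}\bigl|[\lambda]^\perp_{1,\dots,\hat j,\dots,k-1}\bigr|^2$, and then proves by a pigeonhole/contradiction argument that at least one of these terms is $\geq c_0^2|\lambda^l|^2$ with $c_0=\d_0/(3nkN_2)$: if every $\xi_p=[\lambda]^\perp_{1,\dots,\hat p,\dots,k-1}$ (for $p=l+1,\dots,k-1$) were small, the coefficients $a_p$ in $\lambda_p=\xi_p+a_p\a_p$ would all be nearly equal, forcing $\lambda^l$ to be nearly parallel to $(\k_{l+1},\dots,\k_n)$ and hence contradicting the assumed exact orthogonality. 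This is where both $|\k_n|>\d_0$ (to lower-bound $|\a_k|$) and $\k_{l+1},\dots,\k_{k-1}\leq N_2$ (to upper-bound $|\a_q|$) actually enter; the argument is global across the indices $l+1,\dots,k-1$ and cannot be replaced by inspecting the single index $l$.
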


\begin{proof}

We note that 
\begin{align}
   q_{k;i}+\k_i=\frac{\s_{k+1}^{ii}+\k_i\s_k^{ii}}{\s_k^{ii}}=\frac{\sigma_k}{\s_{k-1}(\k|i)}.
\end{align}
By \eqref{qq2}, \eqref{qq2corollary1} and \eqref{q2corollary1-1},  we have
\begin{equation}\label{qk1}
  \begin{aligned}
    -\s_{k-1}\partial^2_{\lambda}q_k\geq& -\frac{\k_{i_1}^2\partial_{[\lambda]_{i_1}}^2q_{k-1([\k]_{i_1})\s_{k-2}(\k|i_1)}}{k\frac{\s_{k-1}}{\s_{k-2}(\k|i_1)}}\\ \geq& -\frac{\k_{i_1}^2\k_{i_2}^2\partial_{[\lambda]_{i_1,i_2}}^2 q_{k-2}([\k]_{i_1,i_2})\s_{k-3}(\k|i_1,i_2)}{k(k-1)\frac{\s_{k-1}}{\s_{k-2}(\k|i_1)}\frac{\s_{k-2}(\k|i_1)}{\s_{k-3}(\k|i_1,i_2)}}\\
    \geq &\cdots\\
    \geq &-\frac{2\k_{i_1}^2\k_{i_2}^2\cdots\k_{i_{k-2}}^2\partial_{[\lambda]_{i_1,i_2,\dots,i_{k-2}}}^2 q_2([\k]_{i_1,i_2,\dots,i_{k-2})\s_1(\k|i_1,i_2,\cdots,i_{k-2})}}{k!\frac{\s_{k-1}}{\s_1(\k|i_1,i_2,\cdots,i_{k-2})}}\\
    \geq &\frac{2\k_{i_1}^2\k_{i_2}^2\cdots\k_{i_{k-2}}^2|[\lambda]_{i_1,i_2,\cdots,i_{k-2}}^\perp|^2}{k!\frac{\s_{k-1}}{\s_1(\k|i_1,i_2,\cdots,i_{k-2})}}\\
\end{aligned}  
\end{equation}
where we use \eqref{q2corollary1-1} in the last inequality.
Taking $(i_1,i_2,\cdots,i_{k-2})$ from $(1,2,\cdots,k-1)$ and denote the remaining index by $j$,
by using the facts that $\s_{k-1}\leq C(n,k)\k_1\cdots\k_{k-1}$ and $\s_1(\k|i_1,i_2,\cdots,i_{k-2})\geq \k_{j}$, we obtain
\begin{align}
     -\s_{k-1}\partial^2_{\lambda}q_k\geq \sum_{(i_1,i_2,\cdots,i_{k-2})\in (1,2,\cdots,k-1)}C(n,k)\k_{i_1}\cdots\k_{i_{k-2}}|[\lambda]^\perp_{i_1,i_2,\cdots,i_{k-2}}|^2.
\end{align}
and complete the proof of \eqref{sigma_k-1hessianqk}.

Further, if $(\lambda_{l+1},\dots\lambda_{n})\perp (\k_{l+1},\dots,\k_n) $ for some $l\in (1,\cdots,k-2)$. We obtain by \eqref{sigma_k-1hessianqk} that,
 \begin{align*}
       -\s_{k-1}\partial^2_\lambda q_k\geq C\sum_{j=l+1}^{k-1}\k_1\cdots\hat{\k}_j\cdots\k_{k-1}\left|[\lambda]^\perp_{1,\cdots,\hat{j},\cdots,k-1}\right|^2\geq \frac{C(n,k)}{N_2}\k_1\cdots\k_{k-1}\sum_{j=l+1}^{k-1}\left|[\lambda]^\perp_{1,\cdots,\hat{j},\cdots,k-1}\right|^2.
   \end{align*}

Next, we will prove by contradiction. Denote some vectors in $\R^{n-l}$ by \[\lambda=(\lambda_{l+1},\dots,\lambda_n)  ,\a=(\k_{l+1},\dots,\k_n),\a_k=(0,\dots,0,\k_k,\dots,\k_n),\lambda_k=(0,\dots,0,\lambda_k,\dots,\lambda_n),\]
\[\lambda_p=(0,\dots,0,\lambda_p,0,\dots,0,\lambda_k,\cdots,\lambda_n),\quad\xi_p=(0,\dots,0,\xi_p,0,\dots,0,\xi_k,\cdots,\xi_n),\]
and \[\a_p=(0,\dots,0,\k_p,0,\dots,0,\k_k,\dots,\k_n)\]
where $\xi_p\perp\a_p.$
We assume that $\lambda_p=\xi_p+a_p\a_p$. 

Suppose there exists $0
<c_0\ll 1$, such that $|\xi_p|<c_0|\lambda|$ for any $l+1\leq p\leq k-1$, i.e.,
\[\lambda_k-a_p\a_k\leq c_0|\lambda|.\]
Then for any $p\neq q\in (l,k)$
\begin{align}
    |(a_p-a_q)\a_k|\leq|\lambda_k-a_p\a_k|+|\lambda_k-a_q\a_k|\leq 2c_0|\lambda|.
\end{align}
Then \[|a_p-a_q|\leq \frac{2c_0|\lambda|}{|\a_k|}.\]
and \[|\lambda_q-a_p\a_q|\leq|\lambda_q-a_q\a_q|+|(a_p-a_q)\a_q|\leq c_0|\lambda|+\frac{2c_0|\lambda||\a_q|}{|\a_k|}.\]
Thus 
\begin{equation}\label{c0}
 \begin{aligned}
    |\lambda-a_p\a|\leq&\sum_{q=l+1}^{k-1}|\lambda_q-a_p\a_q|\\
    \leq &(k-1)c_0|\lambda|+\frac{2k(n-k+2)N_2c_0|\lambda|}{\d_0}\\
    < &\frac{3nkN_2c_0|\lambda|}{\d_0}
\end{aligned}   
\end{equation}
where we use Lemma \ref{lemn} in the lat inequality.
Since $\lambda\perp\a$, we have 
\begin{align}
    |\lambda-a_p\a|^2\geq |\lambda|^2.
\end{align}
Thus we complete the proof of \eqref{qk2} by only choosing $c_0=\frac{\d_0}{3nkN_2}$.
\end{proof}

\section{proof of Theorem \ref{main}}\label{sec:3}

For simplicity, in this section we only show the dependence of positive constants $\epsilon<1$, $C$, $c$, $C_i$ and $c_i$ for any $i\in \N$ on other factors, rather than on $n,k,\s_k$ or $A$, unless further specified in the following part. Keep in mind that $F=\sigma_k$ is a constant. 

Consider the auxillary function $Q=\ln\k_1-L\ln\nu^{n+1}$ with positive constant $L$ to be considered below. Since the multiplicity of $\k_1$ may be strictly larger than 1. We apply the perturbation argument (see \cite{chu}) here. Assume $Q$ achieves the maximum at the interior point $p$, we choose an orthonormal basis  $e_1(p),\dots,e_n(p)$ such that $h_{ij}=\k_i\delta_{ij}$ and 
    \begin{align*}
        \k_1\geq\k_2\geq\cdots\geq \k_n.
    \end{align*}
    Let $\gamma(s)$ be any geodesic starting at $p$ satisfying $\gamma'(0)=e_i(p)$. We construct an orthonormal frame $e_1(s), e_2(s),\dots,e_n(s)$ near $p$ by extending $e_1(p), e_2(p),\dots,e_n(p)$ parallel along $\gamma(s)$. We construct a unit vector field $v(s)$ near $p$ such that $v(0)=e_1(p)$ and $v'(0)=\sum_{p\neq 1}\frac{h_{1pi}}{\k_1+1-\k_p}e_p$. 
Consider a new function 
\begin{align}
    \tilde{Q}=\ln \f h\f v(s),v(s)\r+g\f v(s),e_1(s)\r^2-1\r-L\ln \nu^{n+1}.
\end{align}
 We note that $\tilde{Q}$ also attains its local maximum at $s=0$. Then 
    \begin{align}\label{gra}
        \frac{h_{11i}}{\k_1}=L\frac{\nu^{n+1}_i}{\nu^{n+1}},
    \end{align}
    \begin{align}\label{hess}
        \frac{h_{11ii}}{\k_1}+\sum_{p\neq 1}\frac{2h_{1pi}^2}{\k_1(\k_1+1-\k_p)}-\frac{h_{11i}^2}{\k_1^2}-\frac{L\nu^{n+1}_{ii}}{\nu^{n+1}}+L\frac{(\nu^{n+1
}_i)^2}{(\nu^{n+1})^2}\leq 0.
    \end{align}
By the Ricci identity and Codazzi identity, we have 
\begin{align}\label{ric}
    h_{11ii}=h_{ii11}-\k_i^2\k_1+\k_1^2\k_i+\k_i-\k_1\delta_{ii}.
\end{align}
Multiply \eqref{hess} by $F^{ii}$. Then 
\begin{align}\label{hess2}
  \frac{F^{ii}h_{11ii}}{\k_1}+2\sum_{p\neq 1}\frac{F^{ii}h_{1pi}^2}{\k_1(\k_1+1-\k_p)}-\frac{F^{ii}h_{11i}^2}{\k_1^2}-L\frac{F^{ii}\nu^{n+1}_{ii}}{\nu^{n+1}}+L\frac{F^{ii}(\nu^{n+1
}_i)^2}{(\nu^{n+1})^2}\leq 0 .  
\end{align}
Recall that 
\begin{align}\label{sk2}
    \s_k=C_n^k\s.
\end{align}
Differentiate \eqref{sk2} twice and obtain
\begin{align}\label{skhess}
\dot{F}^{ij}h_{ij11}+\ddot{F}^{pq,rs}h_{pq1}h_{rs1}=0.
\end{align}
We use Lemma \ref{nu,l}, plug \eqref{ric}, \eqref{skhess} into \eqref{hess2} and obtain
\begin{equation}\label{hess3}
    \begin{aligned}
  &-\frac{\ddot{F}^{pq,rs}h_{pq1}h_{rs1}}{\k_1}+2\sum_{p\neq 1}\frac{F^{ii}h_{1pi}^2}{\k_1(\k_1+1-\k_p)}-\dot{F}^{ii}\k_i^2+kF(\k_1+\frac{1}{\k_1})-\dot{F}^{ii}-\frac{F^{ii}h_{11i}^2}{\k_1^2}\\
&+L\f\dot{F}^{ii}\k_i^2+\dot{F}^{ii}\r-2L\frac{\dot{F}^{ii}u_i\nu^{n+1}_i}{u\nu^{n+1}}-kLF(\frac{1}{\nu^{n+1}}+\nu^{n+1})+L\frac{F^{ii}(\nu^{n+1
}_i)^2}{(\nu^{n+1})^2}\leq 0.
\end{aligned}
\end{equation}
By direct computation, we obtain
\begin{align}\label{mp}
    2\sum_{p\neq 1}\frac{F^{ii}h_{1pi}^2}{\k_1(\k_1+1-\k_p)}\geq 2\sum_{p\neq 1}\frac{F^{pp}h_{pp1}^2}{\k_1(\k_1+1-\k_p)}+2\sum_{i\neq 1}\frac{F^{11}h_{11i}^2}{\k_1(\k_1+1-\k_i)}.
\end{align}
Due to \cite{Ben}, we have
\begin{align}\label{org}
-\ddot{F}^{pq,rs}h_{pq1}h_{rs1}=-\ddot{F}^{pp,qq}h_{pp1}h_{qq1} +2\sum_{p>2}\ddot{F}^{pp,qq}h_{pq1}^2\geq -\ddot{F}^{pp,qq}h_{pp1}h_{qq1} + 2\sum_{p\neq 1}\frac{\dot{F}^{pp}-\dot{F}^{11}}{\k_1-\k_p} h_{11p}^2.
\end{align}
Plugging \eqref{mp} and \eqref{org} into \eqref{hess3}, we obtain
\begin{equation}\label{hess4}
    \begin{aligned}
     0\geq&-\frac{\ddot{F}^{pp,qq}h_{pp1}h_{qq1}}{\k_1}+2\sum_{p\neq 1}\frac{F^{pp}h_{pp1}^2}{\k_1(\k_1+1-\k_p)}+2\sum_{i\neq 1}\frac{\dot{F}^{ii}h_{11i}^2}{\k_1(\k_1+1-\k_i)} -\sum_{i=1}^{n}\frac{F^{ii}h_{11i}^2}{\k_1^2}\\
&+(L-1)\f\dot{F}^{ii}\k_i^2+\dot{F}^{ii}\r-2L\frac{\dot{F}^{ii}u_i\nu^{n+1}_i}{u\nu^{n+1}}-CL+L\frac{F^{ii}(\nu^{n+1
}_i)^2}{(\nu^{n+1})^2}
    \end{aligned}
\end{equation}
where we use Lemma \ref{bnu} in the next to last term of \eqref{hess3}. Recall 
\eqref{nu2}
\begin{align}
    \nabla_i\nu^{n+1}=(\nu^{n+1}-\k_i)\frac{u_i}{u}. 
\end{align}
We note that 
\begin{align}
-2L\frac{\dot{F}^{11}u_1\nu^{n+1}_1}{u\nu^{n+1}} =-2L\frac{\dot{F}^{11}u_1^2(\nu^{n+1}-\k_1)}{u^2\nu^{n+1}}>0
\end{align}
where we assume that $\k_1>1$.
Besides, since $\k_i\geq -A$, we have
\begin{align}\label{h11i}
   2\sum_{i\neq 1}\frac{\dot{F}^{ii}h_{11i}^2}{\k_1(\k_1+1-\k_i)} -\sum_{i\neq 1}^{n}\frac{F^{ii}h_{11i}^2}{\k_1^2} =\sum_{i\neq 1}\frac{\k_1-1+\k_i}{\k_1+1-\k_i}\frac{\dot{F}^{ii}h_{11i}^2}{\k_1^2}\geq \frac{1}{2} \frac{\dot{F}^{ii}h_{11i}^2}{\k_1^2}.
\end{align}
Here we assmue $\k_1\geq 3A+3$. Thus by using the Cauchy-Schwartz inequality and the critical equation \eqref{gra}, we get
\begin{equation}\label{hess5}
    \begin{aligned}
       0\geq &-\frac{\ddot{F}^{pp,qq}h_{pp1}h_{qq1}}{\k_1}+2\sum_{p\neq 1}\frac{F^{pp}h_{pp1}^2}{\k_1(\k_1+1-\k_p)}+\sum_{i\geq 2}^n\frac{\dot{F}^{ii}h_{11i}^2}{2\k_1^2}-\frac{F^{11}h_{111}^2}{\k_1^2}\\
       &+(L-1)\f\dot{F}^{ii}\k_i^2+\dot{F}^{ii}\r-\sum_{i\geq 2}^n\frac{L^2\dot{F}^{ii}(\nu^{n+1}_i)^2}{2(\nu^{n+1})^2}-\sum_{i\geq 2}^n 2\frac{\dot{F}^{ii}u_i^2}{u^2}-LC\\
       \geq &-\frac{\ddot{F}^{pp,qq}h_{pp1}h_{qq1}}{\k_1}+2\sum_{p\neq 1}\frac{F^{pp}h_{pp1}^2}{\k_1(\k_1+1-\k_p)}-\frac{F^{11}h_{111}^2}{\k_1^2}\\
       &+(L-1)\f\dot{F}^{ii}\k_i^2+\dot{F}^{ii}\r-2\sum_{i\geq  2}\dot{F}^{ii}-LC.
    \end{aligned}
\end{equation}
Similar to \eqref{h11i}, we have
\begin{equation}\label{hii1}
 \begin{aligned}
   2\sum_{i\neq 1}\frac{\dot{F}^{ii}h_{ii1}^2}{\k_1(\k_1+1-\k_i)}=&\sum_{i\neq 1}\f \frac{2}{\k_1^2}+\frac{2(\k_i-1)}{\k_1^2(\k_1+1-\k_i)}\r\dot{F}^{ii}h_{ii1}^2\\
   \geq &2\sum_{i\neq 1}^{n}\frac{F^{ii}h_{ii1}^2}{\k_1^2} -\sum_{i\neq 1}\frac{A+1}{\k_1^2(\k_1+A+1)}\dot{F}^{ii}h_{ii1}^2\\
   \geq &\f 2-\frac{C(A)}{\k_1}\r\sum_{i\neq 1}^{n}\frac{F^{ii}h_{ii1}^2}{\k_1^2}.
\end{aligned}   
\end{equation}

Choose $L=4$ in \eqref{hess5}. We obtain
\begin{align}\label{hess61}
    0\geq -\frac{\ddot{F}^{pp,qq}h_{pp1}h_{qq1}}{\k_1}+\f 2-\frac{C(A)}{\k_1}\r\sum_{p\neq 1}\frac{F^{pp}h_{pp1}^2}{\k_1^2}-\frac{F^{11}h_{111}^2}{\k_1^2}+\dot{F}^{ii}\k_i^2+\dot{F}^{ii}-C.
\end{align}
\eqref{gra} and \eqref{nu2} implies that
\begin{align}\label{h111}
     \frac{F^{11}h_{111}^2}{\k_1^2}=\frac{16\dot{F}^{11}(\nu^{n+1}-\k_1)^2}{(\nu^{n+1})^2}\frac{u_1^2}{u^2}\leq\frac{16\dot{F}^{11}\k_1^2}{a_1^2}.
\end{align}
For the quantitative relationship between curvatures, we divide it into the following cases

\textbf{\emph{Case 1} 
\begin{align}
    (n-k+1)\s_{k-1}\geq \frac{16}{a_1^2}\f\s_1\s_k-(k+1)\s_{k+1}\r
\end{align}}
This is equivalent to
\begin{align}\label{case1}
    \sum_{i=1}^n\dot{F}^{ii}\geq \frac{16}{a_1^2}\sum_{i=1}^n\dot{F}^{ii}\k_i^2.
\end{align}
Differeniating \eqref{sk2} we have
\begin{align}\label{gra2}
\sum_{i=1}^n\dot{F}^{ii}h_{ii1}=0.
    \end{align}
    Thus, combining \eqref{gra2} and \eqref{xi1}, we have 
    \begin{align}
     -\frac{\ddot{F}^{pp,qq}h_{pp1}h_{qq1}}{\k_1}= -\frac{F\ddot{(\log F)}^{pp,qq}h_{pp1}h_{qq1}}{\k_1}\geq 0
     \end{align}
Plugging \eqref{h111} and \eqref{case1} into \eqref{hess61}, we obtain
\begin{align*}
     0\geq \dot{F}^{11}\k_1^2-C 
\end{align*}
Due to Lemma \ref{fk1}, we obtain
\begin{align*}
\k_1\leq C_1 ,
\end{align*}
which completes the proof of this case.

\vskip.2cm

\textbf{\emph{Case 2} 
\begin{align}\label{sk-1}
 (n-k+1)\s_{k-1}<\frac{16}{a_1^2}\f\s_1\s_k-(k+1)\s_{k+1}\r.   
\end{align}}

\vskip.2cm

\textbf{\emph{Case 2(a)} There exists a small $\d>0$, such that \[\k_n>-\d,\quad \text{i.e.}, \quad|\k_n|<\d.\] }

Following the proof of Lemma \ref{upperboundforkappak}, we obtain by \eqref{kk,1}
 \begin{align}\label{kappakNbound}
    \k_k<C\k_1^{-\frac{1}{k-1}}+C\d.
\end{align}
 Due to Lemma \ref{lemn}, Lemma \ref{k1} and Cauchy-Schwarz inequality,
\begin{align}
    |\s_{k+1}|\leq C(n,k)\k_1\cdots\k_{k-1}\k_k^2\leq C(n,k)\s_{k-1}\k_k^2\leq C_2\f\k_1^{-\frac{2}{k-1}}+\d^2\r\s_{k-1}
\end{align}
If we choose $\d\leq \sqrt{\frac{(n-k+1)a_1^2}{48(k+1)C_2}}$ and $\k_1\geq \f\frac{(n-k+1)a_1^2}{48(k+1)C_2}\r^{-\frac{k-1}{2}}$ such that
\begin{equation}\label{d0,k1}
  \frac{16(k+1)}{a_1^2}|\s_{k+1}|\leq\frac{16(k+1)C_2}{a_1^2}\f\d^2+\k_1^{-\frac{2}{k-1}}\r\s_{k-1}\leq \frac{2(n-k+1)\s_{k-1}}{3}.  
\end{equation}
Then inserting \eqref{d0,k1} into \eqref{sk-1}, we have 
\begin{align}
    \k_1\k_2\cdots\k_{k-1}<\s_{k-1}\leq C\s_1<nC\k_1.
\end{align}
Thus $\k_2\cdots\k_{k-1}\leq C$. By \eqref{kappakNbound}, we obtain
\begin{align}
    |\s_k(\k|1)|\leq C(n,k)\k_2\cdots\k_{k-1}\k_k^2\leq C\f\k_1^{-\frac{2}{k-1}}+\d^2\r.
\end{align}
Since $\sigma_k=\kappa_1 \dot{F}^{11}+\s_k(\kappa|1)$, then
\begin{align}
\k_1\dot{F}^{11}-C_3\s_k\f\k_1^{-\frac{2}{k-1}}+\d^2\r\leq \s_k,
\end{align}

Equivalently,
\begin{align}\label{case2a}
    \dot{F}^{11}\leq \f1+C_3\f\k_1^{-\frac{2}{k-1}}+\d^2\r\r\frac{\s_k}{\k_1}
\end{align}
holds for some positive constant $C_3$ independent of $\d$. Inserting \eqref{case2a} into \eqref{hess61}, we have by using Lemma \ref{lu}
\begin{equation}\label{d0,k1,2}
 \begin{aligned}
    0\geq& (1-\epsilon)\frac{\s_k h_{111}^2}{\k_1^3}-\frac{\dot{F}^{11}h_{111}^2}{\k_1^2}+\dot{F}^{ii}\k_i^2+\dot{F}^{ii}-C\\
    \geq& \f\frac{(1-\epsilon)}{1+C_3\f\k_1^{-\frac{2}{k-1}}+\d^2\r}-1\r\frac{\dot{F}^{11}h_{111}^2}{\k_1^2}+\dot{F}^{11}\k_1^2-C\\
    \geq& \f1-\frac{16C_3\k_1^{-\frac{2}{k-1}}}{a_1^2}-\frac{16C_3\d^2}{a_1^2}-\frac{16\epsilon}{a_1^2}\r\dot{F}^{11}\k_1^2-C
\end{aligned}   
\end{equation}
where we use \eqref{h111} in the last inequality.
Then for the same reason in case 1, we have $\k_1<C_2$ by choosing $\k_1\geq \f\frac{a_1^2}{64C_3}\r^{-\frac{k-1}{2}}$, $\d\leq \sqrt{\frac{a_1^2}{64C_3}}$ and $\epsilon\leq\frac{a_1^2}{64}$. Using Lemma \ref{lu}, we obtain that there exists $N(\epsilon)>0$ only depending on $\epsilon$ such that if $\k_1\geq N^{2^{k-1}}$, then the first inequality in \eqref{d0,k1,2} holds.

 Denote by $\d_0=\min\lbrace \frac{a_1^2}{64C_1}, \sqrt{\frac{(n-k+1)a_1^2}{32(k+1)C_3}}\rbrace$ only depending on $n,k,\s_k$ and $a_1$. We conclude that combining \eqref{d0,k1} and \eqref{d0,k1,2}, for any $\d\leq \d_0$, if $\k_n\geq-\d$, then we have $\k_1\leq \max\lbrace\f\frac{(n-k+1)a_1^2}{48(k+1)C_2}\r^{-\frac{k-1}{2}}, \f\frac{a_1^2}{64C_3}\r^{-\frac{k-1}{2}}, C_2\rbrace$ independent of $\d_0$.

\vskip.3cm

\textbf{\emph{Case 2(b)}  $$-A<\k_n<-\d_0,\ \text{i.e.,}\ \d_0<|\k_n|<A.$$ }
\vskip.1cm

    We first prepare some estimates under the case $2(b)$ that will be beneficial to the  subsequent proof. As the positive constant $\d_0$ in this case only depends on $n,k,\s_k$, unless explicitly stated we ignore the dependence of constant $C$, $c$, $C_i$ and $c_i$ for any $i\in \N$ on $\d_0$ in the following discussions.

\begin{prop}\label{betasigmak-1kappa1}
   Denote by $\b=(0,\dot{F}^{22},\dots,\dot{F}^{nn})$. Then
\begin{align}\label{betalowerbound1}
|\b|\geq c_1\sigma_{k-1}
\end{align} and 
\begin{align}\label{sigmak-1lowerbound1}
\sigma_{k-1}\geq C(n,k,\d_0)\k_1.
\end{align}
 
\end{prop}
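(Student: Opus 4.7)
The plan is to prove the two inequalities separately; only the second one requires the Case 2(b) hypothesis $|\k_n|>\d_0$.

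For the bound $|\b|\geq c_1\s_{k-1}$, the approach is to combine two facts already recorded in Section \ref{sec:2}: Lemma \ref{lem'} tells us that $\dot{F}^{ii}=\s_{k-1}(\k|i)$ is non-decreasing in $i$, so $\dot{F}^{nn}$ is the largest of the $\dot{F}^{ii}$, and Lemma \ref{f11k12} gives the trace identity $\sum_{i}\dot{F}^{ii}=(n-k+1)\s_{k-1}$. Since $\dot{F}^{nn}$ is the maximum of $n$ non-negative quantities with prescribed sum, one has $\dot{F}^{nn}\geq \frac{n-k+1}{n}\s_{k-1}$. Because $\dot{F}^{nn}$ appears as an entry of $\b$, this gives
\begin{align*}
|\b|\;\geq\; \dot{F}^{nn}\;\geq\; \tfrac{n-k+1}{n}\s_{k-1},
\end{align*}
so one may take $c_1=(n-k+1)/n$, a constant depending only on $n,k$.

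For the bound $\s_{k-1}\geq C(n,k,\d_0)\k_1$, the plan is to first extract a quantitative lower bound $\k_k>c(n,k,\d_0)>0$ from the Case 2(b) hypothesis, and then invoke the structural inequality of Lemma \ref{k1}. For the lower bound on $\k_k$, observe that $\k\in\Gamma_k$ implies that the truncation $(\k_k,\k_{k+1},\dots,\k_n)$ lies in the cone $\Gamma_1$ on $\R^{n-k+1}$ (removing the top $k-1$ entries), so $\k_k+\k_{k+1}+\cdots+\k_n>0$; isolating the negative contribution of $\k_n$ gives $\k_k+\cdots+\k_{n-1}>|\k_n|>\d_0$, and because each of the $n-k$ terms on the left is at most $\k_k$, we conclude $\k_k>\d_0/(n-k)$. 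This is the same kind of estimate that underlies Lemma \ref{lemn}. Consequently $\k_i\geq \k_k>\d_0/(n-k)$ for all $1\leq i\leq k$, and Lemma \ref{k1} applied to $\k\in\Gamma_k$ yields
\begin{align*}
\s_{k-1}\;>\;\k_1\k_2\cdots\k_{k-1}\;\geq\; \k_1\left(\frac{\d_0}{n-k}\right)^{k-2},
\end{align*}
which is exactly the desired bound with $C(n,k,\d_0)=(\d_0/(n-k))^{k-2}$.

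No step is technically demanding; the main subtlety is recognizing that the lower bound on $|\k_n|$ propagates to a lower bound on $\k_k$ of the same order in $\d_0$ via the $\Gamma_1$ property of the truncated vector. Everything else is a direct consequence of the monotonicity of $\dot{F}^{ii}$, the trace identity, and the Newton-type inequality $\s_{k-1}>\k_1\cdots\k_{k-1}$ already collected in Section \ref{sec:2}.
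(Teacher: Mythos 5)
Your proposal is correct and follows essentially the same route as the paper: both rely on the trace identity $\sum_i\dot F^{ii}=(n-k+1)\s_{k-1}$, the monotonicity $\dot F^{11}\le\cdots\le\dot F^{nn}$, the Newton-type bound $\s_{k-1}>\k_1\cdots\k_{k-1}$, and the observation $\k_k>\d_0/(n-k)$ coming from $\k_k+\cdots+\k_n>0$ and $|\k_n|>\d_0$. The only cosmetic difference is in the first bound, where you take $|\b|\ge\dot F^{nn}\ge\frac1n\sum_i\dot F^{ii}$ while the paper instead bounds $|\b|$ from below by $\frac{1}{\sqrt{n-1}}\sum_{p\ge 2}\dot F^{pp}$ via Cauchy--Schwarz and then discards $\dot F^{11}$; both yield a constant $c_1$ depending only on $n,k$.
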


\begin{proof}
    By using the Cauchy-Schwartz inequality we obtain
\begin{equation}\begin{aligned}\label{betaupperbound-1}
|\b|&\geq \frac{\sum_{p=2}^n\dot{F}^{pp}}{\sqrt{n-1}}\geq \frac{(n-k+1)\sqrt{n-1}}{n}\s_{k-1}\\ &\geq \frac{(n-k+1)\sqrt{n-1}}{n}\k_1\cdots\k_{k-1}\geq \frac{(n-k+1)\sqrt{n-1}}{n}\k_1 \k_{k}^{k-2}\\&\geq \frac{(n-k+1)\sqrt{n-1}}{n}\f\frac{\d_0}{(n-k)}\r^{k-2}\k_1
\end{aligned}
\end{equation}
where we use Lemma \ref{lem'} in the second inequality, Lemma \ref{k1} in the third inequality and Lemma \ref{lemn} in the last inequality.
\end{proof}

\begin{prop}\label{f11est}
\begin{align} 
   \s_{k-1}(\k|1)\leq C\frac{\s_{k-1}}{\k_1^2}.
\end{align}
\end{prop}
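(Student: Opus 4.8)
The plan is to exploit, in Case 2(b), the uniform control on the small curvatures $\k_k,\dots,\k_n$ together with the fact that $\s_k$ is a fixed constant, by expanding $\s_k$ along the $\k_1$-direction. First I would record the two inputs. Since $\k_n>-A$, Lemma \ref{upperboundforkappak} gives $\k_k\leq C_0$ with $C_0=C_0(n,k,\s_k,A)$, hence $|\k_j|\leq C_0$ for all $j\geq k$; and since $|\k_n|>\d_0$ in this case, Lemma \ref{lemn} gives $\k_k>\d_0/(n-k)=:c_0>0$, so that $\k_2\geq\cdots\geq\k_{k-1}\geq\k_k\geq c_0$. Thus in Case 2(b) the curvatures $\k_k,\dots,\k_n$ lie in a fixed compact interval, while $\k_2,\dots,\k_{k-1}$ are bounded below by a fixed positive constant.

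Next I would write $\s_k=\k_1\,\s_{k-1}(\k|1)+\s_k(\k|1)$, and note that $\s_{k-1}(\k|1)=\dot{\s}_k^{11}\geq0$ by Lemma \ref{lem'}, so that $\k_1\,\s_{k-1}(\k|1)=\s_k-\s_k(\k|1)\leq\s_k+|\s_k(\k|1)|$. The heart of the matter is the estimate $|\s_k(\k|1)|\leq C\,\k_2\cdots\k_{k-1}$ with $C=C(n,k,\s_k,A,\d_0)$: each monomial of $\s_k(\k|1)=\s_k(\k_2,\dots,\k_n)$ is a product of $k$ of the variables $\k_2,\dots,\k_n$, and since only $k-2$ of the indices lie in $\{2,\dots,k-1\}$, every monomial carries at least two factors of index $\geq k$, each bounded by $C_0$ in absolute value, the remaining factors being a sub-product of $\k_2\cdots\k_{k-1}$ from which at most $k-2$ factors — each $\geq c_0$ — have been dropped; so each monomial is at most $C\,\k_2\cdots\k_{k-1}$, and summing the $C_{n-1}^{k}$ of them gives the bound. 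Combining this with $\s_{k-1}>\k_1\k_2\cdots\k_{k-1}$ (Lemma \ref{k1}), hence $\k_2\cdots\k_{k-1}\leq\s_{k-1}/\k_1$, I get $\k_1\,\s_{k-1}(\k|1)\leq\s_k+C\,\s_{k-1}/\k_1$.

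Finally I would absorb the constant term via the lower bound \eqref{sigmak-1lowerbound1}, $\s_{k-1}\geq C(n,k,\d_0)\,\k_1$, which gives $\s_k\leq\big(\s_k/C(n,k,\d_0)\big)\,\s_{k-1}/\k_1$, hence $\k_1\,\s_{k-1}(\k|1)\leq C\,\s_{k-1}/\k_1$; dividing by $\k_1$ yields $\s_{k-1}(\k|1)\leq C\,\s_{k-1}/\k_1^2$, which is the assertion. The step I expect to require the most care is the monomial count behind $|\s_k(\k|1)|\leq C\,\k_2\cdots\k_{k-1}$: one must keep track of which factors can be absorbed into constants and check that ``dividing out'' the omitted large curvatures $\k_i$ is legitimate, which is exactly where the Case 2(b) bound $\k_i\geq c_0$ — rather than merely $\k_i>0$ — is used; beyond that the proof is a short chain of elementary inequalities.
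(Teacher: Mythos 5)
Your proof is correct and follows the same route as the paper: both expand $\s_k=\k_1\s_{k-1}(\k|1)+\s_k(\k|1)$, bound $|\s_k(\k|1)|$ by a monomial count using the Case 2(b) bounds $\k_k\geq\d_0/(n-k)$ and $\k_k\leq C(A)$, compare with $\s_{k-1}\geq\k_1\cdots\k_{k-1}$ via Lemma \ref{k1}, and absorb the constant $\s_k$ via \eqref{sigmak-1lowerbound1}. You are simply more explicit about the monomial-counting step (the paper writes the intermediate bound $|\s_k(\k|1)|\leq C\k_2\cdots\k_{k-1}\k_k^2$ and then uses Lemma \ref{upperboundforkappak}), but the argument is the same.
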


\begin{proof}
     Recall that 
 \begin{align}\label{sk}
 \s_{k}=\k_1\s_{k-1}(\k|1)+\s_{k}(\k|1)    
 \end{align}
Since by Lemma \ref{lemn} and Lemma \ref{upperboundforkappak},
\begin{align}
    |\s_{k}(\k|1)|\leq C(n,k)\k_2\cdots\k_{k-1}\k_k^2\leq C(n,k,A)\frac{\s_{k-1}}{\k_1}.
\end{align}
Then we have 
\begin{align}
  |\s_{k-1}(\k|1)|=\left|\frac{\s_{k}}{\k_1}-\frac{\s_{k}(\k|1)}{\k_1}\right|\leq  \frac{\s_k}{\k_1}+ C(n,k,A)\frac{\s_{k-1}}{\k_1^2}.
\end{align}
 We complete the proof of Proposition \ref{f11est} by using \eqref{sigmak-1lowerbound1}.
\end{proof}

 \begin{prop}\label{k1f11,f}
   If $\k_1>\frac{(n-k)^{k-1}\s_k}{\d_0^{k-1}}$, then
\begin{align*} 
   \s_k(\k|1)\leq 0.
\end{align*}
In other words,
\begin{align*}
    \k_1\dot{F}^{11}\geq\s_k.
\end{align*}
\end{prop}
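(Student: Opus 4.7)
The two forms of the conclusion are equivalent via the identity $\s_k=\k_1\s_{k-1}(\k|1)+\s_k(\k|1)$ together with $\dot{F}^{11}=\s_{k-1}(\k|1)$, so it is enough to prove $\s_k(\k|1)\le 0$. My plan is to argue this by contradiction, exploiting the fact that restriction preserves membership in the Garding cone. From the Case 2(b) assumption $|\k_n|>\d_0$, Lemma \ref{lemn} gives $\k_k>\d_0/(n-k)$, and then the monotone ordering forces $\k_2\ge\k_3\ge\cdots\ge\k_k>\d_0/(n-k)$, so that $\k_2\k_3\cdots\k_k\ge\bigl(\d_0/(n-k)\bigr)^{k-1}$.

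Assume for contradiction that $\s_k(\k|1)>0$. Because $\k\in\Gamma_k$, the identity $\s_{j+1}(\k)=\k_1\s_j(\k|1)+\s_{j+1}(\k|1)$ shows $\s_j(\k|1)=\partial_{\k_1}\s_{j+1}(\k)>0$ for every $1\le j\le k-1$, via Lemma \ref{lem'} applied inside each $\Gamma_{j+1}\supseteq\Gamma_k$. Combined with the contradiction hypothesis $\s_k(\k|1)>0$, this places the $(n-1)$-tuple $\k|1=(\k_2,\dots,\k_n)$ in $\Gamma_k$. I then apply Lemma \ref{k1} to $\k|1\in\Gamma_k$ at the critical index $s=k-1<k$ to obtain $\s_{k-1}(\k|1)>\k_2\k_3\cdots\k_k\ge\bigl(\d_0/(n-k)\bigr)^{k-1}$. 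Multiplying by $\k_1$ and invoking the standing hypothesis $\k_1>(n-k)^{k-1}\s_k/\d_0^{k-1}$ gives $\k_1\s_{k-1}(\k|1)>\s_k$, whence $\s_k(\k|1)=\s_k-\k_1\s_{k-1}(\k|1)<0$, contradicting our assumption; consequently $\s_k(\k|1)\le 0$, and the second form follows at once.

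The main technical point is the promotion step: one must first verify $\k|1\in\Gamma_{k-1}$ (the Garding-cone restriction principle, implemented here through the partial-derivative identity plus Lemma \ref{lem'}), so that the extra hypothesis $\s_k(\k|1)>0$ is precisely what lifts $\k|1$ into the richer cone $\Gamma_k$ and unlocks Lemma \ref{k1} at the borderline index $s=k-1$, which is not available merely from $\Gamma_{k-1}$-membership. Once that promotion is in hand, the remainder is a direct inequality chain driven by the quantitative lower bound $\k_k>\d_0/(n-k)$ and the largeness hypothesis on $\k_1$.
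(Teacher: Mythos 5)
Your proof is correct and follows essentially the same route as the paper: assume $\s_k(\k|1)>0$, deduce $\s_{k-1}(\k|1)>\k_2\cdots\k_k\geq\bigl(\d_0/(n-k)\bigr)^{k-1}$, multiply by $\k_1$, and obtain a contradiction with $\s_k=\k_1\s_{k-1}(\k|1)+\s_k(\k|1)$. The only difference is that you spell out the promotion step ($\k|1\in\Gamma_{k-1}$ a priori, lifted to $\Gamma_k$ by the contradiction hypothesis, so Lemma \ref{k1} may be applied to the truncated vector at $s=k-1$), a justification the paper leaves implicit when it invokes Lemma \ref{k1} to get $\dot{F}^{11}>\k_2\cdots\k_k$.
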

\begin{proof}
We prove it by contradiction. Assume that
\begin{align}\label{sk1,c}
   \s_k(\k|1)>0. 
\end{align}

Then from Lemma \ref{k1}, we have 
\[F^{11}>\k_2\k_3\cdots\k_k\] and 
\begin{align}\label{k1f11,c}
\k_1\dot{F}^{11}>\k_1\k_2\cdots\k_k\geq \left(\frac{\d_0}{n-k}\right)^{k-1}\k_1
\end{align}
Here we use Lemma \ref{lemn} and the assumption $|\k_n|>\d_0$ in this case.
Inserting \eqref{sk1,c} and \eqref{k1f11,c} into \eqref{sk}, we derive a contradiction by assuming $\k_1>\frac{(n-k)^{k-1}\s_k}{\d_0^{k-1}}.$
\end{proof}

\textbf{Main claim:} when $\k_1$ is large enough, the first three terms in \eqref{hess61} satisfy
\begin{align}\label{claim}
-\frac{\ddot{F}^{pp,qq}h_{pp1}h_{qq1}}{\k_1}+\f 2-\frac{C(A)}{\k_1}\r\sum_{p\neq 1}\frac{\dot{F}^{pp}h_{pp1}^2}{\k_1^2}- \frac{\dot{F}^{11}h_{111}^2}{\k_1^2}\geq 0.
\end{align}
Then \eqref{hess61} implies that there exists some positive constant $C_{3}$ depending only on $n,k,\s_k$ such that $\k_1\leq C_{3}$. Our main goal is to prove this claim.

Applying Lemma \ref{l,N}, in the subsequent context we suppose that there exists an integer $1\leq \ell\leq k-1$ for which $\k_{\ell}\geq N_2^2$ and $\k_{\ell+1}\leq N_2$.
\begin{prop}
 Suppose that $1\leq \ell\leq k-1$. Denote $\xi=(h_{111},h_{221},\dots,h_{nn1})$ and recall $\k=(\k_1,\k_2,\dots,\k_n)$ and  $\b=(0,\dot{F}^{22},\dots,\dot{F}^{nn})$. If $\k_1$ is large, $\xi$ can be uniquely written as 
\begin{align}\label{xi}
    \xi=(1+a)\frac{h_{111}}{\k_1}\k-ah_{111}e_1+\frac{b\b}{|\b|^2}+(0,\lambda_2,\dots,\lambda_{l+1},\dots,\lambda_n)
\end{align}
where $e_1=(1,0,\dots,0)$, 
\begin{align}\label{perp}
(\lambda_{l+1},\dots\lambda_{n})\perp (\k_{l+1},\dots,\k_n)    
\end{align}
and 
\begin{align}\label{lambda}
\sum_{\a=2}^n\dot{F}^{\a\a}\lambda_{\a}=0.
\end{align}  
Besides,
\begin{align}\label{b}
    b=\f a-(1+a)\frac{kF}{\k_1\dot{F}^{11}}\r\dot{F}^{11}h_{111}.
\end{align}
\end{prop}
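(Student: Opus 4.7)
The plan is to treat the claimed decomposition \eqref{xi} coordinate by coordinate and use the two linear constraints \eqref{perp} and \eqref{lambda} to solve for the unknowns $a$, $b$, and $(\lambda_2,\dots,\lambda_n)$, then verify that the resulting linear system is nonsingular. First observe that the first coordinate of the right-hand side of \eqref{xi} reads $(1+a)h_{111}-ah_{111}=h_{111}$, which equals $\xi_1$ identically in $a$; hence the first coordinate imposes no constraint. For $i\geq 2$, matching the $i$th coordinate gives
\begin{align*}
\lambda_i = h_{ii1} - (1+a)\frac{h_{111}}{\k_1}\k_i - \frac{b\,\dot{F}^{ii}}{|\b|^2},
\end{align*}
so once $a$ and $b$ are fixed, all the $\lambda_i$ are determined.

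Next, I would impose the orthogonality condition \eqref{lambda}, $\sum_{\a=2}^n\dot{F}^{\a\a}\lambda_\a=0$. Substituting the formula above and using three ingredients --- (i) the critical equation \eqref{gra2}, which yields $\sum_{\a\geq 2}\dot{F}^{\a\a}h_{\a\a 1}=-\dot{F}^{11}h_{111}$; (ii) Euler's identity for the $k$-homogeneous function $\s_k$, namely $\sum_\a \dot{F}^{\a\a}\k_\a=kF$, hence $\sum_{\a\geq 2}\dot{F}^{\a\a}\k_\a=kF-\dot{F}^{11}\k_1$; and (iii) $\sum_{\a\geq 2}(\dot{F}^{\a\a})^2=|\b|^2$ --- produces a single linear equation that solves $b$ in terms of $a$. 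A short algebraic simplification collapses it to exactly the claimed formula \eqref{b}, which is the main explicit output of the proposition.

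The parameter $a$ is then pinned down by the remaining orthogonality \eqref{perp}, $\sum_{i=l+1}^n \lambda_i\,\k_i=0$. Substituting the expression for $\lambda_i$ (with $b$ already rewritten in terms of $a$) yields a scalar linear equation $Pa=Q$, whose coefficient $P$ is built from $\sum_{i=l+1}^n\k_i^2$ and, via $b$, the inner product $\sum_{i=l+1}^n\dot{F}^{ii}\k_i$. The hypothesis that $\k_1$ be large enters exactly to guarantee $P\neq 0$, so that $a$ is uniquely solvable; here the assumptions $|\k_n|>\d_0$ and $\k_p\leq N_2$ for $l+1\leq p\leq k-1$, together with Lemma \ref{lemn}, Lemma \ref{lem'}, and the lower bound $\s_{k-1}\geq C\k_1$ supplied by Proposition \ref{betasigmak-1kappa1}, should rule out cancellation in the leading terms.

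The main obstacle, as I view it, is this final nondegeneracy check on $P$; the rest of the argument is essentially linear-algebra bookkeeping built on just two facts --- the gradient equation \eqref{gra2} and Euler's identity for homogeneous symmetric functions. Once $P\neq 0$ is established, existence and uniqueness of $(a,b,\lambda)$ are automatic, and the formula \eqref{b} follows as a direct consequence of the second orthogonality constraint.
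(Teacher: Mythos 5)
Your strategy is exactly the paper's: take the inner product of \eqref{xi} with $(\dot{F}^{11},\dots,\dot{F}^{nn})$ (using \eqref{gra2} and Euler's identity) and with $\k^\ell=(\k_{\ell+1},\dots,\k_n)$ (using \eqref{perp}), and solve the resulting $2\times 2$ linear system for $(a,b)$. Your derivation of \eqref{b} from the first constraint is correct and matches the paper's equation \eqref{ab1}. But you flag the invertibility check as ``the main obstacle'' and do not actually carry it out, and the ingredients you list for it are not the ones that make it work. Lemma \ref{lemn}, Lemma \ref{lem'}, and $\s_{k-1}\geq C\k_1$ by themselves do not control the coefficient $P$. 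The paper's determinant computation reduces to showing
\[
\Bigl|\bigl(kF-\k_1\dot{F}^{11}\bigr)\textstyle\sum_{p=\ell+1}^n\dot{F}^{pp}\k_p\Bigr| < |\b|^2\textstyle\sum_{p=\ell+1}^n\k_p^2,
\]
and the crucial ingredient is Proposition \ref{f11est}, which yields $|kF-\k_1\dot{F}^{11}|=k|\s_{k-1}(\k|1)|\leq C\s_{k-1}/\k_1$, together with $|\dot{F}^{pp}\k_p|\leq C\s_{k-1}$ (from Lemma \ref{f11k12}) and the lower bound $|\b|\geq c_1\s_{k-1}$ from Proposition \ref{betasigmak-1kappa1}. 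With these, the left side is $O(\s_{k-1}^2/\k_1)$ while the right side is $\gtrsim \s_{k-1}^2\d_0^2$, so large $\k_1$ wins. Without Proposition \ref{f11est} you have no smallness on $kF-\k_1\dot{F}^{11}$, and ``ruling out cancellation'' is not automatic. So the architecture of your argument is right and parallels the paper's proof, but the one step you yourself identify as the substantive one is left open, and your list of tools for closing it is incomplete.
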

\begin{proof}
    We multiply \eqref{xi} by $\lbrace\dot{F}^{ii}\rbrace$ and obtain using \eqref{gra2}
\begin{align}\label{ab1}
    (1+a)\frac{kF}{\k_1}h_{111}-a\dot{F}^{11}h_{111}+b=0.
\end{align}
Denote by $\k^l=(\k_{l+1},\dots,\k_n)$. We multiply \eqref{xi} by $\k^l$ and obtain using \eqref{perp}
\begin{align}\label{ab2}
    (1+a)\frac{h_{111}}{\k_1}\sum_{p=l+1}^n\k_p^2+b\sum_{p=l+1}^n\frac{\dot{F}^{pp}\k_p}{|\b|^2}=\metric{\xi}{\k^l}.
\end{align}
We only need to verify that the following matrix is invertible,
$$
\f
\begin{array}{cc}
   kF-\k_1\dot{F}^{11} & 1 \\
    \sum_{p=l+1}^n\k_p^2 & \frac{\sum_{p=l+1}^n\dot{F}^{pp}\k_p}{|\b|^2}
\end{array}
\r
$$
i.e.
\begin{equation}
    \f kF-\k_1\dot{F}^{11}\r\sum_{p=l+1}^n\dot{F}^{pp}\k_p-|\b|^2 \sum_{p=l+1}^n\k_p^2\neq 0.
\end{equation}
Combining \eqref{sigmak-1lowerbound1} and Proposition \ref{f11est}, we have 
\begin{equation}
   |kF-\k_1\dot{F}^{11}|\leq C\frac{\s_{k-1}}{\k_1}.
\end{equation}
Besides, by Lemma \ref{f11k12}
\begin{align}\label{det,k1}
|\dot{F}^{pp}\k_p|\leq|\s_k|+|\s_k(\k|p)|\leq C\s_{k-1}.
\end{align}

Due to \eqref{betalowerbound1} and $|\k_n|\geq \d_0$, we have 
\begin{align}
    \left|\f kF-\k_1\dot{F}^{11}\r\sum_{p=l+1}^n\dot{F}^{pp}\k_p\right|\leq C_1\frac{\s_{k-1}^2}{\k_1}<|\b|^2 \sum_{p=l+1}^n\k_p^2.
\end{align}

by choosing $\k_1>\frac{C_1}{c_1^2\d_0^2}$, where $c_1$ is derived in Proposition \ref{betasigmak-1kappa1}.
Further, by \eqref{ab1} we have 
\begin{align}
    b=\f a-(1+a)\frac{kF}{\k_1\dot{F}^{11}}\r\dot{F}^{11}h_{111}.
\end{align}
\end{proof}

We replace $h_{pp1}$ by $\xi_p$ in \eqref{claim} and obtain by using \eqref{xi}
\begin{equation}\label{hess6}
  \begin{aligned}
      -\ddot{F}^{pp,qq}\xi_p\xi_q+\f 2-\frac{C(A)}{\k_1}\r\sum_{p\neq 1}\frac{F^{pp}\xi_p^2}{\k_1}\geq & \underbrace{(1+a)^2\frac{h_{111}^2}{\k_1^2}\ddot{F}^{pp,qq}\k_p\k_q-2\frac{h_{111}}{\k_1}\ddot{F}^{pp,qq}\k_p\xi_q}_{I}\\
&\underbrace{+2ah_{111}\frac{b}{|\b|^2}\sum_{p=2}^n\ddot{F}^{11pp}\dot{F}^{pp}}_{II}+\underbrace{2ah_{111}\sum_{\gamma=2}^n\ddot{F}^{11\gamma\gamma}\lambda_{\gamma}}_{III}\\
&\underbrace{-\frac{b^2}{|\b|^4}\sum_{p,q\geq2}\dot{F}^{pp}\ddot{F}^{ppqq}\dot{F}^{qq}}_{IV}\underbrace{-2\frac{b}{|\b|^2}\sum_{p,\gamma\geq2}\dot{F}^{pp}\ddot{F}^{pp\gamma\gamma}\lambda_{\gamma}}_{V}\\
&\underbrace{-\sum_{\gamma,\g\geq2}\lambda_{\gamma}\ddot{F}^{\gamma\gamma\g\g}\lambda_{\g}}_{VI}\\
&\underbrace{+2\f1-\frac{C(A)}{\k_1}\r\sum_{p\neq 1}\frac{F^{pp}\xi_p^2}{\k_1}}_{VII}.
  \end{aligned}  
\end{equation}

Now we estimate terms in the right-hand side of \eqref{hess6}. 

For $I$. We have
 \begin{align}\label{a1}
     I=(1+a)^2\frac{h_{111}^2}{\k_1^2}\ddot{F}^{pp,qq}\k_p\k_q-2\frac{h_{111}}{\k_1}\ddot{F}^{pp,qq}\k_p\xi_q=(1+a)^2\frac{k(k-1)Fh_{111}^2}{\k_1^2}
 \end{align}
where we use \eqref{gra2}.

For $IV$.
Since $\k_{k-1}\geq \k_k\geq \frac{1}{n-k}|\k_n|\geq\frac{\d_0}{(n-k)}$, for any $p,q\geq 2$
\begin{align}\label{fppqqupperbound}
|\ddot{F}^{pp,qq}|\leq C(n,k)\k_1\cdots\k_{k-2}\leq C(n,k,\d_0)\sigma_{k-1}.
\end{align}
Furthermore, by Lemma \ref{fk1} and Proposition \ref{f11est}, employing \eqref{b} we have 
\begin{equation}
\begin{aligned}\label{d}
  IV= -\frac{b^2}{|\b|^4}\sum_{p,q\geq2}\dot{F}^{pp}\ddot{F}^{ppqq}\dot{F}^{qq}&\geq -C(n,k,\d_0)\f a-(1+a)\frac{kF}{\k_1\dot{F}^{11}}\r^2\frac{\f\dot{F}^{11}h_{111}\r^2}{|\beta|}\\&\geq- C(n,k,\d_0,A)(a^2+1)\frac{\dot{F}^{11}h_{111}^2}{\k_1^2}
\end{aligned}
\end{equation}

For the rest of terms, we divide the proof of the \textbf{Main claim} into three parts.

\subsection{Assumption 1:} we suppose that $l=1$, $3\leq k\leq n-2$ and there exists a positive constant $N_2>1$, such that $\k_2<N_2$.

For $II$ of \eqref{hess6}, due to 
\begin{align}\label{11pp}
\dot{F}^{pp}=\k_1\dot{F}^{11,pp}+\s_{k-1}(\k|1,p)
\end{align}
and \eqref{b},  we have
\begin{equation}\label{b11}
   \begin{aligned}
   2ah_{111}\frac{b}{|\b|^2}\sum_{p=2}^n\ddot{F}^{11pp}\dot{F}^{pp}=& 2ah_{111}\frac{b}{|\b|^2}\sum_{p=2}^n\f\frac{\dot{F}^{pp}}{\k_1}-\frac{\s_{k-1}(\k|1,p)}{\k_1}\r\dot{F}^{pp} \\
   =&2a\f a-(1+a)\frac{kF}{\k_1\dot{F}^{11}}\r\frac{\dot{F}^{11}h_{111}^2}{\k_1}\f1-\sum_{p=2}^n\frac{\s_{k-1}(\k|1,p)\dot{F}^{pp}}{|\b|^2}\r\\
   =& 2a\f a-(1+a)\frac{kF}{\k_1\dot{F}^{11}}\r\frac{\dot{F}^{11}h_{111}^2}{\k_1} \\
   &- 2a\f a-(1+a)\frac{kF}{\k_1\dot{F}^{11}}\r\frac{\dot{F}^{11}h_{111}^2}{\k_1} \sum_{p=2}^n\frac{\s_{k-1}(\k|1,p)\dot{F}^{pp}}{|\b|^2}
\end{aligned} 
\end{equation}
Note that $\dot{F}^{pp}\leq |\beta|$ and \begin{align}\label{upperboundforsigmak-1}
|\s_{k-1}(\k|1,p)|\leq C(n,k)\k_2\cdots\k_{k-1}\k_k\leq C(n,k,A)\frac{\s_{k-1}}{\k_1},
\end{align}
where we used the facts that $\k_k\leq C(A)$ in Lemma \ref{upperboundforkappak}. 
We now use Lemma \ref{fk1} and \eqref{betalowerbound1} to estimate the last term of \eqref{b11}. It is bounded by 
\begin{align}C(n,k,A)|a|(|a|+n|1+a|)\frac{\dot{F}^{11}h_{111}^2}{\k_1^2}\leq C'(n,k,A)(a^2+1)\frac{\dot{F}^{11}h_{111}^2}{\k_1^2}.\end{align}
Thus \eqref{b11} becomes
\begin{align}\label{a2}
    2ah_{111}\frac{b}{|\b|^2}\sum_{p=2}^n\ddot{F}^{11pp}\dot{F}^{pp}\geq 2a\f a-(1+a)\frac{kF}{\k_1\dot{F}^{11}}\r\frac{\dot{F}^{11}h_{111}^2}{\k_1}-C(a^2+1)\frac{\dot{F}^{11}h_{111}^2}{\k_1^2}
\end{align}

Besides, we note that 
\begin{align}\label{sk-1,1}
    \s_{k-1}\leq C\k_1\cdots\k_{k-1}\leq CN_2^{k-2}\k_1
\end{align}
and 
\begin{align}\label{f1k1,1}
    \dot{F}^{11}\k_1=\s_k-\s_k(\k|1)\leq C_1(n,k,A)N_2^{k-2}
\end{align}
since we assume $\k_2<N_2$ in this case. From \eqref{fppqqupperbound} and similar to the proof in the first inequality of \eqref{d}, $V$ of \eqref{hess6} becomes
\begin{equation}\label{b2}
 \begin{aligned}
-2\frac{b}{|\b|^2}\sum_{p,q\geq2}\dot{F}^{pp}\ddot{F}^{pp,qq}\lambda_{q}\geq& -C(n,k,\d_0)|b||\lambda|\\
\geq& -\frac{CC_1(n,k,A)N_2^{k-2}}{\epsilon}(a^2+1)\frac{\dot{F}^{11}h^2_{111}}{\k_1^2}-\frac{\epsilon\dot{F}^{11}\k_1^2|\lambda|^2}{C_1(n,k,A)N_2^{k-2}}\\
\geq& -\frac{CN_2^{k-2}}{\epsilon}(a^2+1)\frac{\dot{F}^{11}h^2_{111}}{\k_1^2}-\epsilon\k_1|\lambda|^2.
\end{aligned}   
\end{equation}
Here we use Cauchy-Schwartz inequality in the second inequality and \eqref{f1k1,1} in the last inequality.

We now handle $VI$ of \eqref{hess6}. Denote by $\lambda=(0,\lambda_2,\dots,\lambda_n)$ Due to Lemma \ref{con}, we have
\begin{equation}\label{7}
    \begin{aligned}
-\sum_{\gamma,\g\geq2}\lambda_{\gamma}\ddot{F}^{\gamma\gamma\g\g}\lambda_{\g}= &  -\s_k\partial^2_{\lambda}(\log \s_k) \\
=&-\s_k\partial^2_{\lambda}(\log q_k)-\s_k\partial^2_{\lambda}(\log q_{k-1})-\cdots-\s_k\partial^2_{\lambda}(\log \s_1)\\
\geq&-\s_{k-1}\partial^2_{\lambda}q_k,
    \end{aligned}
\end{equation}
which is positive by concavity. We shall give a positive lower bound for the last term of \eqref{7} depending on $|\lambda|$. By Lemma \ref{lem,HG}
\begin{equation}\label{clambdasquare}
    \begin{aligned}
      -\sum_{\gamma,\g\geq2}\lambda_{\gamma}\ddot{F}^{\gamma\gamma\g\g}\lambda_{\g}\geq -\s_{k-1}\partial^2_{\lambda}q_k\geq C\sum_{i=2}^{k-1}\k_1\cdots\hat{\k_i}\cdots\k_{k-1}\left|[\lambda]_{1,\dots\hat{i},\dots,k-1}^{\perp}\right|^2\geq \frac{C_6}{N_2}\k_1|\lambda|^2
    \end{aligned}
\end{equation}
since  $(\lambda_2,\lambda_3,\dots,\lambda_n)\perp(\k_2,\k_3,\dots,\k_n)$ by the definition of \eqref{perp} and the assumption in this case. This positive term is useful.

By using \eqref{lambda}, we have 
\begin{equation}\label{eq11pp}
 \sum_{p=2}^n\ddot{F}^{11pp}\lambda_{p}=\sum_{p=2}^n\frac{\dot{F}^{pp}-\s_{k-1}(\k|1,p)}{\k_1}\lambda_{p}=-\sum_{p=2}^n\frac{\s_{k-1}(\k|1,p)}{\k_1}\lambda_p.
\end{equation}
Together with \eqref{upperboundforsigmak-1} and \eqref{sk-1,1}, $III$ of \eqref{hess6} becomes
\begin{equation}\label{b1}
\begin{aligned}
2ah_{111}\sum_{q=2}^n\ddot{F}^{11qq}\lambda_{q}\geq& -2|ah_{111}|\frac{CN_2^{k-2}}{\k_1}|\lambda|\\\geq& -a^2\frac{C^2N_2^{2(k-2)}h_{111}^2}{\epsilon\k_1^3}-\epsilon    \k_1|\lambda|^2\\
\geq& -Ca^2\frac{N_2^{2(k-2)}\dot{F}^{11}h_{111}^2}{\epsilon\k_1^2}-\epsilon    \k_1|\lambda|^2.
\end{aligned}
\end{equation}
where we use Lemma \ref{fk1} in the last inequality.
In all, plugging \eqref{a1}, \eqref{d}, \eqref{a2},
\eqref{b2}, \eqref{clambdasquare} and \eqref{b1} into \eqref{hess6} yields 
\begin{equation}\label{ppqq3}
    \begin{aligned}
      -\ddot{F}^{pp,qq}\xi_p\xi_q&+\f 2-\frac{C(A)}{\k_1}\r\sum_{p\neq 1}\frac{F^{pp}\xi_p^2}{\k_1}\\
      \geq& (1+a)^2\frac{k(k-1)Fh_{111}^2}{\k_1^2}- C(a^2+1)\f 1+\frac{CN_2^{2(k-2)}}{\epsilon}\r\frac{\dot{F}^{11}h_{111}^2}{\k_1^2}+\frac{C_6}{N_2}\k_1|\lambda|^2\\
&+2a\f a-(1+a)\frac{kF}{\k_1\dot{F}^{11}}\r\frac{\dot{F}^{11}h_{111}^2}{\k_1}-2\epsilon\k_1|\lambda|^2\\ &+2\f1-\frac{C(A)}{\k_1}\r\sum_{p\neq 1}\frac{F^{pp}\xi_p^2}{\k_1}\\
      \geq &\f2a^2+\f(1+a)^2k(k-1)-2a(1+a)k\r\frac{F}{\k_1\dot{F}^{11}}\r\frac{\dot{F}^{11}h_{111}^2}{\k_1}\\
      &+\frac{C_6}{2N_2}\k_1|\lambda|^2-C_4(a^2+1)N_2^{2(k-1)}\frac{\dot{F}^{11}h_{111}^2}{\k_1^2}+2\f1-\frac{C(A)}{\k_1}\r\sum_{p\neq 1}\frac{F^{pp}\xi_p^2}{\k_1}.
    \end{aligned}
\end{equation}
Here we choose $\epsilon=\frac{C_6}{4N_2}$ in the last inequality of \eqref{ppqq3} above such that 
\[2\epsilon\k_1|\lambda|^2=\frac{C_6}{2N_2}\k_1|\lambda|^2. \]

The following proposition improves Proposition \ref{k1f11,f}.
\begin{prop}\label{sigma_kkappa1negative}
   Under \textbf{Assumption 1} and Case 2(b), if $\kappa_1$  is very large (depending on $N_2$), then $\sigma_k(\kappa|1)$ is negative and uniformly away from zero, i.e., there exists $\gamma=\min\lbrace1,\frac{\d_0^k}{2k}\rbrace>0$ such that
    \[\s_k(\kappa|1)\leq -\gamma \s_k.\]
    Moreover,
    \[\left|\sum_{p\geq 2}F^{pp}\kappa_p^2+k\kappa_1\sigma_k(\kappa|1)\right|\leq  C(N_2).\]
    
\end{prop}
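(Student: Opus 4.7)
The plan is to establish the two assertions in sequence; the second is a clean algebraic identity while the first is the main difficulty.

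For the bound on $\sum_{p\geq 2}\dot F^{pp}\kappa_p^2 + k\kappa_1\sigma_k(\kappa|1)$, I will apply Lemma~\ref{f11k12} (giving $\sum_p \dot F^{pp}\kappa_p^2 = \sigma_1\sigma_k - (k+1)\sigma_{k+1}$) and isolate the $p=1$ contribution using $\dot F^{11}\kappa_1^2 = \kappa_1(\sigma_k - \sigma_k(\kappa|1))$. Expanding $\sigma_1 = \kappa_1 + \sigma_1(\kappa|1)$ and $\sigma_{k+1} = \kappa_1\sigma_k(\kappa|1) + \sigma_{k+1}(\kappa|1)$, all the $\kappa_1$-multiplied pieces cancel algebraically and the expression collapses to $\sigma_1(\kappa|1)\sigma_k - (k+1)\sigma_{k+1}(\kappa|1)$. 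Under Assumption~1 and Case~2(b), every $\kappa_p$ with $p\geq 2$ lies in $[-A, N_2]$, so both factors on the right are uniformly bounded by constants depending on $n,k,N_2,A$, giving the stated estimate.

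For the negativity $\sigma_k(\kappa|1)\leq-\gamma\sigma_k$, the approach is to use the identity $\sigma_k(\kappa|1) = \sigma_k - \kappa_1\sigma_{k-1}(\kappa|1)$ to reduce the claim to $\kappa_1\sigma_{k-1}(\kappa|1)\geq (1+\gamma)\sigma_k$. The key tool is an iterative lower bound obtained by expanding with respect to $\kappa_n$: since $\kappa\in\Gamma_k$ implies $(\kappa|1)\in\Gamma_{k-1}^{n-1}$, one has $\sigma_j(\kappa|1)>0$ for $j=0,\ldots,k-1$. Setting $S_j' := \sigma_j(\kappa|1,n)$ and using $\sigma_j(\kappa|1) = S_j' - |\kappa_n|S_{j-1}'$, the positivity of $\sigma_j(\kappa|1)$ forces $S_j' \geq |\kappa_n|S_{j-1}'$, so iterating from $S_0' = 1$ yields $S_j' \geq |\kappa_n|^j \geq \delta_0^j$ for every $j\leq k-1$. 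Plugging $S_{k-1}'\geq\delta_0^{k-1}$ into $\sigma_k(\kappa|1) = S_k' - |\kappa_n|S_{k-1}'$ produces the sharp negative contribution $-|\kappa_n|S_{k-1}'\leq -\delta_0^k$.

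The remaining term $S_k'$, a symmetric polynomial in the bounded variables $\kappa_2,\ldots,\kappa_{n-1}$, will be controlled via the expansion
\[
\sigma_k = \kappa_1\kappa_n S_{k-2}' + (\kappa_1+\kappa_n)S_{k-1}' + S_k'
\]
obtained by expanding $\sigma_k$ successively in $\kappa_1$ and $\kappa_n$. The smallness constraint $S_{k-1}' - |\kappa_n|S_{k-2}' = \sigma_{k-1}(\kappa|1) = O(1/\kappa_1)$ (from $\sigma_k(\kappa|1)$ being bounded while $\sigma_k$ is fixed) pins down $S_k'$ up to an $O(1/\kappa_1)$ error in terms of $\sigma_k$, $|\kappa_n|$, and $S_{k-2}'$. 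Choosing $\kappa_1$ large enough depending on $N_2, A, \delta_0$ absorbs this error inside the slack $\gamma\sigma_k = \delta_0^k\sigma_k/(2k)$, giving the bound. The hard part will be precisely this last quantitative step: upgrading the already-known qualitative $\sigma_k(\kappa|1)\leq 0$ from Proposition~\ref{k1f11,f} to the uniform strictly-negative bound $\sigma_k(\kappa|1)\leq -\gamma\sigma_k$, which requires tracking constants carefully so that the slack $\delta_0^k\sigma_k/(2k)$ is indeed larger than the accumulated $O(1/\kappa_1)$ corrections.
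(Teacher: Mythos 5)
Your treatment of the second claim is correct and matches the paper: expanding $\sigma_1\sigma_k-(k+1)\sigma_{k+1}-\dot F^{11}\kappa_1^2$ in $\kappa_1$, the $\kappa_1$-linear pieces cancel and one is left with $\sigma_1(\kappa|1)\sigma_k-(k+1)\sigma_{k+1}(\kappa|1)$, which is uniformly bounded since $\kappa_2,\ldots,\kappa_n\in[-A,N_2]$ under Assumption 1.

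For the first claim, however, there is a genuine gap. Your iterative estimate $S'_j:=\sigma_j(\kappa|1,n)\geq\delta_0^j$ is correct and in fact recovers the paper's inequality \eqref{fnn}: since $\dot F^{nn}=\sigma_{k-1}(\kappa|n)=\kappa_1 S'_{k-2}+S'_{k-1}$, it gives $\dot F^{nn}\geq\delta_0^{k-2}\kappa_1$. But your plan to control $S'_k$ by feeding $\sigma_{k-1}(\kappa|1)=S'_{k-1}-|\kappa_n|S'_{k-2}=O(1/\kappa_1)$ back into $\sigma_k=\kappa_1\kappa_n S'_{k-2}+(\kappa_1+\kappa_n)S'_{k-1}+S'_k$ is circular: simplifying that expansion using $\kappa_1\sigma_{k-1}(\kappa|1)=\sigma_k-\sigma_k(\kappa|1)$ returns exactly the identity $\sigma_k(\kappa|1)=S'_k-|\kappa_n|S'_{k-1}$ that you started from, and produces no quantitative upper bound on $S'_k$ (which need not be small; it is only $O(N_2^k)$). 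The missing step — and it is precisely how the paper closes the argument — is to combine the identity you just proved in part two with the coercive lower bound
\[
\sum_{p\geq 2}\dot F^{pp}\kappa_p^2\;\geq\;\dot F^{nn}\kappa_n^2\;\geq\;\delta_0^2\cdot\delta_0^{k-2}\kappa_1=\delta_0^k\kappa_1.
\]
Together these give $\delta_0^k\kappa_1\leq C N_2^{k-1}-k\kappa_1\sigma_k(\kappa|1)$, i.e.\ $\sigma_k(\kappa|1)\leq -\delta_0^k/k+CN_2^{k-1}/(k\kappa_1)$, which drops below $-\delta_0^k/(2k)$ once $\kappa_1$ exceeds a fixed multiple of $N_2^{k-1}$. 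In other words, the two assertions in the proposition are not independent: the second is the engine that drives the first, and your proposal establishes the engine without connecting it to the load.
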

\begin{proof}

From Lemma \ref{k1}, we obtain  
\begin{align}\label{fnn}
\d_0^{k-2}\k_1\leq \s_{k-1}=\k_n\s_{k-2}(\k|n)+\s_{k-1}(\k|n)\leq  \s_{k-1}(\k|n).
\end{align}
Thus
\begin{equation}\label{sigmakkappapsqure}
 \begin{aligned}
\d_0^k\k_1\leq\dot{F}^{nn}\k_n^2 \leq \sum_{p=2}^n \dot{F}^{pp}\k_p^2 =& \s_1 \s_k -(k+1) \s_{k+1}-\dot{F}^{11} \k_1^2\\
    =& \s_1(\k|1) \s_k-k \k_1\s_k(\k|1)-(k+1)\s_{k+1}(\k|1)\\
    \leq &CN_2^{k-1}-k \k_1\s_k(\k|1)
\end{aligned}   
\end{equation}
where we use Lemma \ref{f11k12} and \eqref{sk}.

Thus, due to \eqref{sigmakkappapsqure}, if
\begin{align}
    \k_1> C_5N_2^{k-1},
\end{align}
then there exists a $\gamma=\min\lbrace1,\frac{\d_0^k}{2k}\rbrace>0$ such that
\begin{align*}
 \s_k(\k|1)\leq -\gamma\sigma_k<0.
\end{align*}
Note that $\sigma_k$ is constant. The second statement follows from \eqref{sigmakkappapsqure}. In fact we have from \eqref{sigmakkappapsqure}
\begin{equation}\label{sigmakkappapsqure,l}
    \sum_{p=2}^n \dot{F}^{pp}\k_p^2\geq -CN_2^{k-1}-k \k_1\s_k(\k|1).
\end{equation}
\end{proof}
Hence,
\begin{align}\label{F11lowerbound}
    \dot{F}^{11}=\frac{\s_k-\s_k(\k|1)}{\k_1}\geq \frac{\s_k}{\k_1}+\frac{\gamma \sigma_k}{\k_1}.
\end{align}
Combining \eqref{f1k1,1} and \eqref{F11lowerbound}, we obtain
\begin{align}\label{boundsforF11kappa1}
    0<\frac{C}{N_2^{k-2}}\leq\frac{F}{\dot{F}^{11}\k_1}\leq 1-\frac{\gamma}{2}.
\end{align}
By calculation, we conclude that for $3\leq k\leq n-2$, we either have 
\begin{align*}
     \textbf{C}:=2a^2+\f(1+a)^2k(k-1)-2a(1+a)k\r\frac{F}{\k_1\dot{F}^{11}}\geq 2a^2
\end{align*}
if the coefficient of $F/\k_1\dot{F}_{11}$ is non-negative, or by proposition \ref{k1f11,f}, \textbf{C} is greater than 
\begin{equation}\label{coe,3}
  (k^2-3k+2)a^2+(2k^2-4k)a+k^2-k\geq \frac{k+1}{k}+\frac{k-2}{k^3-k^2-k-1}a^2\geq \frac{k+1}{k}+c(k)a^2.
\end{equation}

Suppose that the coefficient of $F/\k_1\dot{F}_{11}$ is negative. It follows from \eqref{ppqq3} that
      \begin{equation}\label{ppqq2}
          \begin{aligned}
        -\ddot{F}^{pp,qq}\xi_p\xi_q+\f 2-\frac{C(A)}{\k_1}\r\sum_{p\neq 1}\frac{F^{pp}\xi_p^2}{\k_1}
        \geq &\textbf{C}\cdot \frac{\dot{F}^{11}h_{111}^2}{\k_1}
      -C_4(a^2+1)N_2^{2(k-1)}\frac{\dot{F}^{11}h_{111}^2}{\k_1^2}\\
        \geq &\f\frac{k+1}{k}+c(k)a^2-C_4\k_1^{-\frac{1}{k}}-C_4 a^2\k_1^{-\frac{1}{k}}\r\frac{\dot{F}^{11}h_{111}^2}{\k_1}\\
 \geq &\frac{\dot{F}^{11}h_{111}^2}{\k_1}.
      \end{aligned}   
      \end{equation}
      by assuming $\k_1\geq N_2^{2k}$ in the first inequality and $\k_1\geq \max\lbrace \f\frac{C_4}{c(k)}\r^k, (kC_4)^k\rbrace $.
      This proves the \textbf{main claim}.

Suppose that the coefficient of $F/\k_1\dot{F}_{11}$ is non-negative and $\textbf{C}\geq 2a^2$.
 In fact, we can assume that there exists some constant $C_1>0$ such that
\begin{align}\label{lambda,u}
    |\lambda_p|\leq \sqrt{\frac{2C_1}{C_6}}\frac{N_2^{\frac{k-1}{2}}(|a|+1)|h_{111}|}{\k_1^{\frac{3}{2}}},\quad \forall\ p=2,\dots,n.
\end{align}
Otherwise by \eqref{clambdasquare} and \eqref{boundsforF11kappa1},
\begin{align}\label{otherwisecase}
     -\sum_{p,q\geq2}\lambda_{p}\ddot{F}^{pp,qq}\lambda_{q}\geq \frac{C_6(\d_0)}{N_2}\k_1|\lambda|^2\geq 4C_1N_2^{k-2}(a^2+1)\frac{h_{111}^2}{\k_1^2}> 4(a^2+1)\frac{\dot{F}^{11}h_{111}^2}{\k_1}
\end{align}
which contributes a desired positive term. 
Then \eqref{ppqq3} yields that
\begin{equation}
\begin{aligned}
    -\ddot{F}^{pp,qq}\xi_p\xi_q+2\sum_{p\neq 1}\frac{F^{pp}\xi_p^2}{(\k_1+1-\k_p)}
     \geq& \frac{C_6}{2N_2}\k_1|\lambda|^2-C_4(a^2+1)N_2^{2(k-1)}\frac{\dot{F}^{11}h_{111}^2}{\k_1^2}\\
     \geq& \f2(a^2+1)-C_4(a^2+1)\k_1^{-\frac{1}{k}}\r\frac{\dot{F}^{11}h_{111}^2}{\k_1}\\
>&\frac{\dot{F}^{11}h_{111}^2}{\k_1}
\end{aligned}
\end{equation}
where we assume $\k_1\geq N_2^{2k}$ in the second inequality and $\k_1\geq C_4^k$ in the last inequality. This implies the \textbf{main claim}.

Hence we assume that \eqref{lambda,u} holds. By definition \eqref{xi} for any $2\leq p\leq n$,
\begin{align}\label{plugxiinlastterm}
 \xi_p^2=&\f(1+a)\frac{\k_ph_{111}}{\k_1}+\frac{b\dot{F}^{pp}}{|\b|^2}+\lambda_p\r^2\nonumber\\
 \geq &(1+a)^2\frac{\k_p^2h_{111}^2}{\k_1^2}-2\left|(1+a)\f\frac{b\dot{F}^{pp}}{|\b|^2}+\lambda_p\r\frac{\k_ph_{111}}{\k_1}\right|.
\end{align}
Recall that $\k_2\leq N_2$ in the assumption of Case 2(b).
Combining Proposition \ref{f11est} and \eqref{lambda,u}, we have 
\begin{equation}\label{xi,1}
    \begin{aligned}
        \xi_p^2\geq &(1+a)^2\frac{\k_p^2h_{111}^2}{\k_1^2}-C(a^2+1)\frac{N_2h_{111}^2}{\k_1^3}-C(a^2+1)\frac{N_2^{\frac{k+1}{2}}h_{111}^2}{\k_1^{\frac{5}{2}}}\\
        \geq&(1+a)^2\frac{\k_p^2h_{111}^2}{\k_1^2}-2C(a^2+1)\frac{N_2^{\frac{k+1}{2}}h_{111}^2}{\k_1^{\frac{5}{2}}}
    \end{aligned}
\end{equation}
by assuming $\k_1>1$.

As for any $2\leq p\leq n$, 
\begin{equation}\label{fpp,est}
  \dot{F}^{pp}\leq C(n,k)\k_1\k_2\cdots\k_{k-1}\leq C(n,k)\s_{k-1}\leq C(n,k)N_2^{k-2}\k_1.  
\end{equation}

Then combining Proposition \ref{f11est}, \eqref{sk-1,1} and \eqref{sigmakkappapsqure,l}, we have
\begin{equation}\label{7,1}
\begin{aligned}
    2\f1-\frac{c}{\k_1}\r\sum_{p\neq 1}\frac{\dot{F}^{pp}\xi_p^2}{\k_1}\geq& 2\f1-\frac{c}{\k_1}\r\frac{(1+a)^2h_{111}^2}{\k_1^3}\sum_{p\geq 2}\dot{F}^{pp}\k_p^2-C(a^2+1)\frac{N_2^{\frac{k+1}{2}}\sum_{p\geq 2}\dot{F}^{pp}h_{111}^2}{\k_1^{\frac{7}{2}}}\\
    \geq& 2\f1-\frac{c}{\k_1}\r k\frac{(1+a)^2h_{111}^2}{\k_1^2}(-C\frac{N_2^{k-1}}{\k_1}+\k_1\dot{F}^{11}-\s_k)-\frac{C(a^2+1)N_2^{\frac{3k-3}{2}}h_{111}^2}{\k_1^{\frac{5}{2}}}\\
    \geq& 2k\frac{(1+a)^2h_{111}^2}{\k_1^2}(\k_1\dot{F}^{11}-\s_k)- C(a^2+1)N_2^{\frac{3k-3}{2}}\frac{\dot{F}^{11}h_{111}^2}{\k_1^\frac{3}{2}}.
    \end{aligned}    
\end{equation}

Plugging \eqref{7,1} into \eqref{ppqq3}, we obtain the conclusion in this case,
    \begin{align}
      -\ddot{F}^{pp,qq}\xi_p\xi_q&+2\f1-\frac{c}{\k_1}\r\sum_{p\neq 1}\frac{F^{pp}\xi_p^2}{\k_1}\nonumber\\
\geq&\f2a^2+2k(1+a)^2+\f(1+a)^2k(k-1)-2a(1+a)k-2k(1+a)^2\r t\r\frac{\dot{F}^{11}h_{111}^2}{\k_1}\nonumber\\
    &-C_7(a^2+1)N_2^{2(k-1)}\frac{\dot{F}^{11}h_{111}^2}{\k_1^\frac{3}{2}}\label{ppqq5}
    \end{align}
where we assume $F=t\k_1\dot{F}^{11}$ in \eqref{ppqq5}. 

If the coefficient of $t$ in \eqref{ppqq5} is negative, by calculation in \eqref{ppqq2}, \eqref{ppqq5} implies 
 \begin{align}
      -\ddot{F}^{pp,qq}\xi_p\xi_q&+2\f1-\frac{c}{\k_1}\r\sum_{p\neq 1}\frac{F^{pp}\xi_p^2}{\k_1}\nonumber\\
\geq&\f2a^2+\f(1+a)^2k(k-1)-2a(1+a)k\r\r\frac{\dot{F}^{11}h_{111}^2}{\k_1}\nonumber\\
      &-C_7(a^2+1)N_2^{2(k-1)}\frac{\dot{F}^{11}h_{111}^2}{\k_1^\frac{3}{2}}\nonumber\\
      \geq&\f \frac{k+1}{k}+c(k)a^2\r\frac{\dot{F}^{11}h_{111}^2}{\k_1}\nonumber\\
      &-C_7(a^2+1)N_2^{2(k-1)}\frac{\dot{F}^{11}h_{111}^2}{\k_1^\frac{3}{2}}\nonumber\\
      > &\frac{\dot{F}^{11}h_{111}^2}{\k_1}
      \end{align}
      by choosing $\k_1\geq \max\left\lbrace N_2^{4k}, (kC_7)^{2k},\f\frac{C_7}{c(k)}\r^{2k}\right\rbrace$.

If the coefficient of $t$ in \eqref{ppqq5} is positive, \eqref{ppqq5} implies   
 \begin{align}
      -\ddot{F}^{pp,qq}\xi_p\xi_q&+2\f1-\frac{c}{\k_1}\r\sum_{p\neq 1}\frac{F^{pp}\xi_p^2}{\k_1}\nonumber\\
    \geq &\f2a^2+2k(1+a)^2\r\frac{\dot{F}^{11}h_{111}^2}{\k_1}-C_7(a^2+1)N_2^{2(k-1)}\frac{\dot{F}^{11}h_{111}^2}{\k_1^\frac{3}{2}}\nonumber\\
    \geq &\f \frac{3}{2}a^2+4(1+a)^2-\frac{C_7}{\k_1^{\frac{1}{2k}}}+\f\frac{1}{2}-\frac{C_7}{\k_1^{\frac{1}{2k}}}\r a^2\r\frac{\dot{F}^{11}h_{111}^2}{\k_1}\\
    > &\frac{\dot{F}^{11}h_{111}^2}{\k_1}
    \end{align}
     by choosing $\k_1\geq \max\left\lbrace N_2^{4k}, (2C_7)^{2k}\right\rbrace$. 
This proves the \textbf{Main claim}.

\subsection{Assumption 2:}
We suppose that $2\leq l\leq k-2$ and $4\leq k\leq n-2$.\\

\begin{prop}\label{relationbetweens_k+1ands_k-1} Under assumptions of Case $2$ and Case $2(b)$, there exists large $N_2$ in \textbf{Assumption 2} such that
    \[-\s_{k+1}\geq C(n,k,a_1)\s_{k-1}\]
    where $a_1$ is the lower bound appearing in Lemma \ref{bnu}. In particular, $\s_{k+1}$ is negative. 
\end{prop}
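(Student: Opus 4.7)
The plan is to exploit the Case 2 hypothesis, which by Lemma \ref{f11k12} reads
\begin{equation*}
(n-k+1)\sigma_{k-1} < \frac{16}{a_1^2}\bigl(\sigma_1\sigma_k - (k+1)\sigma_{k+1}\bigr),
\end{equation*}
and to show that, under Assumption 2, the quantity $\sigma_1\sigma_k$ is negligible compared to $\sigma_{k-1}$ once $N_2$ is chosen sufficiently large. Indeed, if I can arrange $\sigma_1\sigma_k \leq \frac{a_1^2(n-k+1)}{32}\sigma_{k-1}$, then rearranging the Case 2 inequality gives $-(k+1)\sigma_{k+1} \geq \frac{a_1^2(n-k+1)}{32}\sigma_{k-1}$, which yields the desired conclusion with $C(n,k,a_1) = \frac{a_1^2(n-k+1)}{32(k+1)}$. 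In particular $\sigma_{k+1}<0$.

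First I would lower-bound $\sigma_{k-1}$ as follows. By Lemma \ref{k1}, $\sigma_{k-1}\geq\kappa_1\kappa_2\cdots\kappa_{k-1}$. Assumption 2 gives $\kappa_2\geq\cdots\geq\kappa_l\geq N_2^2$ with $l\geq 2$, so that $\kappa_1\kappa_2\cdots\kappa_l\geq \kappa_1 N_2^{2(l-1)}$. On the other hand, Case 2(b) together with Lemma \ref{lemn} implies $\kappa_{k-1}\geq\kappa_k\geq\delta_0/(n-k)$, whence $\kappa_{l+1}\cdots\kappa_{k-1}\geq(\delta_0/(n-k))^{k-1-l}$. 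Multiplying these estimates gives
\begin{equation*}
\sigma_{k-1}\geq c(n,k,\delta_0)\,\kappa_1\, N_2^{2(l-1)}.
\end{equation*}

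For the upper bound, since $\sigma_k=C_n^k\sigma$ is a constant and $\sigma_1\leq n\kappa_1$, one has $\sigma_1\sigma_k \leq n\,C_n^k\sigma\cdot\kappa_1$. Dividing the two estimates yields
\begin{equation*}
\frac{\sigma_1\sigma_k}{\sigma_{k-1}} \leq \frac{C(n,k,\sigma,\delta_0)}{N_2^{2(l-1)}}.
\end{equation*}
Because $l\geq 2$ under Assumption 2, the exponent $2(l-1)\geq 2$ is strictly positive, so choosing $N_2$ large (depending only on $n,k,\sigma_k,a_1$, since $\delta_0$ already depends only on these quantities) produces the required inequality $\sigma_1\sigma_k \leq \frac{a_1^2(n-k+1)}{32}\sigma_{k-1}$. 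The key feature making this argument work, and the reason Assumption 2 rather than Assumption 1 is in force, is precisely that $l\geq 2$: when $l=1$ the factor $N_2^{2(l-1)}$ degenerates to $1$ and this ratio bound fails, which is why the case $l=1$ had to be handled by the more elaborate perturbation argument in the previous subsection.
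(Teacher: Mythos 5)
Your proposal is correct and follows essentially the same route as the paper: bound $\sigma_{k-1}$ from below by $c(n,k,\delta_0)\kappa_1 N_2^{2(l-1)}$ (via Lemma \ref{k1}, the Case 2(b) lower bound $\kappa_{k-1}\geq\kappa_k\geq\delta_0/(n-k)$, and the Assumption 2 hypothesis $\kappa_l\geq N_2^2$), bound $\sigma_1\sigma_k$ from above by $n\,C_n^k\sigma\,\kappa_1$, and then choose $N_2$ large enough that $\sigma_1\sigma_k$ is dominated by a small multiple of $\sigma_{k-1}$, after which the Case 2 inequality gives the claim by rearrangement. You also correctly flag the role of $l\geq 2$, which is exactly why this proposition sits under Assumption 2.
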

\begin{proof}
    Since $\sigma_1\leq n\k_1$ and $\k_k\geq C|\k_n|$, using Lemma \ref{k1} we have
    \begin{equation}\label{sk-1,n2}
        \s_{k-1}> \k_1\cdots\k_{k-1}\geq C(\d_0)\k_1\cdots\k_l \geq C\k_1 N_2^{2\ell-2}.
    \end{equation}
    Then $\sigma_{k-1}\geq C_2N_2^{2\ell-2}\sigma_1.$ 
    Recall \eqref{sk-1} 
    \begin{align}
 (n-k+1)\s_{k-1}<\frac{16}{a_1^2}\f\s_1\s_k-(k+1)\s_{k+1}\r.   
\end{align}
We choose $N_2\geq \f\frac{16\s_k}{a_1^2}\r^{\frac{1}{2l-2}}$, such that \eqref{sk-1,n2} yields that
\begin{equation}
 \begin{aligned}
  -\s_{k+1}>& \frac{a_1^2}{16(k+1)}(n-k+1)\s_{k-1}-\frac{\s_1\s_k}{k+1}\\
  \geq &\frac{(n-k)a_1^2}{16(k+1)}\s_{k-1}+ \frac{C_2a_1^2N_2^{2\ell-2}}{16(k+1)}\sigma_1-\frac{\s_1\s_k}{k+1}\\
  \geq& \frac{(n-k)a_1^2}{16(k+1)}\s_{k-1},
\end{aligned}   
\end{equation}
which prove the proposition.
\end{proof}
Recall \eqref{sk}.
For any $i\leq l$,
\begin{equation}\label{kifi}
\begin{aligned}
    \k_i\dot{F}^{ii}&=\s_k-\s_k(\k|i)=\s_k-\frac{\s_{k+1}}{\k_i}+\frac{\s_{k+1}(\k|i)}{\k_i}.
\end{aligned}    
\end{equation}
By Lemma \ref{k1} and Lemma \ref{lemn}, Proposition \ref{relationbetweens_k+1ands_k-1} implies that
\begin{align}\label{sk+1,i}
   |\s_{k+1}(\k|i)|\leq C\frac{\k_1\cdots\k_k^3}{\k_i}\leq C(A)\frac{\k_1\cdots\k_{k-1}}{\k_i}\leq C(A,a_1)\frac{-\s_{k+1}}{\k_i}.
\end{align}
Suppose $i>1$. 
\begin{align}
    \s_k\leq C(\d_0)\frac{\k_1\cdots\k_{k-1}}{\k_i^2}\leq C(\d_0)\frac{\s_{k-1}}{\k_i^2}\leq-C(\d_0,a_1)\frac{\s_{k+1}}{\k_i^2}.
\end{align}
Suppose $i=1$. Then for $3\leq k\leq n-2$ by Proposition \ref{relationbetweens_k+1ands_k-1} and \eqref{sk-1,n2},
\begin{align}\label{sk,n2}
    \s_k\leq C(\d_0)\frac{\s_{k-1}}{\k_1N_2^{2l-2}}\leq C(\d_0,a_1)\frac{-\s_{k+1}}{\k_1N_2^2}.
\end{align}
Thus we have
\begin{align}\label{k1f1}
   \f1-\frac{c_2}{N_2^2}\r\frac{-\s_{k+1}}{\k_i}\leq \k_i\dot{F}^{ii}\leq \f1+\frac{c_2}{N_2^2}\r\frac{-\s_{k+1}}{\k_i}.
\end{align}
Or equivalently, for any $1\leq i\leq l$ there exists small $\epsilon(N_2)>0$ such that 
\begin{align}\label{ki2fii}
    -\f1-\epsilon(N_2)\r\s_{k+1}\leq \dot{F}^{ii}\k_i^2\leq -\f1+\epsilon(N_2)\r\s_{k+1}.   
\end{align}
where $\epsilon(N_2)=\frac{c_2}{N_2^2}. $
Using \eqref{sk-1,n2}, \eqref{ki2fii} and Proposition \ref{relationbetweens_k+1ands_k-1}, we have 
\begin{align}\label{kifii}
    \k_i\dot{F}^{ii}\geq\frac{-\s_{k+1}}{2\k_i}\geq \frac{C}{2}\frac{\s_{k-1}}{\k_i}\geq c(\d_0)N_2^2.
\end{align}
Using Proposition \ref{relationbetweens_k+1ands_k-1} and \eqref{ki2fii}, we have  
\begin{align}\label{kifii,u}
    \k_i\dot{F}^{ii}\leq \frac{-3\s_{k+1}}{2\k_i}\leq \frac{3}{2}C(A)\frac{\s_{k-1}}{\k_i}.
\end{align}
Here we choose $N_2>\sqrt{2c_2}$ such that $\epsilon({N_2})<\frac{1}{2}$ in \eqref{kifii} and \eqref{kifii,u}.
Recall \eqref{b}.
\begin{align}
    b=\f a-(1+a)\frac{kF}{\k_1\dot{F}^{11}}\r\dot{F}^{11}h_{111}.
\end{align}
Then by \eqref{kifii}, we have
\begin{align}\label{b,est}
   a\dot{F}^{11}h_{111}-\frac{C(|a|+1)\dot{F}^{11}|h_{111}|}{N_2^2} \leq b\leq  a\dot{F}^{11}h_{111}+\frac{C(|a|+1)\dot{F}^{11}|h_{111}|}{N_2^2}. 
\end{align}
$II$ of \eqref{hess6} becomes
\begin{equation}\label{f1pfp}
   \begin{aligned}
   2ah_{111}\frac{b}{|\b|^2}\sum_{\a=2}^n\ddot{F}^{11\a\a}\dot{F}^{\a\a}=& 2ah_{111}\frac{b}{|\b|^2}\sum_{\a=2}^n\f\frac{\dot{F}^{\a\a}}{\k_1}-\frac{\s_{k-1}(\k|1,\a)}{\k_1}\r\dot{F}^{\a\a} \\
   \geq & 2ab\frac{h_{111}}{\k_1}-2 \frac{|abh_{111}|}{\k_1} \sum_{\a=2}^n\frac{|\s_{k-1}(\k|1,\a)|\dot{F}^{\a\a}}{|\b|^2}\\
   \geq&\f2a^2-\frac{C(a^2+1)}{N_2^2}\r\frac{\dot{F}^{11}h_{111}^2}{\k_1}-C(a^2+1)\frac{\dot{F}^{11}h_{111}^2}{\k_1^2} \\
   \geq&2a^2\frac{\dot{F}^{11}h_{111}^2}{\k_1}-\frac{C(a^2+1)}{N_2^2}\frac{\dot{F}^{11}h_{111}^2}{\k_1}.
\end{aligned} 
\end{equation}
Here we use Proposition \ref{betasigmak-1kappa1} and \eqref{b,est} in the last inequality of \eqref{f1pfp}. 

Using Proposition \ref{betasigmak-1kappa1} and \eqref{kifii,u} we get for any $2\leq i\leq \ell$
\begin{align}
    \left|\frac{b\dot{F}^{ii}}{|\b|^2}\right|=&\frac{\f a-(1+a)\frac{kF}{\k_1\dot{F}^{11}}\r\dot{F}^{11}\dot{F}^{ii}h_{111}}{|\b|^2}\leq\frac{C(|a|+1)\dot{F}^{11}\dot{F}^{ii}|h_{111}|}{\s_{k-1}^2}\nonumber\\
    \leq & \frac{C(A)(|a|+1)|h_{111}|}{\k_1^2\k_i^2}\leq \frac{C(A)(|a|+1)\k_i|h_{111}|}{\k_1N_2^6}\leq \frac{(|a|+1)\k_i|h_{111}|}{\k_1}
\end{align}
by the assumption in this case that $\k_i>N_2^2>1$.
From $\xi_i=(1+a)\frac{\k_ih_{111}}{\k_1}+\frac{b\dot{F}^{ii}}{|\b|^2}+\lambda_i$ and \eqref{k1f1}, we assume there exists some constant $C>8$, such that for $i=2,\cdots, l$
\begin{align}\label{lambdai}
    |\lambda_i|\leq C(|a|+1) \frac{\k_i|h_{111}|}{\k_1}.
\end{align}
Otherwise, we have
\begin{equation}
    |\xi_i|\geq \frac{|\lambda_i|}{2}\geq \frac{C}{2}(|a|+1) \frac{\k_i|h_{111}|}{\k_1}.
\end{equation}
Hence \eqref{hess6} becomes
 \begin{align}
    -\ddot{F}^{pp,qq}\xi_p\xi_q+\f 2-\frac{C(A)}{\k_1}\r\sum_{p\neq 1}\frac{F^{pp}\xi_p^2}{\k_1}\geq& \frac{3}{2}\sum_{2\leq i\leq l}\frac{F^{ii}\xi_i^2}{\k_1}\geq \frac{3}{8}\sum_{2\leq i\leq n}\frac{F^{ii}\lambda_{i}^2}{\k_1}\geq \frac{\dot{F}^{11}\k_1\lambda_i^2}{8\k_i^2}\label{711}\nonumber\\
    \geq& \frac{C^2(a^2+1)\dot{F}^{11}h_{111}^2}{32\k_1}
    \geq 2(a^2+1)\frac{\dot{F}^{11}h_{111}^2}{\k_1}
\end{align}  
where we use \eqref{kifii} and \eqref{kifii,u} in the  inequality in the third inequality of \eqref{ki2fii}. This implies the \textbf{main claim}.
Hence we assume \eqref{lambdai} holds below. 
By \eqref{eq11pp}, $III$ of \eqref{hess6} becomes

\begin{equation}
 \begin{aligned}
\sum_{\gamma=2}^n\ddot{F}^{11\gamma\gamma}\lambda_{\gamma}=&\sum_{\gamma=2}^n\frac{-\s_{k}(\k|\gamma)+\s_{k}(\k|1 \gamma)}{\k_1^2}\lambda_{\gamma}
=\sum_{\gamma=2}^n\frac{\dot{F}^{\gamma\gamma}\k_\gamma-\s_k+\s_{k}(\k|1 \gamma)}{\k_1^2}\lambda_{\gamma}\\
\geq& \sum_{i=2}^l\f\frac{\dot{F}^{ii}\k_i}{\k_1^2}\lambda_{i}-\frac{\s_k+|\s_{k}(\k|1 i)|}{\k_1^2}|\lambda_i|\r+\sum_{p=l+1}^n\f\frac{\dot{F}^{pp}\k_p}{\k_1^2}\lambda_{p}-\frac{\s_k+|\s_{k}(\k|1 p)|}{\k_1^2}|\lambda_p|\r.
\end{aligned}   
\end{equation}
For $2\leq i\leq l$ we have by Proposition \ref{relationbetweens_k+1ands_k-1} and \eqref{kifii} 
\begin{equation}\label{sk,1i}
   |\s_k(\k|1i)|\leq C(n,k)\frac{\k_1\cdots\k_k^3}{\k_1\k_i}\leq -C(n,k,A,a_1)\frac{\s_{k+1}}{\k_1\k_i}\leq C\frac{\dot{F}^{11}\k_1}{\k_i}.
\end{equation}
For $l+1\leq p\leq n$, similar to the proof of \eqref{sk,1i} we have 
\begin{equation}\label{sk,1p}
   |\s_k(\k|1p)|\leq C(n,k)\frac{\k_1\cdots\k_k^2}{\k_1}\leq -C(n,k,A,a_1)\frac{\s_{k+1}}{\k_1}\leq C\dot{F}^{11}\k_1.
\end{equation}
Besides, from Proposition \ref{relationbetweens_k+1ands_k-1}, \eqref{sk-1,n2} and \eqref{kifii} by choosing $N_2$ large depending on $\epsilon(N_2)$ there exists $c(N,a_1)>0$ such that  
\begin{equation}\label{sk,2}
    \begin{aligned}
        \frac{\dot{F}^{11}}{\k_1\k_i}\geq -\frac{1}{2}\frac{\s_{k+1}}{\k_1^3\k_i}\geq \frac{C\s_{k-1}}{\k_1^3\k_i}>\frac{C\k_1\k_2\cdots\k_{k-1}}{\k_1^3\k_i}\geq \frac{C(\d_0)}{\k_1^2}\geq c\frac{\s_k}{\k_1^2}
    \end{aligned}
\end{equation}
for any $2\leq i\leq l$.
Furthermore, by \eqref{ki2fii} we have 
\begin{align}
    \dot{F}^{ii}\k_i^2\leq -\f1+\epsilon(N_2)\r\s_{k+1}
\end{align}
and 
\begin{equation}
    |\dot{F}^{ii}\k_i^2-\dot{F}^{11}\k_1^2|\leq 2\epsilon(N_2)\s_{k+1}.
\end{equation}
Besides, by \eqref{sk} and \eqref{kifi} 
\begin{equation}\label{fpkp}
    |\dot{F}^{pp}\k_p|\leq |\s_{k+1}|+|\s_{k+1}(\k|p)|\leq C\k_1\cdots\k_{k-1}\k_k^2\leq C(A)\s_{k-1}\leq -C(A,a_1)\s_{k+1}\leq -2C\dot{F}^{11}\k_1^2.
\end{equation}
By \eqref{kifi}, \eqref{lambdai} and \eqref{fpkp}, we have 
\begin{equation}\label{b4}
 \begin{aligned}
2ah_{111}\sum_{\gamma=2}^n\ddot{F}^{11\gamma\gamma}\lambda_{\gamma}\geq&2ah_{111}\sum_{i=2}^l\f\frac{\dot{F}^{ii}\k_i}{\k_1^2}\lambda_{i}-c\frac{\dot{F}^{11}}{\k_1\k_i}|\lambda_i|\r+2ah_{111}\sum_{p=l+1}^n\f\frac{\dot{F}^{pp}\k_p}{\k_1^2}\lambda_{p}-c\frac{\dot{F}^{11}}{\k_1}|\lambda_p|\r\\
\geq &2ah_{111}\sum_{i=2}^l\frac{\dot{F}^{11}}{\k_i}\lambda_{i}-8\epsilon(N_2)\frac{\dot{F}^{11}|ah_{111}\lambda_{i}|}{\k_i}\\
&-2c|ah_{111}|\sum_{i=2}^l \frac{\dot{F}^{11}}{\k_1\k_i}|\lambda_i|-2c'|ah_{111}|\sum_{p=l+1}^n\dot{F}^{11}|\lambda_p|\\
\geq&2a\sum_{i=2}^l\frac{\dot{F}^{11}h_{111}}{\k_i}\lambda_{i}-\frac{C}{{N_2}^2}(a^2+1)\frac{\dot{F}^{11}h_{111}^2}{\k_1}-C(a^2+1)\frac{\dot{F}^{11}h_{111}^2}{\k_1^2}\\
&-\epsilon\sum_{p=l+1}^n\dot{F}^{11}\k_1^2\lambda_p^2-\frac{Ca^2}{\epsilon}\frac{\dot{F}^{11}h_{111}^2}{\k_1^2}
\end{aligned}   
\end{equation}
where we use Cauchy-Schwartz inequality in the last inequality.

By Lemma \ref{lem,HG} and \eqref{kifii,u}, $VI$ of \eqref{hess6} becomes
\begin{align}
     -\sum_{p,q\geq2}\lambda_{p}\ddot{F}^{pp,qq}\lambda_{q}\geq C\frac{\s_{k-1}}{N_2^2}|\lambda^l|^2\geq \frac{C}{N_2^2}\dot{F}^{11}\k_1^2|\lambda^l|^2\label{71}
\end{align}

For $V$ of \eqref{hess6}, we have
\begin{equation}\label{b22}
\begin{aligned}
    -2\frac{b}{|\b|^2}\sum_{p,\gamma\geq2}\dot{F}^{pp}\ddot{F}^{pp\gamma\gamma}\lambda_{\gamma}= 
    &-2\frac{b}{|\b|^2}\sum_{l+1\leq p\leq n,2\leq i\leq l}\dot{F}^{pp}\f\frac{\dot{F}^{pp}}{\k_i}-\frac{\s_{k-1}(\k|ip)}{\k_i}\r\lambda_i\\
    &-2\frac{b}{|\b|^2}\sum_{2\leq\gamma\leq n,l+1\leq q\leq n}\dot{F}^{\gamma\gamma}\ddot{F}^{\gamma\gamma,qq}\lambda_q
   -2\frac{b}{|\b|^2}\sum_{2\leq j\neq i\leq l}\dot{F}^{jj}\ddot{F}^{jj,ii}\lambda_i.
\end{aligned}    
\end{equation}
Since by Proposition \ref{betasigmak-1kappa1} for any $2\leq\gamma\leq n$, $l+1\leq q\leq n$
\begin{equation}
    \dot{F}^{\gamma\gamma}\leq c\s_{k-1}\leq c'|\b|
\end{equation}
and 
\begin{equation}\label{6,223}
    \dot{F}^{\gamma\gamma,pp}\leq c\s_{k-1}\leq c'|\b|,
\end{equation}
we have by \eqref{b,est}
\begin{equation}\label{6,22}
\begin{aligned}
        -2\frac{b}{|\b|^2}\sum_{2\leq\gamma\leq n,l+1\leq q\leq n}\dot{F}^{\gamma\gamma}\ddot{F}^{\gamma\gamma,qq}\lambda_q\geq&-2\sum_{l+1\leq q\leq n}C(|a|+1) \dot{F}^{11}|h_{111}\lambda_p|\\
    \geq&-\epsilon\sum_{p=l+1}^n\dot{F}^{11}\k_1^2\lambda_p^2-\frac{C^2(a^2+1)}{\epsilon}\frac{\dot{F}^{11}h_{111}^2}{\k_1^2}.
\end{aligned}
\end{equation}
Since by Proposition \ref{betasigmak-1kappa1} and \eqref{kifii,u} for any $2\leq j\neq i\leq l$
\begin{equation}\label{fii,n2}
    \dot{F}^{jj}\leq C\frac{\s_{k-1}}{\k_j^2}\leq C'\frac{|\b|}{\k_j^2}
\end{equation}
and 
\begin{equation}
    \ddot{F}^{jj,ii}\leq C(n,k)\frac{\k_1\cdots\k_k^3}{\k_i\k_j} \leq C'(n,k,A)\frac{\s_{k-1}}{\k_i\k_j}\leq C''(n,k,A) \frac{|\b|}{\k_i\k_j}
\end{equation}
we have by \eqref{b,est} and \eqref{lambdai}
\begin{equation}\label{6,21}
    -2\frac{b}{|\b|^2}\sum_{2\leq j\neq i\leq l}\dot{F}^{jj}\ddot{F}^{jj,ii}\lambda_i\geq -\sum_{2\leq j\neq i\leq l}C(|a|+1)\frac{\dot{F}^{11}|h_{111}\lambda_i|}{\k_i\k_j^3}\geq-C'(a^2+1)\frac{\dot{F}^{11}h_{111}^2}{\k_1N_2^6}. 
\end{equation}
Besides, since by \eqref{fii,n2} for any $l+1\leq p\leq n,2\leq i\leq l$
\begin{align}
  \f1-\frac{C}{N_2^4}\r|\b|^2\leq \sum_{l+1\leq p\leq n}(\dot{F}^{pp})^2=|\b|^2- \sum_{2\leq i\leq l}(\dot{F}^{ii})^2\leq\f1+\frac{C}{N_2^4}\r|\b|^2
\end{align} 
and by Lemma \ref{betasigmak-1kappa1}
\begin{equation}
    |\s_{k-1}(\k|ip)|\leq C(n,k)\frac{\k_1\cdots\k_k}{\k_i}\leq C'(n,k,A)\frac{\s_{k-1}}{\k_i}\leq C''(n,k,A)\frac{|\b|}{\k_i},
\end{equation}
using \eqref{b,est} we have
\begin{equation}\label{6,24}
    \begin{aligned}
    &-2\frac{b}{|\b|^2}\sum_{l+1\leq p\leq n,2\leq i\leq l}\dot{F}^{pp}\f\frac{\dot{F}^{pp}}{\k_i}-\frac{\s_{k-1}(\k|ip)}{\k_i}\r\lambda_i\\
    \geq &-2\frac{b}{|\b|^2}\sum_{l+1\leq p\leq n}(\dot{F}^{pp})^2\sum_{2\leq i\leq l}\frac{\lambda_i}{\k_i}-2C(n,k,A)(|a|+1)\frac{\dot{F}^{11}|h_{111}\lambda_i|}{\k_i^2}\\
      \geq &-2 a\frac{\dot{F}^{11}h_{111}\lambda_i}{\k_i}-\frac{C'(|a|+1)}{N_2^2}\sum_{2\leq i\leq l}\frac{\dot{F}^{11}|\lambda_ih_{111}|}{\k_i}-2C(n,k,A)(|a|+1)\frac{\dot{F}^{11}|h_{111}\lambda_i|}{\k_i^2}\\
      \geq &-2 a\frac{\dot{F}^{11}h_{111}\lambda_i}{\k_i}-\frac{C''(a^2+1)}{N_2^2}\frac{\dot{F}^{11}h_{111}^2}{\k_1}.
\end{aligned}
\end{equation}

Thus $V$ of \eqref{hess6} becomes
\begin{equation}\label{6,2}
    \begin{aligned}
        -2\frac{b}{|\b|^2}\sum_{p,\gamma\geq2}\dot{F}^{pp}\ddot{F}^{pp\gamma\gamma}\lambda_{\gamma}\geq  
    &-2 a\frac{\dot{F}^{11}h_{111}\lambda_i}{\k_i}-\epsilon\sum_{p=l+1}^n\dot{F}^{11}\k_1^2\lambda_p^2\\
    &-\frac{C(a^2+1)}{N_2^2}\frac{\dot{F}^{11}h_{111}^2}{\k_1}-\frac{C^2(a^2+1)}{\epsilon}\frac{\dot{F}^{11}h_{111}^2}{\k_1^2}.
    \end{aligned}
\end{equation}

Then we consider $VII$ of \eqref{hess6} 
\begin{align}\label{7,2}
    2\f1-\frac{c}{\k_1}\r\sum_{\gamma\geq 2}\frac{F^{\gamma\gamma}\xi_{\gamma}^2}{\k_1}=2\f1-\frac{c}{\k_1}\r\sum_{2\leq i\leq l}\frac{F^{ii}\xi_i^2}{\k_1}+2\f1-\frac{c}{\k_1}\r\sum_{l+1\leq p\leq n}\frac{F^{pp}\xi_p^2}{\k_1}
\end{align}

Meanwhile, by \eqref{b4} and \eqref{6,2} we have 
\begin{align}\label{46}
2ah_{111}\sum_{\gamma=2}^n\ddot{F}^{11\gamma\gamma}\lambda_{\gamma}-&2\frac{b}{|\b|^2}\sum_{p,\gamma\geq2}\dot{F}^{pp}\ddot{F}^{pp\gamma\gamma}\lambda_{\gamma}\nonumber\\
\geq&
-2\epsilon\sum_{p=l+1}^n\dot{F}^{11}\k_1^2\lambda_p^2-\frac{C(a^2+1)}{N_2^2}\frac{\dot{F}^{11}h_{111}^2}{\k_1}-\frac{C^2(a^2+1)}{\epsilon}\frac{\dot{F}^{11}h_{111}^2}{\k_1^2}.
\end{align}
Combining \eqref{a1}, \eqref{d}, \eqref{f1pfp}, \eqref{71}, \eqref{7,2} and \eqref{46}, \eqref{hess6} becomes
\begin{equation}\label{hess7}
  \begin{aligned}
      -\ddot{F}^{pp,qq}\xi_p\xi_q+&\f 2-\frac{C(A)}{\k_1}\r\sum_{p\neq 1}\frac{F^{pp}\xi_p^2}{\k_1}\\
      \geq&
(1+a)^2\frac{k(k-1)Fh_{111}^2}{\k_1^2}+2a^2\frac{\dot{F}^{11}h_{111}^2}{\k_1}-\frac{C(a^2+1)}{N_2^2}\frac{\dot{F}^{11}h_{111}^2}{\k_1}- C(a^2+1)\frac{\dot{F}^{11}h_{111}^2}{\k_1^2}\\
&-2\epsilon\sum_{p=l+1}^n\dot{F}^{11}\k_1^2\lambda_p^2+\frac{C_7}{N_2^2}\sum_{p=l+1}^n\dot{F}^{11}\k_1^2\lambda_p^2-\frac{C(a^2+1)}{\epsilon}\frac{\dot{F}^{11}h_{111}^2}{\k_1^2}+2\f1-\frac{c}{\k_1}\r\sum_{p=l+1}^n\frac{F^{pp}\xi_p^2}{\k_1}\\
\geq
&2a^2\frac{\dot{F}^{11}h_{111}^2}{\k_1}-\frac{C(a^2+1)}{N_2^2}\frac{\dot{F}^{11}h_{111}^2}{\k_1}+\frac{C_7}{2N_2^2}\sum_{p=l+1}^n\dot{F}^{11}\k_1^2\lambda_p^2+C'N_2^2(a^2+1)\frac{\dot{F}^{11}h_{111}^2}{\k_1^2}\\
&+2\f1-\frac{c}{\k_1}\r\sum_{l+1\leq p\leq n}\frac{F^{pp}\xi_p^2}{\k_1}
  \end{aligned}  
\end{equation}
by choosing $\epsilon=\frac{C_7}{4N_2^2}$. 
For the same reason in Assumption 1 of case 2(b) above, we claim that there exists some $C>0$ large depending on $n,k,A$ such that
\begin{align}\label{lambdap}
    |(\lambda_{l+1},\dots,\lambda_n)|\leq \frac{CN_2(|a|+1)|h_{111}|}{\k_1^{\frac{3}{2}}}.
\end{align}
Or by \eqref{71}
\begin{align}
     -\sum_{p,q\geq2}\lambda_{p}\ddot{F}^{pp,qq}\lambda_{q}
     \geq& C'C^2(a^2+1)\frac{\dot{F}^{11}h_{111}^2}{\k_1}\geq 2(a^2+1)\frac{\dot{F}^{11}h_{111}^2}{\k_1}.\label{72}
\end{align}
Recall that 
\begin{equation}
    \xi_p^2=\f(1+a)\frac{\k_ph_{111}}{\k_1^2}+\frac{b\dot{F}^{pp}}{|\b|^2}+\lambda_p\r^2.
\end{equation}
Using Proposition \ref{betasigmak-1kappa1}, Proposition \ref{f11est} and \eqref{lambdap}, similar to \eqref{xi,1} we have 
\begin{equation}\label{xi,2}
    \begin{aligned}
        \xi_p^2\geq (1+a)^2\frac{\k_p^2h_{111}^2}{\k_1^2}-C(a^2+1)\frac{N_2^2h_{111}^2}{\k_1^{\frac{5}{2}}}
    \end{aligned}
\end{equation}
and similar to \eqref{7,1} the last term in \eqref{hess7} becomes
\begin{align}\label{73}
    2\f1-\frac{C}{\k_1}\r\sum_{l+1\leq p\leq n}\frac{F^{pp}\xi_p^2}{\k_1}=&2\f1-\frac{C}{\k_1}\r\sum_{l+1\leq p\leq n}\frac{F^{pp}}{\k_1}\f(1+a)\frac{\k_ph_{111}}{\k_1^2}+\frac{b\dot{F}^{pp}}{|\b|^2}+\lambda_p\r^2\nonumber\\
    \geq &2\f1-\frac{C}{\k_1}\r(1+a)^2\sum_{l+1\leq p\leq n}\frac{F^{pp}\k_p^2h_{111}^2}{\k_1^3}-CN_2^2(a^2+1)\frac{\dot{F}^{11}h_{111}^2}{\k_1^{\frac{3}{2}}}.
\end{align}
 By using \eqref{sk}, \eqref{sk+1,i}, \eqref{sk,n2} and \eqref{k1f1}, we calculate the first term \eqref{73} as
 \begin{equation}\label{74}
  \begin{aligned}
  \sum_{l+1\leq p\leq n}F^{pp}\k_p^2=&\s_1\s_k-(k+1)\s_{k+1}-\sum_{1\leq i\leq l}F^{ii}\k_i^2\\
=&\sum_{p=l+1}^n\k_p\s_k+\sum_{i=1}^l\k_i\f\s_k(\k|i)+\k_i\s_{k-1}(\k|i)\r-(k+1-l)\s_{k+1}\\
  &-\sum_{i=1}^l\f\s_{k+1}(\k|i)+\k_i\s_k(\k|i)\r-\sum_{1\leq i\leq l}F^{ii}\k_i^2\\
=&\sum_{p=l+1}^n\k_p\s_k-(k+1-l)\s_{k+1}-\sum_{i=1}^l\s_{k+1}(\k|i)\\
  \geq& \f1-\frac{C}{N_2^2}\r(k+1-l)\dot{F}^{11}\k_1^2.
\end{aligned}   
 \end{equation}

Plugging \eqref{74} into \eqref{hess7}, we have 
\begin{equation}
  \begin{aligned}
   -\ddot{F}^{pp,qq}\xi_p\xi_q+&\f 2-\frac{C(A)}{\k_1}\r\sum_{p\neq 1}\frac{F^{pp}\xi_p^2}{\k_1}\\
   \geq&\f2a^2+2(k+1-l)(1+a)^2-\frac{C(a^2+1)}{N_2^2}\r\frac{\dot{F}^{11}h_{111}^2}{\k_1} -CN_2^2(a^2+1)\frac{\dot{F}^{11}h_{111}^2}{\k_1^{\frac{3}{2}}}\\
   \geq&\f2a^2+2(k+1-l)(1+a)^2-\frac{C_6(a^2+1)}{N_2^2}\r\frac{\dot{F}^{11}h_{111}^2}{\k_1}\\
   \geq&\f\frac{1}{2}a^2+\frac{6}{5}-\frac{C_6(a^2+1)}{N_2^2}\r\frac{\dot{F}^{11}h_{111}^2}{\k_1}\\
>&\frac{\dot{F}^{11}h_{111}^2}{\k_1}
\end{aligned}  
\end{equation}
by choosing $\k_1\geq N_2^9$ in the second inequality and $N_2>(5C_6)^{\frac{1}{2}}$ in the last inequality.

\subsection{Assumption 3:}
We consider the last case that $l=k-1$ and $2\leq k\leq n-2$. The main difference between the last two cases is $VI$ of \eqref{hess6}.

By using Lemma \ref{HS1} and Lemma \ref{f11est} we have
\begin{equation}
    \begin{aligned}\label{7,k-1}
      -\sum_{\gamma,\g\geq2}\lambda_{\gamma}\ddot{F}^{\gamma\gamma\g\g}\lambda_{\g}\geq C\k_1\cdots\k_{k-2}\left|[\lambda]_{1,\dots,k-2}^{\perp}\right|^2\geq C\k_1\k_2\cdots\k_{k-2}|\lambda^l|^2\geq C_7\dot{F}^{11}\k_1\sum_{p=k}^n\lambda_p^2.
    \end{aligned}
\end{equation}

Note that according to the assumption in this case,  for each $k\leq p\leq n$
\begin{align}\label{fpp,k-1}
   \f1-\frac{C(A)}{N_2^2}\r\k_1\cdots\k_{k-1} \leq\dot{F}^{pp}\leq \f1+\frac{C(A)}{N_2^2}\r\k_1\cdots\k_{k-1}.
\end{align} 

Recall \eqref{b4}
\begin{equation}
 \begin{aligned}
\sum_{i=2}^{k-1}\ddot{F}^{11ii}\lambda_{i}+\sum_{p=k}^n\ddot{F}^{11pp}\lambda_{p}\geq\sum_{i=2}^l\f\frac{\dot{F}^{ii}\k_i}{\k_1^2}\lambda_{i}-c\frac{\dot{F}^{11}}{\k_1\k_i}|\lambda_i|\r+\sum_{p=l+1}^n\f\frac{\dot{F}^{pp}\k_p}{\k_1^2}\lambda_{p}-c\frac{\dot{F}^{11}}{\k_1}|\lambda_p|\r.
\end{aligned}   
\end{equation}

Then by \eqref{perp}, \eqref{kifii} and \eqref{fpp,k-1},
\begin{align}\label{lp,k-1}
 2\left|\sum_{k\leq p\leq n} \frac{\dot{F}^{pp}}{\k_1^2}ah_{111}\k_p\lambda_{p}\right|\leq&\frac{C}{N_2^2}\frac{\k_1\cdots\k_{k-1}|ah_{111}\lambda_p|}{\k_1^2}\leq \frac{C'}{N_2^2}\dot{F}^{11}|ah_{111}\lambda_p|\nonumber\\
 \leq &\frac{C}{N_2^2}\dot{F}^{11}\k_1 \sum_{k\leq p\leq n}\lambda_p^2+\frac{Ca^2}{ N_2^2}\frac{\dot{F}^{11}h_{111}^2}{\k_1}.
\end{align}
Similar to \eqref{b4}, we have from \eqref{b1} and \eqref{lp,k-1}
\begin{equation}\label{b4,2}
 \begin{aligned}
2ah_{111}\sum_{\gamma=2}^n\ddot{F}^{11\gamma\gamma}\lambda_{\gamma}
\geq &2ah_{111}\sum_{i=2}^l\frac{\dot{F}^{11}}{\k_i}\lambda_{i}-8\epsilon(N_2)\frac{\dot{F}^{11}|ah_{111}\lambda_{i}|}{\k_i}-2c\sum_{i=2}^l \frac{\dot{F}^{11}}{\k_1\k_i}|ah_{111}\lambda_i|\\
&-\frac{C}{N_2^2}\dot{F}^{11}\k_1 \sum_{k\leq p\leq n}\lambda_p^2-\frac{Ca^2}{ N_2^2}\frac{\dot{F}^{11}h_{111}^2}{\k_1}-2c\frac{\dot{F}^{11}|ah_{111}\lambda_p|}{\k_1}\\
\geq&2a\sum_{i=2}^l\frac{\dot{F}^{11}h_{111}}{\k_i}\lambda_{i}-\frac{C'(a^2+1)}{N_2^2}\frac{\dot{F}^{11}h_{111}^2}{\k_1}-\frac{C+c}{N_2^2}\dot{F}^{11}\k_1 \sum_{k\leq p\leq n}\lambda_p^2\\
&-ca^2 N_2^2\frac{\dot{F}^{11}h_{111}^2}{\k_1^3}-\frac{Ca^2}{ N_2^2}\frac{\dot{F}^{11}h_{111}^2}{\k_1}.
\end{aligned}   
\end{equation}
where we use \eqref{lambdai} and Cauchy-Schwartz inequality in the last inequality.

Recall \eqref{b22}

\begin{equation}
\begin{aligned}
    -2\frac{b}{|\b|^2}\sum_{p,\gamma\geq2}\dot{F}^{pp}\ddot{F}^{pp\gamma\gamma}\lambda_{\gamma}= 
    &-2\frac{b}{|\b|^2}\sum_{l+1\leq p\leq n,2\leq i\leq l}\dot{F}^{pp}\f\frac{\dot{F}^{pp}}{\k_i}-\frac{\s_{k-1}(\k|ip)}{\k_i}\r\lambda_i\\
    & -2\frac{b}{|\b|^2}\sum_{2\leq j\neq i\leq l}\dot{F}^{jj}\ddot{F}^{jj,ii}\lambda_i-2\frac{b}{|\b|^2}\sum_{2\leq\gamma\leq n,l+1\leq q\leq n}\dot{F}^{\gamma\gamma}\ddot{F}^{\gamma\gamma,qq}\lambda_q.
\end{aligned}    
\end{equation}
Similar to \eqref{6,21}, we have from \eqref{lambdai}
\begin{equation}\label{6,21,2}
    -2\frac{b}{|\b|^2}\sum_{2\leq j\neq i\leq l}\dot{F}^{jj}\ddot{F}^{jj,ii}\lambda_i\geq -2\sum_{2\leq j\neq i\leq l}C(|a|+1)\frac{\dot{F}^{11}|h_{111}\lambda_i|}{\k_i\k_j^3}\geq-C'(a^2+1)\frac{\dot{F}^{11}h_{111}^2}{\k_1\k_j^3}. 
\end{equation}
Different from \eqref{b22}, we have by \eqref{betalowerbound1}
\begin{equation}
    \dot{F}^{\gamma\gamma,pp}\leq C(n,k)\k_1\cdots\k_{k-2}\leq \frac{C'}{\k_{k-1}}|\b|\leq\frac{C'}{N_2^2}|\b|.
\end{equation}
Hence similar to \eqref{6,22}, by \eqref{b22} we have 
 \begin{equation}\label{6,22,2}
 \begin{aligned}
      -2\frac{b}{|\b|^2}\sum_{2\leq\gamma\leq n,k\leq q\leq n}\dot{F}^{\gamma\gamma}\ddot{F}^{\gamma\gamma,qq}\lambda_q\geq& -2\sum_{l+1\leq q\leq n}\frac{C}{N_2^2}(|a|+1) \dot{F}^{11}|h_{111}\lambda_p|\\
    \geq &-\frac{C}{N_2^2}\sum_{p=l+1}^n\dot{F}^{11}\k_1\lambda_p^2-\frac{C(a^2+1)}{N_2^2}\frac{\dot{F}^{11}h_{111}^2}{\k_1}.   
 \end{aligned}
\end{equation}
Combining \eqref{6,24}, \eqref{6,21,2} and \eqref{6,22,2}, we have
\begin{equation}\label{6,24,2}
\begin{aligned}
    -2\frac{b}{|\b|^2}\sum_{p,\gamma\geq2}\dot{F}^{pp}\ddot{F}^{pp\gamma\gamma}\lambda_{\gamma}
    \geq &-2 a\frac{\dot{F}^{11}h_{111}\lambda_i}{\k_i}-\frac{C}{N_2^2}\sum_{p=l+1}^n\dot{F}^{11}\k_1\lambda_p^2-\frac{C(a^2+1)}{N_2^2}\frac{\dot{F}^{11}h_{111}^2}{\k_1}.
\end{aligned}    
\end{equation}
By \eqref{b4,2} and \eqref{6,24,2}, similar to \eqref{46} we obtain 
\begin{align}\label{46,k-1}
&2ah_{111}\sum_{\gamma=2}^n\ddot{F}^{11\gamma\gamma}\lambda_{\gamma}-2\frac{b}{|\b|^2}\sum_{p,\gamma\geq2}\dot{F}^{pp}\ddot{F}^{pp\gamma\gamma}\lambda_{\gamma}\nonumber\\
\geq& -\frac{C}{N_2^2}\sum_{p=l+1}^n\dot{F}^{11}\k_1\lambda_p^2-\frac{C(a^2+1)}{N_2^2}\frac{\dot{F}^{11}h_{111}^2}{\k_1}.
\end{align}

For $3\leq k\leq n-2$, combining \eqref{a1}, \eqref{d}, \eqref{711}, \eqref{f1pfp}, \eqref{7,k-1} and \eqref{46,k-1}, similar to \eqref{hess7}, \eqref{hess6} becomes
\begin{equation}\label{hess,k-1}
  \begin{aligned}
      -\ddot{F}^{pp,qq}\xi_p\xi_q+&\f 2-\frac{C(A)}{\k_1}\r\sum_{p\neq 1}\frac{F^{pp}\xi_p^2}{\k_1}\\
      \geq
&(1+a)^2\frac{k(k-1)Fh_{111}^2}{\k_1^2}- C(a^2+1)\frac{\dot{F}^{11}h_{111}^2}{\k_1^2}+2a^2\frac{\dot{F}^{11}h_{111}^2}{\k_1}-\frac{C(a^2+1)}{N_2^2}\frac{\dot{F}^{11}h_{111}^2}{\k_1}\\
&-\frac{C_8}{N_2^2}\sum_{p=k}^n\dot{F}^{11}\k_1\lambda_p^2+C_7\dot{F}^{11}\k_1\sum_{p=k}^n\lambda_p^2+2\f1-\frac{c}{\k_1}\r\sum_{p=k}^n\frac{F^{pp}\xi_p^2}{\k_1}\\
\geq
&2a^2\frac{\dot{F}^{11}h_{111}^2}{\k_1}-\frac{C'(a^2+1)}{N_2^2}\frac{\dot{F}^{11}h_{111}^2}{\k_1}+\frac{C_7}{2}\dot{F}^{11}\k_1^2\sum_{p=k}^n\lambda_p^2+2\f1-\frac{c}{\k_1}\r\sum_{k\leq p\leq n}\frac{F^{pp}\xi_p^2}{\k_1}
  \end{aligned}  
\end{equation}
by choosing $N_2>\sqrt{\frac{2C_8}{C_7}}$. \eqref{hess,k-1} implies that there exists some constant $C>0$ such that
\begin{align}\label{lamp,k-1}
    |\lambda_p|\leq \frac{C(|a|+1)|h_{111}|}{\k_1} 
\end{align}

Now we consider the eighth term in \eqref{hess6}. Using \eqref{perp} and \eqref{fpp,k-1}, we obtain similar to \eqref{xi,2}

\begin{align}\label{8,k-1}
 2\sum_{k\leq p\leq n}F^{pp}\xi_p^2=&2\sum_{k\leq p\leq n}F^{pp}\f(1+a)\frac{\k_ph_{111}}{\k_1}+\frac{b\dot{F}^{pp}}{|\b|^2}+\lambda_p\r^2\nonumber\\
 \geq &2(a+1)^2\sum_{k\leq p\leq n}\frac{\dot{F}^{pp}\k_p^2h_{111}^2}{\k_1^2}-C(a^2+1)\frac{\dot{F}^{11}h_{111}^2}{\k_1^2}\\
 &+4(1+a)\sum_{k\leq p\leq n}\frac{(\s_{k-1}-\k_p\s_{k-2}(\k|p))\k_ph_{111}\lambda_p}{\k_1}\nonumber\\
 \geq &2(a+1)^2\frac{\dot{F}^{pp}\k_p^2h_{111}^2}{\k_1^2}-\frac{2C}{N_2^2}(|a|+1)\dot{F}^{11}\k_1|\lambda_p h_{111}|-C(a^2+1)\frac{\dot{F}^{11}h_{111}^2}{\k_1^2}\nonumber\\
 \geq &2(a+1)^2\frac{\dot{F}^{pp}\k_p^2h_{111}^2}{\k_1^2}-\frac{C_9}{N_2^2}\dot{F}^{11}\k_1^2\lambda_p^2-\frac{2C}{N_2^2}(a^2+1)\dot{F}^{11}h_{111}^2
\end{align}
where we use \eqref{perp} in the second inequality.
Hence combining \eqref{74} and \eqref{lamp,k-1}, $VII$  of \eqref{hess6} becomes
\begin{equation}\label{hess9}
   \begin{aligned}
    2\f 1-\frac{c}{\k_1}\r\sum_{k\leq p\leq n}\frac{F^{pp}\xi_p^2}{\k_1}
    \geq&2\f1-\frac{c}{\k_1}\r(a+1)^2\sum_{k\leq p\leq n}\frac{\dot{F}^{pp}\k_p^2h_{111}^2}{\k_1^3}-\frac{C_9}{N_2^2}\dot{F}^{11}\k_1\sum_{k\leq p\leq n}\lambda_p^2\\
    &-\frac{2C}{N_2^2}(a^2+1)\frac{\dot{F}^{11}h_{111}^2}{\k_1}\\
\geq&4(1+a)^2\frac{\dot{F}^{11}h_{111}^2}{\k_1}-\frac{C_9}{N_2^2}\dot{F}^{11}\k_1\sum_{k\leq p\leq n}\lambda_p^2-\frac{C'(a^2+1)}{N_2^2}\frac{\dot{F}^{11}h_{111}^2}{\k_1}
\end{aligned} 
\end{equation}
Plugging \eqref{hess9} into \eqref{hess,k-1}, we obtain
\begin{equation}
   \begin{aligned}
  -\ddot{F}^{pp,qq}\xi_p\xi_q+\f 2-\frac{C(A)}{\k_1}\r\sum_{p\neq 1}\frac{F^{pp}\xi_p^2}{\k_1}\geq&
(2a^2+4(1+a)^2-\frac{C_{10}(a^2+1)}{N_2^2})\frac{\dot{F}^{11}h_{111}^2}{\k_1}\\
\geq &\f \frac{3}{2}a^2+4(1+a)^2-\frac{C_{10}}{N_2^2}+\f\frac{1}{2}-\frac{C_{10}}{N_2^2}\r a^2\r\frac{\dot{F}^{11}h_{111}^2}{\k_1}\\
\geq& \f\frac{12}{11}--\frac{C_{10}}{N_2^2}+\f\frac{1}{2}-\frac{C_{10}}{N_2^2}\r a^2\r\frac{\dot{F}^{11}h_{111}^2}{\k_1}\\
>&\frac{\dot{F}^{11}h_{111}^2}{\k_1}
\end{aligned} 
\end{equation}
by choosing $N_2>\max\lbrace\sqrt{\frac{4C_9}{C_7}},\sqrt{11C_{10}}\rbrace$.
For $k=2$, combining \eqref{a1},\eqref{d}, \eqref{a2}, \eqref{7,k-1} \eqref{b4,2}, and \eqref{6,22,2}, \eqref{hess6} implies that
\begin{equation}\label{hess,k-1,2}
  \begin{aligned}
      -\ddot{F}^{pp,qq}\xi_p\xi_q+&\f 2-\frac{C(A)}{\k_1}\r\sum_{p\neq 1}\frac{F^{pp}\xi_p^2}{\k_1}\\
      \geq
&(1+a)^2\frac{k(k-1)Fh_{111}^2}{\k_1^2}- C(a^2+1)\frac{\dot{F}^{11}h_{111}^2}{\k_1^2}+2a\f a-(1+a)\frac{kF}{\k_1\dot{F}^{11}}\r\frac{\dot{F}^{11}h_{111}^2}{\k_1}\\
&-\frac{C(a^2+1)}{N_2^2}\frac{\dot{F}^{11}h_{111}^2}{\k_1}
-\frac{C_8}{N_2^2}\sum_{p=k}^n\dot{F}^{11}\k_1\lambda_p^2+C_7\dot{F}^{11}\k_1\sum_{p=k}^n\lambda_p^2+2\f1-\frac{c}{\k_1}\r\sum_{p=k}^n\frac{F^{pp}\xi_p^2}{\k_1}\\
\geq
&\f2a^2+\f(1+a)^2k(k-1)-2a(1+a)k\r\frac{F}{\k_1\dot{F}^{11}}\r\frac{\dot{F}^{11}h_{111}^2}{\k_1}\\
&-\frac{C_9(a^2+1)}{N_2^2}\frac{\dot{F}^{11}h_{111}^2}{\k_1}+\frac{C_7}{2}\dot{F}^{11}\k_1^2\sum_{p=k}^n\lambda_p^2+2\f1-\frac{c}{\k_1}\r\sum_{k\leq p\leq n}\frac{F^{pp}\xi_p^2}{\k_1}
  \end{aligned}  
\end{equation}
by choosing $N_2>\sqrt{\frac{2C_8}{C_7}}$.
Using Proposition \ref{sigma_kkappa1negative}, we obtain that either
\begin{align*}
     \textbf{C}:=2a^2+\f(1+a)^2k(k-1)-2a(1+a)k\r\frac{F}{\k_1\dot{F}^{11}}\geq 2a^2
\end{align*}
if the coefficient of $F/\k_1\dot{F}_{11}$ is non-negative or
\begin{equation}\label{coe,2}
   2a^2+(2-2a^2)(1-\frac{\gamma}{2})= 2+(a^2-1)\gamma 
\end{equation}
 and $a^2>1$ meanwhile. 
     
If the coefficient of $F/\k_1\dot{F}_{11}$ is negative, \eqref{hess,k-1,2} becomes 
      \begin{equation}
          \begin{aligned}
        -\ddot{F}^{pp,qq}\xi_p\xi_q+\f 2-\frac{C(A)}{\k_1}\r\sum_{p\neq 1}\frac{F^{pp}\xi_p^2}{\k_1}
        \geq &\f2+(a^2-1)\gamma-2C_{10}N_2^{-2}-C_{10}(a^2-1)N_2^{-2}\r\frac{\dot{F}^{11}h_{111}^2}{\k_1}\\
 \geq &\frac{\dot{F}^{11}h_{111}^2}{\k_1}.
      \end{aligned}   
      \end{equation}
      by assuming $N_2\geq \max\lbrace \sqrt{\frac{C_9}{\gamma}}, \sqrt{2C_{10}}\rbrace $,
      which implies the \textbf{main claim}.
      
Using \eqref{sigmakkappapsqure} and \eqref{8,k-1}, we have
      \begin{align}\label{hess10}
 2\sum_{k\leq p\leq n}F^{pp}\xi_p^2
 \geq &2(a+1)^2\frac{\dot{F}^{pp}\k_p^2h_{111}^2}{\k_1^2}-\frac{C_{11}}{N_2^2}\dot{F}^{11}\k_1^2\lambda_p^2-\frac{2C}{N_2^2}(a^2+1)\dot{F}^{11}h_{111}\nonumber\\
 \geq &4(a+1)^2\frac{\f \k_1(\k_1\dot{F}^{11}-\s_k)\r h_{111}^2}{\k_1^2}-\frac{C_{11}}{N_2^2}\dot{F}^{11}\k_1^2\lambda_p^2\\
 &-\frac{C'}{N_2^2}(a^2+1)\dot{F}^{11}h_{111}^2.
\end{align}
Then by choosing $N_2\geq \sqrt{\frac{2C_11}{C_7}}$, \eqref{hess,k-1,2} becomes 
 \begin{align}
      -\ddot{F}^{pp,qq}\xi_p\xi_q&+2\f1-\frac{c}{\k_1}\r\sum_{p\neq 1}\frac{F^{pp}\xi_p^2}{\k_1}\nonumber\\
\geq&\f2a^2+4(1+a)^2+\f2(1+a)^2-4a(1+a)-4(1+a)^2\r\frac{F}{\k_1\dot{F}^{11}}\r\frac{\dot{F}^{11}h_{111}^2}{\k_1}\nonumber\\
      &-\frac{C_{12}}{N_2^2}(a^2+1)\frac{\dot{F}^{11}h_{111}^2}{\k_1}.
      \end{align}

    For $k=2$, if
      \[2(1+a)^2-4a(1+a)-4(1+a)^2<0,\] then $a<-1$ or $a>-\frac{1}{3}$ and 
      \eqref{hess10} implies
      \begin{align}
      -\ddot{F}^{pp,qq}\xi_p\xi_q&+2\f1-\frac{c}{\k_1}\r\sum_{p\neq 1}\frac{F^{pp}\xi_p^2}{\k_1}\nonumber\\
\geq&\f2a^2+4(1+a)^2+\f2(1+a)^2-4a(1+a)-4(1+a)^2\r(1-\frac{\gamma}{2})\r\frac{\dot{F}^{11}h_{111}^2}{\k_1}\nonumber\\
      &-\frac{C_{12}}{N_2^2}(a^2+1)\frac{\dot{F}^{11}h_{111}^2}{\k_1}\nonumber\\
      \geq&\f (3a^2+4a+1)\gamma +2\r\frac{\dot{F}^{11}h_{111}^2}{\k_1}--\frac{C_{12}}{N_2^2}(a^2+1)\frac{\dot{F}^{11}h_{111}^2}{\k_1}\nonumber\\
      \geq& \f a^2\gamma'+2(a+1)^2\gamma'+2(1-\frac{\gamma'}{2})\r\frac{\dot{F}^{11}h_{111}^2}{\k_1}-\frac{C_{12}}{N_2^2}(a^2+1)\frac{\dot{F}^{11}h_{111}^2}{\k_1}\\
      > &\frac{\dot{F}^{11}h_{111}^2}{\k_1}.
      \end{align}
 Here we denote by $\gamma'=\min\left\lbrace \frac{1}{2},\gamma\right\rbrace$ and choose $\k_1\geq \max\left\lbrace N_2^{4k},\f\frac{C_7}{\gamma'}\r^{2k}\right\rbrace$.  
If  \[2(1+a)^2-4a(1+a)-4(1+a)^2\geq0,\] 
\eqref{hess10} implies   
 \begin{align}
      -\ddot{F}^{pp,qq}\xi_p\xi_q&+2\f1-\frac{c}{\k_1}\r\sum_{p\neq 1}\frac{F^{pp}\xi_p^2}{\k_1}\nonumber\\
    \geq &\f2a^2+4(1+a)^2\r\frac{\dot{F}^{11}h_{111}^2}{\k_1}-\frac{C_{12}}{N_2^2}(a^2+1)\frac{\dot{F}^{11}h_{111}^2}{\k_1}\nonumber\\
    \geq &\f \frac{3}{2}a^2+4(1+a)^2-\frac{C_{12}}{N_2^2}+\f\frac{1}{2}-\frac{C_{12}}{N_2^2}\r a^2\r\frac{\dot{F}^{11}h_{111}^2}{\k_1}\\
    > &\frac{\dot{F}^{11}h_{111}^2}{\k_1}
    \end{align}
     by choosing $N_2\geq  \sqrt{11C_7}$. 
     
Combining all the cases under \textbf{Assumption 2} and Proposition \ref{l,N}, we obtain there exists some positive constant $C$ and $c$ only depending on $n,k,A,\s_k,a_1, \d_0$ or equivalently $n,k,\s_k,A$ such that the \textbf{main claim} holds as long as $N_2>\max\lbrace C,1\rbrace$ and $\k_1>CN_2^{2^{k-1}}$. Thus we complete the proof of Theorem \ref{main}.

\begin{bibdiv}
\begin{biblist}
\bibliographystyle{amsplain}

\bib{Ben}{article}{
    AUTHOR = {Andrews, Ben},
     TITLE = {Pinching estimates and motion of hypersurfaces by curvature
              functions},
   JOURNAL = {J. Reine Angew. Math.},
    VOLUME = {608},
      YEAR = {2007},
     PAGES = {17--33},
     }
\bib{chu}{article}{
    AUTHOR = {Chu, Jianchun},
     TITLE = {A simple proof of curvature estimate for convex solution of
              {$k$}-{H}essian equation},
   JOURNAL = {Proc. Amer. Math. Soc.},
    VOLUME = {149},
      YEAR = {2021},
    NUMBER = {8},
     PAGES = {3541--3552},
}

\bib{GS1}{article}{
    AUTHOR = {Guan, Bo},
    AUTHOR = {Spruck, Joel},
     TITLE = {Hypersurfaces of constant mean curvature in hyperbolic space
              with prescribed asymptotic boundary at infinity},
   JOURNAL = {Amer. J. Math.},
    VOLUME = {122},
      YEAR = {2000},
    NUMBER = {5},
     PAGES = {1039--1060},
     }

\bib{GS2}{article}{
    AUTHOR = {Guan, Bo},
    AUTHOR = {Spruck, Joel},
     TITLE = {Hypersurfaces of constant curvature in hyperbolic space. {II}},
   JOURNAL = {J. Eur. Math. Soc. (JEMS)},
    VOLUME = {12},
      YEAR = {2010},
    NUMBER = {3},
     PAGES = {797--817},
     }

\bib{guansurvey}{incollection}{
   AUTHOR = {Guan, Bo}, AUTHOR = {Spruck, Joel},
     TITLE = {Convex hypersurfaces of constant curvature in hyperbolic
              space},
 BOOKTITLE = {Surveys in geometric analysis and relativity},
    SERIES = {Adv. Lect. Math. (ALM)},
    VOLUME = {20},
     PAGES = {241--257},
 PUBLISHER = {Int. Press, Somerville, MA},
      YEAR = {2011},
}

\bib{guan2009jga}{article}{
   AUTHOR = {Guan, Bo}, AUTHOR = {Spruck, Joel},
   AUTHOR = {Szapiel, Marek},
     TITLE = {Hypersurfaces of constant curvature in hyperbolic space. {I}},
   JOURNAL = {J. Geom. Anal.},
    VOLUME = {19},
      YEAR = {2009},
    NUMBER = {4},
     PAGES = {772--795},
      ISSN = {1050-6926},
}
\bib{xiaoling2014jdg}{article}{
    AUTHOR = {Guan, Bo}, AUTHOR = {Spruck, Joel}, 
    AUTHOR = {Xiao, Ling},
     TITLE = {Interior curvature estimates and the asymptotic plateau
              problem in hyperbolic space},
   JOURNAL = {J. Differential Geom.},
    VOLUME = {96},
      YEAR = {2014},
    NUMBER = {2},
     PAGES = {201--222},
      ISSN = {0022-040X},
}

   \bib{guannote}{book}{
 AUTHOR = {Pengfei Guan},
     TITLE = {Curvature measures, isoperimetric type inequalities and fully nonlinear PDES, Fully Nonlinear PDES in Real and Complex Geometry and Optics. Lecture Notes in Mathematics.},
   JOURNAL = {Springer},
    VOLUME = {2087},
      YEAR = {Springer (2013)},
    NUMBER = {},
     PAGES = {47-94},
     }

\bib{HS}{article}{
    AUTHOR = {Huisken, Gerhard},
    AUTHOR = {Sinestrari, Carlo},
     TITLE = {Convexity estimates for mean curvature flow and singularities
              of mean convex surfaces},
   JOURNAL = {Acta Math.},
    VOLUME = {183},
      YEAR = {1999},
    NUMBER = {1},
     PAGES = {45--70},   
 }

\bib{Lu1}{article}{
    AUTHOR = {Lu, Siyuan},
     TITLE = {On the asymptotic {P}lateau problem in hyperbolic space},
   JOURNAL = {Proc. Amer. Math. Soc.},
    VOLUME = {151},
      YEAR = {2023},
    NUMBER = {12},
     PAGES = {5443--5451},
     }

\bib{Lu2}{article}{
    AUTHOR = {Lu, Siyuan},
     TITLE = {Curvature estimates for semi-convex solutions of {H}essian
              equations in hyperbolic space},
   JOURNAL = {Calc. Var. Partial Differential Equations},
    VOLUME = {62},
      YEAR = {2023},
    NUMBER = {9},
     PAGES = {Paper No. 257, 23},
     }

\bib{RW}{article}{
    AUTHOR = {Ren, Changyu},
    AUTHOR = {Wang, Zhizhang},
     TITLE = {The global curvature estimate for the {$n-2$} {H}essian
              equation},
   JOURNAL = {Calc. Var. Partial Differential Equations},
    VOLUME = {62},
      YEAR = {2023},
    NUMBER = {9},
     PAGES = {Paper No. 239, 50},
}

\bib{spruckgausscurvature}{article}{
    AUTHOR = {Rosenberg, Harold}, 
   AUTHOR = {Spruck, Joel},
     TITLE = {On the existence of convex hypersurfaces of constant {G}auss curvature in hyperbolic space},
   JOURNAL = {J. Differential Geom.},
    VOLUME = {40},
      YEAR = {1994},
    NUMBER = {2},
     PAGES = {379--409},
}

\bib{Wang}{article}{
      title={Hypersurfaces of constant scalar curvature in hyperbolic space with prescribed asymptotic boundary at infinity}, 
      author={Bin Wang},
      eprint={arXiv:2408.07656},
}

\bib{Wang09}{article}{
 author={Wang, Xu-Jia},
 title={The $k$-Hessian equation},
 conference={
 	title={Geometric analysis and PDEs},
 },
 book={
 	series={Lecture Notes in Math.},
 	volume={1977},
 	publisher={Springer, Dordrecht},
 },
 date={2009},
 pages={177--252},
}

\end{biblist}
\end{bibdiv}
\end{document}